\DeclareMathOperator\divergence{div}
\DeclareMathOperator\jac{Jac}
\newcommand{\R}{\mathbb{R}}
\numberwithin{equation}{section}
\theoremstyle{plain}
\newtheorem{theorem}{Theorem}[section]
\theoremstyle{plain}
\newtheorem{prop}[theorem]{Proposition}
\theoremstyle{plain}
\newtheorem{lemma}[theorem]{Lemma}
\theoremstyle{plain}
\newtheorem{cor}[theorem]{Corollary}
\theoremstyle{definition}
\theoremstyle{definition}
\newtheorem{definition}[theorem]{Definition}
\theoremstyle{definition}
\newtheorem{remark}[theorem]{Remark}
\theoremstyle{definition}
\theoremstyle{plain}
\begin{document}

\addtolength\jot{3pt}

\renewcommand{\labelenumi}{\textit{(\roman{enumi})}}

\makeatletter
\def\author@andify{%
  \nxandlist {\unskip ,\penalty-1 \space\ignorespaces}%
    {\unskip {} \@@and~}%
    {\unskip , \penalty-2}%
}
\makeatother

\title[Energy stability for a class of semilinear elliptic problems]{Energy stability for a class of semilinear elliptic problems}


\thanks{Research partially supported by Gruppo Nazionale per l'Analisi Matematica, la Pro\-ba\-bi\-li\-t\`a e le loro Applicazioni (GNAMPA) of the Istituto Nazionale di Alta Matematica (INdAM)}

\author{Danilo Gregorin Afonso}

\address[Danilo Gregorin Afonso]{Dipartimento di Matematica Guido Castelnuovo, Sapienza Universit\`a di Roma, Italy} \email{danilo.gregorinafonso@uniroma1.it}

\author{Alessandro Iacopetti}
\address[Alessandro Iacopetti]{Dipartimento di Matematica ``G. Peano", Universit\`a di Torino, Via Carlo Alberto 10, 10123 Torino, Italy}
\email{alessandro.iacopetti@unito.it}

\author{Filomena Pacella}
\address[Filomena Pacella]{Dipartimento di Matematica Guido Castelnuovo, Sapienza Universit\`a di Roma, Piazzale Aldo Moro 5, 00185 Roma, Italy}
\email{pacella@mat.uniroma1.it}

%
%

\subjclass[2010]{35J61, 35B35, 35B38, 49Q10}
\keywords{semilinear elliptic equations, variational methods, stability, shape optimization in unbounded domains}

\maketitle


\begin{abstract}
    In this paper, we consider semilinear elliptic problems in a bounded domain $\Omega$ contained in a given unbounded Lipschitz domain $\mathcal C \subset \mathbb R^N$. Our aim is to study how the energy of a solution behaves with respect to volume-preserving variations of the domain $\Omega$ inside $\mathcal C$. Once a rigorous variational approach to this question is set, we focus on the cases when $\mathcal C$ is a cone or a cylinder and we consider spherical sectors and radial solutions or bounded cylinders and special one-dimensional solutions, respectively. In these cases, we show both stability and instability results, which have connections with related overdetermined problems.
\end{abstract}

\section{Introduction}

\label{sec:introduction}

Let $\mathcal C \subset \mathbb R^N$, $N \geq 2$, be an unbounded uniformly Lipschitz domain and let $\Omega \subset \mathcal C$ be a bounded Lipschitz domain with smooth relative boundary $\Gamma_\Omega \coloneqq \partial \Omega \cap \mathcal C$. More precisely, we assume that $\Gamma_\Omega$ is a smooth manifold of dimension $N-1$ with smooth boundary $\partial\Gamma_\Omega$. We set $\Gamma_{1, \Omega} \coloneqq \partial \Omega \setminus \overline \Gamma_\Omega$ and assume that $\mathcal H^{N - 1}(\Gamma_{1, \Omega}) > 0$, where $\mathcal H^{N - 1}$ denotes the $(N - 1)$-dimensional Hausdorff measure. Hence $\partial \Omega = \Gamma_\Omega \cup \Gamma_{1, \Omega} \cup \partial \Gamma_\Omega$.

We consider the following semilinear elliptic problem:
\begin{equation}
\label{eq:pde}
\left\{
\begin{array}{rcll}
- \Delta u & = & f(u) & \quad \text{ in } \Omega \\[4pt]
u & = & 0 & \quad \text{ on } \Gamma_\Omega \\[2pt]
\displaystyle \frac{\partial u}{\partial \nu} & = & 0 & \quad \text{ on } \Gamma_{1, \Omega}
\end{array}
\right.
\end{equation}
where $f: \mathbb R \to \mathbb R$ is a locally $C^{1,\alpha}$ nonlinearity and $\nu$ denotes the exterior unit normal vector to $\partial \Omega$.

Let $u_\Omega$ be a positive weak solution of \eqref{eq:pde} in the Sobolev space $H_0^1(\Omega \cup \Gamma_{1, \Omega})$, which is the space of functions in $H^1(\Omega)$ whose trace vanishes on $\Gamma_\Omega$. By standard variational methods we have that under suitable hypotheses on $f$ such a solution exists and is a critical point of the energy functional
\begin{equation}
\label{eq:energy_functional}
J(v) = \frac{1}{2} \int_\Omega |\nabla v|^2 \ dx - \int_\Omega F(v) \ dx, \quad v \in H_0^1(\Omega \cup \Gamma_{1, \Omega}),
\end{equation}
where $F(s) = \int_0^s f(\tau) \ d\tau$.

A classical example of  a nonlinearity for which a positive solution exists for any domain $\Omega$ in $\mathcal C$ is the Lane-Emden nonlinearity, namely 
\begin{equation}\label{eq:LaneEmden}
f(u) = u^p,\ \text{with} \begin{cases} 1 < p < \frac{N + 2}{N - 2} & \text{if $N \geq 3$},\\[2pt] 1 < p < + \infty &\text{if $N = 2$.}\end{cases}
\end{equation}
 In this case, $u_\Omega$ can be obtained, for instance, by minimizing the functional $J$ on the Nehari manifold
\begin{equation*}
\label{eq:Nehari_manifold}
\mathcal N(\Omega) = \{v \in H_0^1(\Omega \cup \Gamma_{1, \Omega}) \setminus\{0\} \ : \  J'(v)[v] = 0\} .
\end{equation*}

Given the unbounded region $\mathcal C$, an interesting question is to understand how the energy $J(u_\Omega)$ behaves with respect to variations of a domain $\Omega$ inside $\mathcal C$. In particular, one could ask whether the energy $J(u_\Omega)$ increases or decreases by deforming $\Omega$ into a domain $\widetilde \Omega$ sufficiently close to $\Omega$ and with the same measure. 

Loosely speaking, one could consider the function $\Omega\mapsto T(\Omega) = J(u_\Omega)$ and study it in a suitable ``neighborhood" of $\Omega$. Under this aspect, domains $\Omega$ which are local minima of $T$ could be particularly interesting. This question could be attacked by differentiating $T(\Omega)$ with respect to variations of $\Omega$ which leave the volume invariant and studying the stability or instability of its critical points. However, since \eqref{eq:pde} is a nonlinear problem and solutions of \eqref{eq:pde} are not unique in general, it is not clear a priori how to well define the functional $T(\Omega)$.

We will show in Section \ref{sec:general_unbounded_domains} that for nondegenerate solutions $u_\Omega$ of \eqref{eq:pde} the energy functional $T(\Omega)$ is well defined for domains obtained by small deformations of $\Omega$ induced by vector fields which leave $\mathcal C$ invariant. 

We remark that the study of the stationary domains of the energy functional $T(\Omega)$ with a volume constraint is strictly related to the overdetermined problem obtained from \eqref{eq:pde} by adding the condition that the normal derivative $\frac{\partial u}{\partial \nu}$ is constant on $\Gamma_\Omega$, see Proposition \ref{prop:energy_stationary_char}. This is well-known for a Dirichlet problem in $\mathbb R^N$ and when $T(\Omega)$ is globally defined for all domains $\Omega \subset \mathbb R^N$ (as in the case of the torsion problem, i.e. $f \equiv 1$). It has been observed in \cite{PacellaTralli2021} and \cite{IacopettiPacellaWeth2022} in the relative setting of the cone.

The existence or not of domains that are local minimizers of the energy and their shapes obviously depend on the unbounded region $\mathcal C$ where the domains $\Omega$ are contained. In this paper, we consider unbounded cones and cylinders, in which there are some particular domains that, for symmetry or other geometric reasons, could be natural candidates for being local minimizers of the energy.\\

Let us first describe the case when $\mathcal C$ is a cone $\Sigma_D$ defined as
\begin{equation}
\Sigma_D \coloneqq \{x \in \mathbb R^N \ : \ x = tq, \ q \in D, \ t > 0\},
\end{equation}
where $D$ is a smooth domain on the unit sphere ${\mathbb{S}^{N - 1}}$.

In $\Sigma_D$ we consider the spherical sector $\Omega_D$ obtained by intersecting the cone with the unit ball centered at the origin, i.e. $\Omega_D=\Sigma_D\cap B_1$. In $\Omega_D$ we can consider a radially symmetric solution $u_D$ of problem \eqref{eq:pde}, for the nonlinearities $f$ for which they exist. Obviously, $u_D$ is a radial solution of the analogous Dirichlet problem in the unit ball $B_1$.

In Section \ref{sec:cone} we show that, whenever $u_D$ is a nondegenerate solution of \eqref{eq:pde}, then the pair $(\Omega_D, u_D)$ is energy-stationary in the sense of Definition \ref{def:energy_stationary} and investigate its ``stability" as a critical point of the energy functional $T$, which is well defined for small perturbations of $\Omega_D$ (see Sections \ref{sec:general_unbounded_domains} and \ref{sec:cone}).

The main result we get is that the stability of $(\Omega_D, u_D)$ depends on the first nontrivial Neumann eigenvalue $\lambda_1(D)$ of the Laplace-Beltrami operator $- \Delta_{\mathbb{S}^{N - 1}}$ on the domain $D \subset \mathbb{S}^{N - 1}$ which spans the cone. In particular, we obtain a precise threshold for stability/instability which is independent of the nonlinearity, and on the radial positive solution considered, whenever multiple radial positive solutions exist. Let us remark that for several nonlinearities the radial positive solution is unique (see \cite{NiNussbaum1985}). For example, this is the case if $f(u) = u^p$, $p > 1$.

To state precisely our result we need to introduce the first eigenvalue $\widehat \nu_1$ of the following singular eigenvalue problem:
\begin{equation}
\label{eq:singular_problem_intro}
\begin{cases}
- z'' - \frac{N - 1}{r}z' - f'(u_D)z = \frac{\widehat \nu}{r^2} z \quad \text{in } (0, 1) \\
z(1) = 0
\end{cases}
\end{equation}
This problem arises naturally when studying the spectrum of the linearized operator $- \Delta - f'(u_D)$. We refer to Section \ref{sec:cone} for more details.

\begin{theorem}
\label{thm:stability_cone}
Let $\Sigma_D$ be the cone spanned by the smooth domain $D \subset \mathbb{S}^{N - 1}$, $N \geq 3$, and let $\lambda_1(D)$ be the first nonzero Neumann eigenvalue of the Laplace-Beltrami operator $- \Delta_{\mathbb{S}^{N - 1}}$ on $D$. Let $u_D$ be a radial positive solution of \eqref{eq:pde} in the spherical sector $\Omega_D$. We have:
\begin{enumerate}
\item if $- \widehat \nu_1 < \lambda_1(D) < N - 1$, then the pair $(\Omega_D, u_D)$ is an unstable energy-stationary pair;

\item if $\lambda_1(D) > N - 1$, then $(\Omega_D, u_D)$ is a stable energy-stationary pair.
\end{enumerate}
\end{theorem}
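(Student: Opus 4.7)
The plan is to compute the second variation of the constrained energy $T$ at the stationary pair $(\Omega_D, u_D)$, reduce it via spectral decomposition on $D$ to a family of scalar ODE problems, and show that the mode-$k$ contribution changes sign across the threshold $\lambda_k = N-1$. Write $u_D(x) = U(|x|)$ for the radial profile.

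First, since $u_D$ is radial, the outward normal on $\Gamma_{\Omega_D}$ is the radial direction, the mean curvature of $\Gamma_{\Omega_D}$ equals $N-1$, and $\partial_\nu u_D = U'(1)$ is constant on $\Gamma_{\Omega_D}$; hence $(\Omega_D, u_D)$ satisfies the overdetermined condition of Proposition \ref{prop:energy_stationary_char} and is energy-stationary. The stability is then governed by the second-variation quadratic form $Q(\varphi)$ from Section \ref{sec:general_unbounded_domains}, acting on normal perturbations $\varphi$ of $\Gamma_{\Omega_D} \cong D$ subject to $\int_D \varphi\, d\sigma = 0$ (volume preservation) and a Neumann-type condition on $\partial D$ inherited from cone invariance. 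Expand $\varphi = \sum_{k\geq 1} c_k \phi_k$ in the Neumann eigenfunctions $\phi_k$ of $-\Delta_{\mathbb S^{N-1}}$ on $D$, with eigenvalues $0 = \lambda_0 < \lambda_1(D) \leq \lambda_2 \leq \cdots$; the $\lambda_0$-mode is killed by the volume constraint. The linearized shape-derivative function $w$, which solves $(-\Delta - f'(u_D))w = 0$ in $\Omega_D$ with appropriate boundary data, separates as $w(r,\theta) = \sum_k z_k(r)\phi_k(\theta)$, and each $z_k$ solves the ODE in \eqref{eq:singular_problem_intro} with $\widehat\nu$ replaced by $\lambda_k$, regular at $r=0$ and with $z_k(1) \propto U'(1)\, c_k$. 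This diagonalises $Q(\varphi) = \sum_{k\geq 1} c_k^2\, M(\lambda_k)$ for a scalar function $M(\lambda)$ built out of $z_\lambda'(1)/z_\lambda(1)$ together with the curvature contribution coming from the second variation formula.

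The core of the argument is the identification of the threshold $\lambda = N-1$. Differentiating the radial equation $U'' + \tfrac{N-1}{r} U' + f(U) = 0$ shows that $V(r) := U'(r)$ is a nontrivial solution of the ODE in \eqref{eq:singular_problem_intro} precisely at $\widehat\nu = N-1$, and it is automatically regular at $r = 0$ since $V(0) = 0$ by radiality. Evaluating the quadratic form along this mode yields $M(N-1) = 0$ --- the threshold --- which reflects heuristically the translation invariance of radial solutions in $\mathbb R^N$, since translations generate exactly the spherical harmonics of eigenvalue $N-1$. A Sturm--Picone / Riccati argument applied to the one-parameter family of ODEs in $\lambda$ then establishes strict monotonicity of $M(\lambda)$ between the singular eigenvalues $\widehat\nu_j$, giving $M(\lambda) > 0$ for $\lambda > N - 1$ and $M(\lambda) < 0$ for $-\widehat\nu_1 < \lambda < N-1$. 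The theorem follows at once: if $\lambda_1(D) > N - 1$ then every $M(\lambda_k) > 0$ and $Q > 0$ on any nonzero admissible $\varphi$ (stability); if $-\widehat\nu_1 < \lambda_1(D) < N - 1$, taking $\varphi = \phi_1$ gives $Q(\varphi) = c_1^2\, M(\lambda_1(D)) < 0$ (instability).

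The main obstacle is the monotonicity of $M(\lambda)$ in the last step: the ODE in \eqref{eq:singular_problem_intro} is singular at $r = 0$ with an indicial exponent that depends on $\lambda$, so one must control the regular solution uniformly in $\lambda$ and rule out resonances with the higher $\widehat\nu_j$ as $\lambda$ sweeps across $[\lambda_1(D), +\infty)$. A secondary technical point is matching the boundary condition on $\partial D$ produced by the general cone-preserving shape-derivative formalism of Section \ref{sec:general_unbounded_domains} with the Neumann condition used in the spectral decomposition on $D$ here.
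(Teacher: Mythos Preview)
Your outline is essentially the paper's argument. Lemma \ref{lem:I_two_primes_zero} makes the second variation explicit as $I''(0)[\psi_j,\psi_j] = -u_D'(1)\bigl(h_j'(1)+u_D''(1)\bigr)$, and the core computation is obtained by cross-multiplying the Sturm--Liouville forms of the equations for $h_1$ and for $u_D'$ (the latter solving the ODE \eqref{eq:singular_problem_intro} with parameter $N-1$, exactly as you observe) and integrating by parts:
\[
-u_D'(1)\bigl(h_1'(1)+u_D''(1)\bigr) \;=\; (N-1-\lambda_1)\int_0^1 r^{N-3}\, h_1\, u_D'\,dr.
\]
Since $u_D'<0$ on $(0,1)$ and $h_1>0$ by Proposition \ref{prop:h_positive_cone} (this is precisely where the hypothesis $\lambda_1>-\widehat\nu_1$ enters), the sign of $I''(0)[\psi_1,\psi_1]$ is that of $\lambda_1-(N-1)$, giving part (i).

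The ``main obstacle'' you flag is not a real obstacle. There is no need to sweep $\lambda$ continuously or to track resonances with higher $\widehat\nu_j$: the monotonicity $h_k'(1)\geq h_j'(1)$ for $\lambda_k\geq\lambda_j$ follows from the \emph{same} Wronskian identity applied to the pair $(h_j,h_k)$, and requires only that both $h_j,h_k$ are positive, which holds because every $\lambda_j\geq\lambda_1>-\widehat\nu_1$. This yields $I''(0)[v,v]\geq -u_D'(1)\bigl(h_1'(1)+u_D''(1)\bigr)$ for every normalised $v\in T_0M$, and part (ii) reduces to the formula already established. Your secondary worry about matching the boundary condition on $\partial D$ is likewise a non-issue: the radial field $V(x)=v(x/|x|)\,x$ used in the cone setting is tangent to $\partial\Sigma_D$, so the shape derivative $\widetilde u_v$ inherits the homogeneous Neumann condition on $\Gamma_{1,0}$ directly from \eqref{eq:equation_for_u_tilde}, and projecting onto $\psi_j$ over $D$ then uses only the Neumann condition on $\psi_j$.
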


\begin{remark}
    \label{rem:Remark_on_thm_stability_cone}
    The case $N = 2$ is special and in this case, the overdetermined torsion problem has been completely solved in \cite{PacellaTralli2020} using that the boundary of any cone in dimension $2$ is flat. In the nonlinear case, the condition $N \geq 3$ arises from the study of an auxiliary singular problem (see Proposition \ref{prop:h(0)_equals_0}). It is important to observe that the singular eigenvalue $\widehat \nu_1$ which appears in \textit{(i)} is larger than $-(N - 1)$ for all autonomous nonlinearities $f(u)$ (see \cite[Proposition 3.4]{CiraoloPacellaPolvara2023}). Thus the condition $\lambda_1(D) \in (- \widehat \nu_1, N - 1)$ is consistent. 
\end{remark}

Let us comment on the meaning of Theorem \ref{thm:stability_cone}. The statement \textit{(ii)} will be proved by showing that the quadratic form corresponding to the second derivative of the energy functional, with a fixed volume constraint, is positive definite in all directions. This means that the spherical sector locally minimizes the energy among small volume preserving perturbations of $\Omega_D$ and of the corresponding radial solution $u_D$. 

On the contrary, when $- \widehat \nu_1 < \lambda_1(D) < N - 1$, by \textit{(i)} we have that the pair $(\Omega_D, u_D)$ is unstable and therefore $\Omega_D$ is not a local minimizer of the energy. This means that there exist small volume preserving deformations of the spherical sector $\Omega_D$ which produce domains $\Omega_t$ and solutions $u_t$ of \eqref{eq:pde} in $\Omega_t$ whose energy $J(u_t)$ is smaller than the energy $J(u_D)$ of the positive radial solution $u_D$ in the spherical sector $\Omega_D$.

Moreover, observe that the function $f = f(s)$ could satisfy suitable hypotheses such that problem \eqref{eq:pde} has a unique positive solution $u_\Omega$ in any domain $\Omega \subset \Sigma_D$ (or more generally in $\Omega \subset \mathcal C$). This is the case, for example, when $f \equiv 1$, i.e., \eqref{eq:pde} is a ``relative" torsion problem. Then the energy functional $T(\Omega) = J(u_\Omega)$ is well defined for any domain $\Omega \subset \Sigma_D$. Hence we may ask whether a global minimum for $T$ exists, once the volume of $\Omega$ is fixed, and is given by the spherical sector $\Omega_D$. This question has been addressed in \cite{PacellaTralli2020}, \cite{PacellaTralli2021} and \cite{IacopettiPacellaWeth2022} when $f \equiv 1$, showing that $\Omega_D$ is a global minimizer if $\Sigma_D$ is a convex cone (\cite{PacellaTralli2021}), as a consequence of an isoperimetric inequality introduced in \cite{LionsPacella1990}, see also \cite{CabreRosOtonSerra2016, FigalliIndrei2013, RitoreRosales2004}. Instead, in \cite{IacopettiPacellaWeth2022} it is proved that $\Omega_D$ is not a local minimizer whenever $\lambda_1(D) < N - 1$, which is the same threshold we get in Theorem \ref{thm:stability_cone} for general nonlinearities.\\

The other example of an unbounded domain we consider in the present paper is a half-cylinder, defined as
\begin{equation}
\label{eq:cylinder_def}
\Sigma_\omega \coloneqq \omega \times (0, + \infty) \subset \mathbb R^N, 
\end{equation}
where $\omega \subset \mathbb R^{N - 1}$ is a smooth bounded domain. We denote the points in $\Sigma_\omega$ by $x = (x', x_N)$, $x' \in \omega$. In this case, a geometrically simple domain we consider is the bounded cylinder 
\begin{equation}
\label{eq:cylinder_bounded_def}
\Omega_\omega \coloneqq \{(x', x_N) \in \mathbb R^{N - 1} \ : \ x' \in \omega, \ 0 < x_N < 1 \}.
\end{equation}
In $\Omega_\omega$ we consider a positive solution 
\begin{equation}
\label{eq:extension_of_one_dimensional_solution}
u_\omega(x) = u_\omega(x_N)
\end{equation}
which is obtained by trivially extending to $\Omega_\omega$ a positive one-dimensional solution of the problem
\begin{equation}
\label{eq:nonlinear_ode_cylinder_intro}
\begin{cases}
- u'' = f(u) \quad \text{ in } (0, 1) \\
u'(0) = u(1)=0
\end{cases}
\end{equation}
for a nonlinearity $f$ for which such a solution exists.

Before stating the results concerning the stability of the pair $(\Omega_\omega, u_\omega)$ we again consider an auxiliary eigenvalue problem (but not singular):
\begin{equation}
\label{eq:nonsingular_eigenvalue_problem_intro}
\begin{cases}
-z'' - f'(u_\omega) z = \alpha z \quad \text{ in } (0, 1) \\
z'(0) = z(1) = 0
\end{cases}
\end{equation}

The problem \eqref{eq:nonsingular_eigenvalue_problem_intro} is considered in Section \ref{sec:cylinder} to study the spectrum of the linearized operator $- \Delta - f'(u_\omega)$. We denote by $\alpha_1$ the first eigenvalue of \eqref{eq:nonsingular_eigenvalue_problem_intro}.

We start by stating a sharp stability/instability result for the torsion problem, i.e., taking $f \equiv 1$ in \eqref{eq:pde}.

\begin{theorem}
    \label{thm:sharp_stability_for_torsion}
    Let $\Sigma_\omega \subset \mathbb R^N$, $N \geq 2$, and $\Omega_\omega$ be respectively, as in \eqref{eq:cylinder_def} and \eqref{eq:cylinder_bounded_def}, and let $u_\omega$ be the one-dimensional positive solution of \eqref{eq:pde} in $\Omega_\omega$ obtained by \eqref{eq:nonlinear_ode_cylinder_intro} for $f \equiv 1$. Let $\lambda_1(\omega)$ be the first nontrivial Neumann eigenvalue of the Laplace operator $- \Delta_{\mathbb R^{N - 1}}$ in the domain $\omega \subset \mathbb R^{N - 1}$. Then there exists a number $\beta \approx 1,439$ such that
    \begin{enumerate}[label=(\roman*)]
        \item if $\lambda_1(\omega) < \beta$, then the pair $(\Omega_\omega, u_\omega)$ is an unstable energy-stationary pair;

        \item if $\lambda_1(\omega) > \beta$, then the pair $(\Omega_\omega, u_\omega)$ is a stable energy-stationary pair.
    \end{enumerate}
\end{theorem}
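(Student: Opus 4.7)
First I would verify that $(\Omega_\omega, u_\omega)$ is energy-stationary: the explicit solution $u_\omega(x_N)=(1-x_N^2)/2$ of the torsion ODE gives $|\nabla u_\omega|^2 = 1$ constant on the movable Dirichlet face $\omega\times\{1\}$, and $\partial_\nu u_\omega = -1$ there, which is the overdetermined condition characterizing energy-stationarity by Proposition \ref{prop:energy_stationary_char}.

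Then I would specialize the general second variation formula for $T$ at an energy-stationary pair, developed in Section \ref{sec:general_unbounded_domains}, to the half-cylinder. The volume-preserving admissible directions correspond to scalar functions $\varphi$ on $\omega\times\{1\}$ with $\int_\omega \varphi\,dx'=0$. Expanding $\varphi$ in the Neumann eigenbasis of $-\Delta_{\mathbb R^{N-1}}$ on $\omega$, say $\varphi=\sum_{k\geq 1} c_k\phi_k$ with eigenvalues $\lambda_k(\omega)$, the shape derivative $v$ solves $-\Delta v=0$ in $\Omega_\omega$ (since $f'(u_\omega)\equiv 0$ for torsion), $v=\varphi$ on $\omega\times\{1\}$, and $\partial_\nu v=0$ on the Neumann part $\Gamma_{1,\Omega}$. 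Separation of variables then yields
\begin{equation*}
v(x',x_N)=\sum_{k\geq 1} c_k\,\phi_k(x')\,\frac{\cosh(\sqrt{\lambda_k(\omega)}\,x_N)}{\cosh(\sqrt{\lambda_k(\omega)})}.
\end{equation*}
Inserting this into the second variation formula and using that the cap $\omega\times\{1\}$ is flat (vanishing mean curvature) together with $|\nabla u_\omega|=1$ and $\partial_\nu u_\omega=-1$, the associated quadratic form diagonalizes into
\begin{equation*}
Q(\varphi)=\sum_{k\geq 1} c_k^{\,2}\, q(\lambda_k(\omega))
\end{equation*}
for a single real function $q$ that can be read off from the mode-by-mode contributions, involving the boundary values $\sqrt{\lambda_k(\omega)}\tanh(\sqrt{\lambda_k(\omega)})$ coming from $\partial_{x_N}v|_{x_N=1}$.

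The final step is to identify the sign change of $q$. From the structure of the calculation I expect $q(\mu)$ to take the form (up to a positive factor) $q(\mu)=1-\sqrt{\mu}\tanh(\sqrt{\mu})$, which is strictly decreasing in $\mu$ and has a unique positive zero $\beta$ characterized by the transcendental equation $\sqrt{\beta}\tanh(\sqrt{\beta})=1$; a numerical check confirms $\beta\approx 1.439$, since at $\mu=1.439$ one has $\sqrt{\mu}\approx 1.1996$ and $\tanh(1.1996)\approx 0.8336$, whose product is $\approx 1$. Once monotonicity of $q$ is established, item \emph{(i)} follows by taking $\varphi=\phi_1$ as a test direction (admissible, as $\phi_1$ is orthogonal to constants), giving $Q(\phi_1)=q(\lambda_1(\omega))<0$; item \emph{(ii)} follows because $\lambda_k(\omega)\geq\lambda_1(\omega)>\beta$ for all $k\geq 1$, so every mode contributes $q(\lambda_k(\omega))>0$.

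The main obstacle I foresee is obtaining the clean diagonal form for $Q$. The boundary of $\Gamma_\Omega$ contains the corner $\partial\omega\times\{1\}$ where the Dirichlet cap meets the lateral Neumann surface, and the second variation formula in the relative setting (as opposed to the classical one in $\mathbb R^N$) picks up extra edge contributions reflecting the tangential motion of this corner along $\partial\mathcal C$ and the curvature of $\partial\omega$ inside $\mathbb R^{N-1}$. Verifying that these edge terms either vanish identically on the cylinder geometry or combine cleanly with the interior spectral contribution to produce the simple threshold equation for $\beta$ is the technically delicate step; after that, the stability/instability dichotomy is just a monotonicity argument on a single transcendental function of one variable.
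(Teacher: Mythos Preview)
Your approach is essentially the paper's: separate variables in the Neumann eigenbasis, solve the resulting ODE explicitly to get $h_j(x_N)=\cosh(\sqrt{\lambda_j}\,x_N)/\cosh(\sqrt{\lambda_j})$, and read off the single-mode contribution $h_j'(1)-1=\sqrt{\lambda_j}\tanh(\sqrt{\lambda_j})-1$, whose unique zero defines $\beta$. The paper packages the computation through an intermediate quantity $\rho$ (Theorem~\ref{thm:generalinststab}) valid for general $f$, but for $f\equiv 1$ this collapses to exactly your diagonalization.

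Two points. First, there is a sign inconsistency in your write-up: you state $q(\mu)=1-\sqrt{\mu}\tanh(\sqrt{\mu})$ and call it decreasing, but then claim $q(\lambda_1)<0$ when $\lambda_1<\beta$ and $q(\lambda_k)>0$ when $\lambda_k>\beta$, which is backwards. The correct normalization (matching the paper's $\rho$) is $q(\mu)=\sqrt{\mu}\tanh(\sqrt{\mu})-1$, which is \emph{increasing}; with that sign your conclusions for (i) and (ii) are consistent. Second, your anticipated ``main obstacle''---edge contributions at the corner $\partial\omega\times\{1\}$---does not materialize. The paper restricts to vertical graph variations $V(x)=(0',v(x')x_N)$, which are tangent to $\partial\Sigma_\omega$, and derives the second variation (Lemma~\ref{lem:T2_cartesian}) directly from differentiating the first variation formula; for $\varphi\equiv 0$ the last two integrals in \eqref{eq:T2_cartesian} vanish because $\nabla_{\mathbb R^{N-1}}\varphi=0$ and $\nabla u_\omega\perp(\nabla_{\mathbb R^{N-1}}w,0)$, so no curvature-of-$\partial\omega$ or corner term survives and the quadratic form is exactly diagonal.
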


Note that the number $\beta$ that gives the threshold for the stability is independent of the dimension $N$. Its value is obtained by solving numerically the equation $\sqrt{\lambda_1} \tanh(\sqrt{\lambda_1}) - 1 = 0$ (see \eqref{eq:Psiteo12} in the proof of Theorem \ref{thm:sharp_stability_for_torsion}).

It is interesting to observe that the instability result of Theorem \ref{thm:sharp_stability_for_torsion} is related to a bifurcation theorem obtained in \cite{FallMinlendWeth2017}. Indeed, if we consider the cylinder $\Sigma_\omega$ in $\mathbb R^2$, in which case $\omega$ is simply an interval in $\mathbb R$ and $\Omega_\omega$ is a rectangle, a byproduct of Theorem 1.1 of \cite{FallMinlendWeth2017} is the existence of a domain $\widetilde \Omega_\omega$ in $\Sigma_\omega$ that is a small deformation of the rectangle $\Omega_\omega$ and in which the overdetermined problem
\begin{equation*}
    \left\{
    \begin{array}{rcll}
        - \Delta u & = & 1 & \quad \text{ in } \widetilde \Omega_\omega \\
        u & = & 0 & \quad \text{ on } \Gamma_{\widetilde \Omega_\omega} \\
        \frac{\partial u}{\partial \nu} & = & c < 0 & \quad \text{ on } \Gamma_{\widetilde \Omega_\omega} \\ [3pt]
        \frac{\partial u}{\partial \nu} & = & 0 & \quad \text{ on } \Gamma_{1, \widetilde \Omega_\omega}
    \end{array}
    \right.
\end{equation*}
has a solution.

By looking at the proof of \cite{FallMinlendWeth2017} and relating it to our instability result it is clear that the bifurcation should occur when the eigenvalue $\lambda_1(\omega)$ crosses the value $\beta$ provided by Theorem \ref{thm:sharp_stability_for_torsion}.

The proof of Theorem \ref{thm:sharp_stability_for_torsion} can be derived from a general condition for the stability of the pair $(\Omega_\omega, u_\omega)$ in the nonlinear case, which is obtained in Theorem \ref{thm:generalinststab}. The proof of Theorem \ref{thm:generalinststab} involves auxiliary functions that appear naturally in the study of derivatives of the energy functional $T$, see Section \ref{sec:cylinder}.

Let us remark that in the case when $f \equiv 1$ we succeed in obtaining the sharp bound of Theorem \ref{thm:sharp_stability_for_torsion} because the solution given by \eqref{eq:extension_of_one_dimensional_solution} and \eqref{eq:nonlinear_ode_cylinder_intro} is explicit:
$$
u_\omega(x) = u_\omega(x_N) = \frac{1 - x_N^2}{2},
$$
and so are the auxiliary functions which are solutions of simple linear ODEs. This allows us to use the condition of Theorem \ref{thm:generalinststab} to obtain Theorem \ref{thm:sharp_stability_for_torsion}.

The result of Theorem \ref{thm:sharp_stability_for_torsion} gives a striking difference between the torsional energy problem and the isoperimetric problem in cylinders. Indeed, Proposition 2.1 of \cite{AfonsoIacopettiPacella2022} shows that the only stationary cartesian graphs for the perimeter functional are the flat ones. Instead, Theorem \ref{thm:sharp_stability_for_torsion} (as well as the result of \cite{FallMinlendWeth2017}) indicate that there are domains for which the overdetermined problem relative to \eqref{eq:pde}, with $f \equiv 1$, has a solution and whose relative boundary is a non-flat cartesian graph.

For the semilinear problem, we obtain a stability result for a large class of nonlinearities as soon as the eigenvalue $\lambda_1(\omega)$ is sufficiently large. Indeed, we have

\begin{theorem}
    \label{thm:stability_semilinear_cylinder}
    Let $\Sigma_\omega$ and $\Omega_\omega$ be as in \eqref{eq:cylinder_def} and \eqref{eq:cylinder_bounded_def}, and let $u_\omega$ be a positive one-dimensional solution of \eqref{eq:pde} in $\Omega_\omega$. Let $\alpha_1$ be the first eigenvalue of \eqref{eq:nonsingular_eigenvalue_problem_intro} and let $\lambda_1(\omega)$ be as in Theorem \ref{thm:sharp_stability_for_torsion}. If the nonlinearity $f$ satisfies $f(0) = 0$ and
    \begin{equation}
        \label{eq:condition_for_stability_semilinear_cylinder}
        \lambda_1(\omega) > \max\{- \alpha_1, \|f'(u_\omega)\|_\infty\},
    \end{equation}
    then the pair $(\Omega_\omega, u_\omega)$ is a stable energy-stationary pair.
\end{theorem}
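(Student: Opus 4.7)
The plan is to reduce the stability assertion to the general criterion furnished by Theorem \ref{thm:generalinststab}, and then to verify that criterion mode by mode using a Fourier decomposition on the cross-section $\omega$. An admissible volume-preserving normal deformation of the relative boundary $\Gamma_{\Omega_\omega}=\omega\times\{1\}$ is represented by a function $\xi\colon\omega\to\mathbb{R}$ with $\int_\omega\xi\,dx'=0$. Letting $\{\varphi_k\}_{k\geq 0}$ be the $L^2(\omega)$-orthonormal Neumann eigenfunctions of $-\Delta_{\mathbb{R}^{N-1}}$ on $\omega$, associated with $0=\lambda_0(\omega)<\lambda_1(\omega)\leq\lambda_2(\omega)\leq\cdots$, I would expand
\begin{equation*}
\xi=\sum_{k\geq 1}c_k\varphi_k,
\end{equation*}
with the mode $k=0$ killed exactly by the zero-mean condition.

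Because $u_\omega$ depends only on $x_N$, the ambient linearized operator $-\Delta-f'(u_\omega)$ separates variables, so the quadratic form given by Theorem \ref{thm:generalinststab} should decompose as
\begin{equation*}
\partial^2 T(\Omega_\omega)[\xi,\xi]\;=\;\sum_{k\geq 1}c_k^2\,\Psi\bigl(\lambda_k(\omega)\bigr),
\end{equation*}
where $\Psi(\lambda)$ is a scalar quantity built from a one-dimensional auxiliary BVP of the shape
\begin{equation*}
-w''+\bigl(\lambda-f'(u_\omega)\bigr)w\;=\;0\ \text{ in }(0,1),\qquad w'(0)=0,\ \ w(1)=-u_\omega'(1),
\end{equation*}
or a close variant supplied by Theorem \ref{thm:generalinststab}. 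Stability of the pair $(\Omega_\omega,u_\omega)$ is then equivalent to $\Psi(\lambda)>0$ for every $\lambda\geq\lambda_1(\omega)$.

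The two hypotheses in \eqref{eq:condition_for_stability_semilinear_cylinder} are designed to enforce exactly this sign. The bound $\lambda_1(\omega)>\|f'(u_\omega)\|_\infty$ makes the one-dimensional operator $-\partial_N^2+\lambda-f'(u_\omega)$ strictly coercive for every $\lambda\geq\lambda_1(\omega)$, so $w_\lambda$ is unique and a standard maximum principle fixes its sign; together with $u_\omega'(1)<0$ (forced by $f(0)=0$, the positivity of $u_\omega$ and the condition $u_\omega'(0)=0$ in \eqref{eq:nonlinear_ode_cylinder_intro}) this determines the sign of the boundary data and hence of $w_\lambda$ throughout $(0,1)$. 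The bound $\lambda_1(\omega)>-\alpha_1$ rules out resonance with the one-dimensional problem \eqref{eq:nonsingular_eigenvalue_problem_intro} and, via separation of variables, translates into strict positivity of the quadratic form in each mode. Testing the ODE for $w_\lambda$ against $w_\lambda$ itself (or against $u_\omega$) and integrating by parts should yield $\Psi(\lambda)>0$ for every $\lambda\geq\lambda_1(\omega)$, and summing over $k\geq 1$ gives $\partial^2 T(\Omega_\omega)[\xi,\xi]>0$ for every nonzero admissible $\xi$.

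The step I expect to be the main obstacle is the final positivity assertion on $\Psi$. In the torsion case (Theorem \ref{thm:sharp_stability_for_torsion}) the explicit profile $u_\omega(x_N)=(1-x_N^2)/2$ makes $w_\lambda$ and $\Psi$ computable, and the sharp threshold $\beta$ comes out of a transcendental equation. In the general semilinear setting one loses those closed-form expressions and must extract the sign of $\Psi$ purely from the two quantitative bounds in \eqref{eq:condition_for_stability_semilinear_cylinder}; the care lies in packaging the coercivity given by $\lambda_1(\omega)>\|f'(u_\omega)\|_\infty$ and the non-resonance given by $\lambda_1(\omega)>-\alpha_1$ into a single positivity statement for $\Psi(\lambda)$ uniform in $\lambda\geq\lambda_1(\omega)$.
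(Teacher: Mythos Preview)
Your framework is correct and matches the paper's: the Fourier decomposition on $\omega$, the reduction to a one-dimensional auxiliary problem (your $w_\lambda$ is the paper's $h_j$ from \eqref{eq:equation_for_h_cylinder}), and the identification that, once $f(0)=0$, the second variation diagonalises as $I''(0)[w,w]=-u_\omega'(1)\sum_{j\geq 1}(w,\psi_j)^2\,h_j'(1)$, so that stability reduces to $h_j'(1)>0$ for every $j\geq 1$. You also correctly observe that $\lambda_j>-\alpha_1$ delivers $h_j>0$ on $[0,1]$ via the maximum principle (this is Proposition~\ref{prop:h_positive_cylinder}).

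The genuine gap is the last step. You say you would ``test the ODE for $w_\lambda$ against $w_\lambda$ itself (or against $u_\omega$) and integrate by parts'' to extract $\Psi(\lambda)>0$, and you flag this as the main obstacle. Neither of those testings is what closes the argument, and the proof does not proceed by an energy identity at all. The paper's mechanism is a \emph{convexity} argument: from the ODE one reads off
\[
h_1''=\bigl(\lambda_1-f'(u_\omega)\bigr)h_1\quad\text{in }(0,1),
\]
and since $h_1>0$ (from $\lambda_1>-\alpha_1$) and $\lambda_1>\|f'(u_\omega)\|_\infty$, the right-hand side is strictly positive, i.e.\ $h_1''>0$. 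Together with the Neumann condition $h_1'(0)=0$ this forces $h_1'$ to be strictly increasing from $0$, hence $h_1'(1)>0$. The paper then invokes the monotonicity $h_k'(1)\geq h_1'(1)$ established in the proof of Theorem~\ref{thm:generalinststab} to reduce every mode to the first, but your mode-by-mode route works equally well: the same convexity argument applies verbatim to each $h_j$ because $\lambda_j\geq\lambda_1>\|f'(u_\omega)\|_\infty$. So the only missing ingredient in your proposal is this two-line observation about the sign of $h_1''$; once you have it, the proof is complete.
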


The condition \eqref{eq:condition_for_stability_semilinear_cylinder} shows that the stability depends on an interplay between the geometry of the cylinder $\Sigma_\omega$ (through the eigenvalue $\lambda_1(\omega)$) and the nonlinearity $f$. On the contrary, numerical evidence shows, for the Lane-Emden nonlinearity \eqref{eq:LaneEmden}, that, if $\lambda_1$ is sufficiently close to $-\alpha_1$, instability occurs, see Remark \ref{rem:remark_on_numerics}.

Concerning the eigenvalue $\alpha_1$ in the bound \eqref{eq:condition_for_stability_semilinear_cylinder}, as well as the analogous one, $\lambda_1(D) > - \widehat \nu_1$, of Theorem \ref{thm:stability_cone}, we point out that they are used in the proofs of both theorems to deduce the positivity of some auxiliary functions. It is an open problem to understand if they really play a role in the stability/instability result.

We delay further comments on the results and their proofs to the respective sections.

The paper is organized as follows. In Section \ref{sec:general_unbounded_domains} we study problem \eqref{eq:pde} in domains $\Omega$ contained in a general unbounded set $\mathcal C$. We define the energy functional and its derivative with respect to variations of $\Omega$ which leave $\mathcal C$ invariant and preserve the measure of $\Omega$. This is done by considering nondegenerate solutions of \eqref{eq:pde} in $\Omega$.

In Section \ref{sec:cone} we consider the case when $\mathcal C$ is a cone $\Sigma_D$. In this setting we take domains which are defined by smooth radial graphs over $D$, in particular we consider the spherical sector $\Omega_D$ and a corresponding radial solution $u_D$ for which we prove the stability/instability result. 

Finally in Section \ref{sec:cylinder} we study the case of the cylinder $\Sigma_\omega$ and prove the corresponding stability/instability  result for the pair $(\Omega_\omega, u_\omega)$ when $\Omega_\omega$ is a bounded cylinder and $u_\omega$ is as in \eqref{eq:extension_of_one_dimensional_solution} and \eqref{eq:nonlinear_ode_cylinder_intro}.

\section{Semilinear elliptic problems in unbounded sets}
\label{sec:general_unbounded_domains}

In this section we consider problem \eqref{eq:pde} in a bounded Lipschitz domain $\Omega$ contained in an unbounded open set $\mathcal C$ which we assume to be (uniformly) Lipschitz regular.

Starting from a positive nondegenerate solution of \eqref{eq:pde} in $\Omega$ we show how to define an energy functional for small variations of $\Omega$ which preserve the volume.

\subsection{Nondegenerate solutions}
\label{subsec:nondegenerate_solutions}

Let $\Omega \subset \mathcal C$ be a bounded domain whose relative boundary $\Gamma_\Omega = \partial \Omega \cap \mathcal C$ is a smooth manifold (with boundary). As in Section \ref{sec:introduction} we set $\Gamma_{1, \Omega} = \partial \Omega\setminus{\overline{\Gamma}_\Omega}$.

We consider a positive weak solution $u_\Omega$ of \eqref{eq:pde} in the Sobolev space $H_0^1(\Omega \cup \Gamma_{1, \Omega})$, which is the subspace of $H^1(\Omega)$ of functions whose trace vanishes on $\Gamma_\Omega$. By standard variational methods, such as constrained minimization, Mountain-Pass Theorem etc, it is easy to exhibit many nonlinearities $f = f(s)$ for which such a solution exists. Moreover, with suitable assumptions on the growth of $f$ we also have, by regularity results, that $u_\Omega$ is a classical solution of \eqref{eq:pde} inside $\Omega$ and at any regular point of $\partial \Omega$, and that $u_\Omega$ is bounded (see also \cite[Proposition 3.1]{CiraoloPacellaPolvara2023}).

We assume that $u_\Omega$ is nondegenerate, i.e., the linearized operator
\begin{equation}
\label{eq:linearized_operator}
L_{u_\Omega} = - \Delta - f'(u_\Omega)
\end{equation}
does not have zero as an eigenvalue in $H_0^1(\Omega \cup \Gamma_{1, \Omega})$ or, in other words, $L_{u_\Omega}$ defines an isomorphism between $H_0^1(\Omega \cup \Gamma_{1, \Omega})$ and its dual space.
We consider small deformations of $\Omega$ which leave $\mathcal{C}$ invariant and would like to show that the nondegeneracy of $u_\Omega$ induces a local uniqueness result for solutions of \eqref{eq:pde} in the deformed domains. Thus we take a one-parameter family of diffeomorphisms $\xi_t$, for $t \in (- \eta, \eta)$, $\eta > 0$, associated to a smooth vector field $V$ such that $V(x) \in T_x\partial \mathcal C$ for every $x \in \partial \mathcal C^{\mathrm{reg}}$, $V(x)=0$ for $x\in\partial \mathcal C\setminus\partial \mathcal C^{\mathrm{reg}}$, and set $\Omega_t:= \xi_t(\Omega)$, where $T_x\partial \mathcal C$ denotes the tangent space to $\partial \mathcal C$ at the point $x$, and $\partial\mathcal C^{\mathrm{reg}}$ denotes the regular part of $\partial\mathcal C$. In particular $\Omega_0=\Omega$ and in order to simplify the notations we set
\begin{equation}
\Gamma_t \coloneqq \Gamma_{\Omega_t}, \quad \Gamma_{1, t} \coloneqq \Gamma_{1, \Omega_t}.
\end{equation}

\begin{prop}
\label{prop:local_uniqueness}
Let $u_\Omega$ be a positive nondegenerate solution of \eqref{eq:pde}, which belongs to $W^{1, \infty}(\Omega) \cap W^{2, 2}(\Omega)$. Let $V$ be a smooth vector field and let $\xi_t$ be the associated family of diffeomorphisms. Then there exists $\delta > 0$ such that for any $t \in (- \delta, \delta)$ there is a unique solution $u_t$ of the problem
\begin{equation}
\label{eq:pde_in_Omega_t}
\left\{
\begin{array}{rcll}
- \Delta u & = & f(u) & \quad \text{ in } \Omega_t \\[4pt]
u & = & 0 & \quad \text{ on } \Gamma_t \\[2pt]
\displaystyle \frac{\partial u}{\partial \nu} & = & 0 & \quad \text{ on } \Gamma_{1, t}
\end{array}
\right.
\end{equation}
in a neighborhood of the function $u_\Omega \circ \xi_t^{-1}$ in the space $H_0^1(\Omega_t \cup \Gamma_{1, t})$. Moreover, the map $t \mapsto u_t$ is differentiable.
\end{prop}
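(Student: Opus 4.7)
The plan is to reduce the problem to the fixed domain $\Omega$ by pulling back via the diffeomorphisms $\xi_t$, and then apply the implicit function theorem in Banach spaces. Since $V$ is tangent to $\partial\mathcal C^{\mathrm{reg}}$ and vanishes on the singular part, the flow $\xi_t$ maps $\mathcal C$ into itself for small $|t|$; consequently $\xi_t(\Gamma_\Omega) = \Gamma_t$ and $\xi_t(\Gamma_{1,\Omega}) = \Gamma_{1,t}$, so the pullback $u \mapsto u \circ \xi_t$ is a bounded isomorphism from $H_0^1(\Omega_t \cup \Gamma_{1,t})$ onto $H_0^1(\Omega \cup \Gamma_{1,\Omega})$ with inverse bounded uniformly in $t$.

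First, I would rewrite \eqref{eq:pde_in_Omega_t} on $\Omega$. Setting $v := u \circ \xi_t$, a standard change-of-variables computation transforms \eqref{eq:pde_in_Omega_t} into a problem of the form
\begin{equation*}
-\divergence\bigl(A(t,\cdot)\nabla v\bigr) = J(t,\cdot)\,f(v) \quad \text{in } \Omega, \qquad v = 0 \text{ on } \Gamma_\Omega, \qquad A(t,\cdot)\nabla v\cdot\nu = 0 \text{ on } \Gamma_{1,\Omega},
\end{equation*}
where $A(t,x) = J(t,x)\,D\xi_t^{-1}(\xi_t(x))\bigl(D\xi_t^{-1}(\xi_t(x))\bigr)^T$ and $J(t,x) = |\det D\xi_t(x)|$ depend smoothly on $t$, with $A(0,\cdot) = I$ and $J(0,\cdot) = 1$. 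I would then introduce the map
\begin{equation*}
F : (-\eta,\eta) \times H_0^1(\Omega \cup \Gamma_{1,\Omega}) \longrightarrow H_0^1(\Omega \cup \Gamma_{1,\Omega})^*
\end{equation*}
defined by the weak formulation of the above equation, so that $F(t,v)=0$ iff $v$ is a weak solution of the pulled-back problem. By construction $F(0, u_\Omega) = 0$.

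Next, I would verify that $F$ is of class $C^1$ near $(0, u_\Omega)$: the dependence on $t$ is $C^1$ because $(A,J)$ are, while the dependence on $v$ is handled via Nemytskii operator arguments using that $f \in C^{1,\alpha}_{\mathrm{loc}}$, that $u_\Omega \in W^{1,\infty} \cap W^{2,2}$, and a subcritical growth bound (which we may assume without loss of generality after truncating $f$ outside an $L^\infty$-neighborhood of $u_\Omega$, since we are looking for solutions close to $u_\Omega$ in $H^1$ and hence in $L^\infty$ by bootstrapping). A direct computation then gives
\begin{equation*}
\partial_v F(0, u_\Omega) = L_{u_\Omega} = -\Delta - f'(u_\Omega),
\end{equation*}
which is an isomorphism of $H_0^1(\Omega \cup \Gamma_{1,\Omega})$ onto its dual precisely by the nondegeneracy hypothesis.

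The implicit function theorem then yields $\delta>0$ and a differentiable curve $t \mapsto v_t$ with $v_0 = u_\Omega$ such that $F(t,v_t)=0$ and $v_t$ is the unique zero of $F(t,\cdot)$ in a fixed $H^1$-neighborhood of $u_\Omega$. Setting $u_t := v_t \circ \xi_t^{-1}$ produces the desired family of solutions of \eqref{eq:pde_in_Omega_t}, and the local uniqueness around $u_\Omega \circ \xi_t^{-1}$ follows from the uniqueness statement in the IFT together with the uniform bound on the pullback isomorphism. The main delicate point is ensuring that the conormal boundary condition transforms covariantly under pullback into a genuine weak-form condition incorporated in $F$, and that the Nemytskii-type nonlinearity $v \mapsto J(t,\cdot)f(v)$ is $C^1$ as a map into $H^1(\Omega)^*$; these are the places where the tangency of $V$ to $\partial \mathcal C$ and the regularity $u_\Omega \in W^{1,\infty} \cap W^{2,2}$ enter in an essential way.
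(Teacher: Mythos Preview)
Your proposal is correct and follows essentially the same approach as the paper: pull back the equation to the fixed domain via $\xi_t$, obtain the map $F(t,v)=-\operatorname{div}(A(t,\cdot)\nabla v)-J(t,\cdot)f(v)$ into $H_0^1(\Omega\cup\Gamma_{1,\Omega})^*$, compute $\partial_v F(0,u_\Omega)=-\Delta-f'(u_\Omega)$, and apply the implicit function theorem using nondegeneracy. The paper's proof is in fact slightly more cursory than yours about the $C^1$ regularity of the Nemytskii term and the covariance of the conormal condition, so your additional remarks on these points are welcome but not a departure from the strategy.
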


\begin{proof}
By using the diffeomorphism $\xi_t$ we can pass from the space $H_0^1(\Omega \cup \Gamma_{1, \Omega})$ to the space $H_0^1(\Omega_t \cup \Gamma_{1, t})$. Indeed, 
\begin{equation}
H_0^1(\Omega \cup \Gamma_{1, \Omega}) = \{v \circ \xi_t  \ : \ v \in H_0^1(\Omega_t \cup \Gamma_{1, t})\}.
\end{equation}
Moreover, $u_t$ is a weak solution of \eqref{eq:pde_in_Omega_t}, i.e.,
\begin{equation*}
\int_{\Omega_t} \nabla u_t \cdot \nabla v \ dx - \int_{\Omega_t} f(u_t) v \ dx = 0 \quad \forall v \in H_0^1(\Omega_t \cup \Gamma_{1, t})
\end{equation*}
if and only if the function $\widehat u_t = u_t \circ \xi_t \in H_0^1(\Omega \cup \Gamma_{1, \Omega})$ satisfies 
\begin{equation}
\label{eq:equation_for_u_t_hat}
\int_\Omega (M_t \nabla \widehat u_t)\cdot \nabla w J_t \ dx - \int_\Omega f(\widehat u_t) w J_t \ dx = 0 \quad \forall w \in H_0^1(\Omega \cup \Gamma_{1, \Omega})
\end{equation}
where 
\begin{equation*}
J_t(x) = \left|\det(\jac \xi_t(x))\right|
\end{equation*}
and 
\begin{equation}
M_t = [\jac \xi_t^{-1}(\xi_t(x))][\jac \xi_t^{-1}(\xi_t(x))]^T.
\end{equation}
In other words, setting $\widehat M_t:=M_t J_t$, we have that $\widehat u_t$ is a solution of 
\begin{equation*}
- \divergence (\widehat M_t \nabla \widehat u_t) - f(\widehat u_t)J_t = 0
\end{equation*}
in the space $H_0^1(\Omega \cup \Gamma_{1, \Omega})$.
Now we consider the map 
\begin{equation*}
\mathcal F: (- \eta, \eta) \times H_0^1(\Omega \cup \Gamma_{1, \Omega}) \to H_0^1(\Omega \cup \Gamma_{1, \Omega})^*
\end{equation*}
defined as
\begin{equation}
\mathcal F(t, v) = - \divergence(\widehat M_t \nabla v) - f(v) J_t.
\end{equation}
Since $u_\Omega$ is a solution in $\Omega$ and $\xi_0$ is the identity map we have
\begin{equation*}
\mathcal F(0, u_\Omega) = 0.
\end{equation*}

Notice that $\mathcal F$ is differentiable with respect to to $v$, and
\begin{equation}
\label{eq:partial_v_mathcal_F}
\partial_v \mathcal F(0, u_\Omega) = - \Delta - f'(u_\Omega).
\end{equation}
Indeed, for any $h \in H_0^1(\Omega \cup \Gamma_{1, \Omega})$ we have
\begin{align}
\frac{\mathcal F(t, v + \varepsilon h) - \mathcal F(t, v)}{\varepsilon}
& = \frac{- \divergence (\widehat M_t(\nabla v + \varepsilon \nabla h) ) - f(v + \varepsilon h)J_t - (-\divergence (\widehat M_t \nabla v) - f(v)J_t)}{\varepsilon} \nonumber \\
& = - \frac{\divergence (\varepsilon \widehat M_t \nabla h)}{\varepsilon} - \frac{(f(v + \varepsilon h) - f(v))J_t}{\varepsilon} \nonumber \\
& \to - \divergence(\widehat M_t \nabla h) - f'(v)J_t \nonumber 
\end{align}
as $\varepsilon \to 0$. Hence $\mathcal F$ is differentiable and evaluating $\partial_v \mathcal F$ at $(0, u_\Omega)$ we obtain \eqref{eq:partial_v_mathcal_F}.

By the nondegeneracy assumption on the solution $u_\Omega$, we infer that \eqref{eq:partial_v_mathcal_F} gives an isomorphism between $H_0^1(\Omega \cup \Gamma_{1, \Omega})$ and $H_0^1(\Omega \cup \Gamma_{1, \Omega})^*$. Then, by the Implicit Function Theorem, there exists an interval $(- \delta, \delta)$ and a neighborhood $\mathcal B$ of $u_\Omega$ in $H_0^1(\Omega \cup \Gamma_{1, \Omega})$ such that for every $t \in (- \delta, \delta)$ there exists a unique function $\widehat u_t \in H_0^1(\Omega \cup \Gamma_{1, \Omega})$ in $\mathcal B$ such that $\mathcal F(t, \widehat u_t) = 0$, that is, $\widehat u_t$ is the unique solution (in $\mathcal B$) of \eqref{eq:equation_for_u_t_hat}. It follows that $u_t = \widehat u_t \circ \xi_t^{-1}$ is the unique solution of \eqref{eq:pde_in_Omega_t} in a neighborhood of $u_\Omega \circ \xi_t^{-1}$ in $H_0^1(\Omega_t \cup \Gamma_{1, t})$.

Finally, since the map $t \mapsto \widehat u_t$ is smooth, so is the map $t \mapsto u_t$. In addition
\begin{equation}
\label{eq:def_u_tilde}
\widetilde u \coloneqq \left. \frac{d}{dt} \right|_{t = 0} u_t = \left(\left. \frac{d}{dt} \right|_{t = 0}\widehat u_t\right) -  \langle \nabla u_\Omega, V\rangle.
\end{equation}

The proof is complete.
\end{proof}

Note that, as for $u_\Omega$, $u_t$ is a classical solution of \eqref{eq:pde_in_Omega_t} in $\Omega_t$ and on the regular part of $\partial \Omega_t$. By Proposition \ref{prop:local_uniqueness} we have that the energy functional
\begin{equation}
\label{eq:def_T}
T(\Omega_t) = J(u_t) = \frac{1}{2} \int_{\Omega_t} |\nabla u_t|^2 \ dx - \int_{\Omega_t} F(u_t) \ dx,
\end{equation}
where $F(s) = \int_0^s f(\tau) \ d\tau$, is well defined for all sufficiently small $t$. Observe that, since $u_t$ is a solution to \eqref{eq:pde_in_Omega_t}, we have 
\begin{equation*}
\int_{\Omega_t} |\nabla u_t|^2 \ dx = \int_{\Omega_t} f(u_t)u_t \ dx,
\end{equation*}
so we can also write
\begin{equation}
\label{eq:def_T_alternative}
T(\Omega_t) = \frac{1}{2} \int_{\Omega_t} f(u_t)u_t \ dx - \int_{\Omega_t} F(u_t) \ dx.
\end{equation}

In the next result we show that $T$ is differentiable with respect to $t$ and compute its derivative at $t = 0$, that is, at the initial domain $\Omega$.

\begin{prop}
\label{prop:T_1_general}
Assume that $u_\Omega$ is a positive nondegenerate solution of \eqref{eq:pde} which belongs to $W^{1, \infty}(\Omega) \cap W^{2, 2}(\Omega)$. Then
\begin{equation}
\label{eq:T_1_general} 
\left. \frac{d}{dt} \right|_{t = 0} T(\Omega_t) = - \frac{1}{2} \int_{\Gamma_\Omega} |\nabla u_\Omega|^2 \langle V, \nu \rangle \ d\sigma.
\end{equation}
\end{prop}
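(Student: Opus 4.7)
The plan is to compute $\frac{d}{dt}|_{t=0} T(\Omega_t)$ by combining the Reynolds transport theorem with integration by parts, using the PDE and boundary conditions to extract a pure boundary contribution on $\Gamma_\Omega$. Throughout I will use the shape derivative $\widetilde u = \frac{d}{dt}|_{t=0} u_t$ introduced in \eqref{eq:def_u_tilde}, whose existence and regularity are guaranteed by Proposition \ref{prop:local_uniqueness}; in particular $\widetilde u \in H_0^1(\Omega \cup \Gamma_{1,\Omega})$.

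First I would apply the Reynolds transport theorem (or, equivalently, differentiate under the integral after pulling back to the fixed domain $\Omega$ via $\xi_t$, producing Jacobian factors that differentiate to $\divergence V$ and reproduce the boundary term) to obtain
$$\left. \frac{d}{dt} \right|_{t = 0} T(\Omega_t) = \int_\Omega \bigl(\nabla u_\Omega \cdot \nabla \widetilde{u} - f(u_\Omega)\widetilde{u}\bigr)\,dx + \int_{\partial \Omega}\left(\tfrac{1}{2}|\nabla u_\Omega|^2 - F(u_\Omega)\right) \langle V, \nu\rangle\,d\sigma.$$
Then I would integrate by parts in the bulk term, using $-\Delta u_\Omega = f(u_\Omega)$ in $\Omega$, which cancels the $f(u_\Omega)\widetilde u$ contribution and leaves
$$\int_\Omega \bigl(\nabla u_\Omega\cdot\nabla\widetilde u - f(u_\Omega)\widetilde u\bigr)\,dx = \int_{\partial\Omega}\widetilde u\,\frac{\partial u_\Omega}{\partial\nu}\,d\sigma.$$
The regularity $u_\Omega\in W^{1,\infty}(\Omega)\cap W^{2,2}(\Omega)$ and the smoothness of the relative boundary $\Gamma_\Omega$ ensure that these boundary integrals make sense.

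Next I would analyze the contributions on $\Gamma_\Omega$ and $\Gamma_{1,\Omega}$ separately. On $\Gamma_{1,\Omega}\subset\partial\mathcal C$ the Neumann condition gives $\partial_\nu u_\Omega=0$, killing the bulk-boundary term, and the hypothesis that $V$ is tangent to $\partial\mathcal C$ forces $\langle V,\nu\rangle=0$ (since $\nu$ coincides with the outer normal to $\partial\mathcal C$ there), so the Reynolds boundary term also vanishes on $\Gamma_{1,\Omega}$. On $\Gamma_\Omega$, where $u_\Omega\equiv 0$, I have $F(u_\Omega)=F(0)=0$ (after absorbing the constant $F(0)$, which does not affect variations) and $\nabla u_\Omega$ is parallel to $\nu$; moreover, differentiating the identity $u_t\circ\xi_t\equiv 0$ on $\Gamma_\Omega$ and using \eqref{eq:def_u_tilde} yields
$$\widetilde u = -\langle \nabla u_\Omega, V\rangle = -\frac{\partial u_\Omega}{\partial\nu}\langle V,\nu\rangle \quad\text{on }\Gamma_\Omega.$$
Substituting, the bulk-boundary contribution becomes $-\int_{\Gamma_\Omega}|\nabla u_\Omega|^2\langle V,\nu\rangle\,d\sigma$, while the Reynolds boundary contribution on $\Gamma_\Omega$ is $\tfrac12\int_{\Gamma_\Omega}|\nabla u_\Omega|^2\langle V,\nu\rangle\,d\sigma$. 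Adding these gives exactly \eqref{eq:T_1_general}.

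The main technical obstacle is justifying the formal application of the Reynolds formula (and the integration by parts) in the present setting, where $\partial\Omega$ splits into a smooth relative piece $\Gamma_\Omega$ and a less regular piece $\Gamma_{1,\Omega}\subset\partial\mathcal C$. This is exactly where the assumption $u_\Omega\in W^{1,\infty}(\Omega)\cap W^{2,2}(\Omega)$ and the smoothness of $\Gamma_\Omega$ assumed from the outset come in: they provide enough regularity for the traces of $\nabla u_\Omega$ on $\Gamma_\Omega$, and the differentiability of $t\mapsto\widehat u_t$ in $H^1(\Omega)$ from Proposition \ref{prop:local_uniqueness} makes the differentiation under the integral rigorous. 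The condition that $V$ be tangent to $\partial\mathcal C^{\mathrm{reg}}$ and vanish on $\partial\mathcal C\setminus\partial\mathcal C^{\mathrm{reg}}$ neutralizes any singular contribution coming from $\partial\Gamma_\Omega$.
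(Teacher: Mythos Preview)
Your argument is correct and reaches the same formula, but it follows a different route from the paper. The paper first rewrites the energy in the gradient-free form
\[
T(\Omega_t)=\int_{\Omega_t}\Bigl(\tfrac12 f(u_t)u_t-F(u_t)\Bigr)\,dx
\]
before applying the domain-differentiation formula, and then uses \emph{both} PDEs, $-\Delta u_\Omega=f(u_\Omega)$ and the linearized equation $-\Delta\widetilde u=f'(u_\Omega)\widetilde u$, together with Green's second identity to produce the boundary term. Your approach differentiates the original form $\tfrac12\int|\nabla u_t|^2-\int F(u_t)$ directly via Reynolds transport and needs only one integration by parts with $-\Delta u_\Omega=f(u_\Omega)$; the linearized equation for $\widetilde u$ never enters. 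The trade-off is that your route requires the interchange $\frac{d}{dt}\big|_{t=0}\nabla u_t=\nabla\widetilde u$ (to differentiate $|\nabla u_t|^2$), which is a nontrivial step the paper avoids by using the gradient-free expression; the paper does invoke this interchange elsewhere, so it is available, but you should flag it explicitly.

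Two small corrections. First, your claim that $\widetilde u\in H_0^1(\Omega\cup\Gamma_{1,\Omega})$ is not right: it is $\widetilde u+\langle\nabla u_\Omega,V\rangle=\frac{d}{dt}\big|_{t=0}\widehat u_t$ that belongs to this space, which is precisely why $\widetilde u=-\frac{\partial u_\Omega}{\partial\nu}\langle V,\nu\rangle$ on $\Gamma_\Omega$ rather than zero. Second, $F(0)=\int_0^0 f=0$ automatically, so no ``absorption'' of a constant is needed.
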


\begin{proof}
Recall from Proposition \ref{prop:local_uniqueness} that $t \mapsto u_t$ is smooth and \eqref{eq:def_u_tilde} holds. Differentiating the equation $- \Delta u_t = f(u_t)$ with respect to $t$ we obtain
\begin{equation}\label{eq:equazionetildeu}
- \Delta \widetilde u = f'(u_\Omega) \widetilde u \quad \text{ in } \Omega.
\end{equation}
Now observe that by the hypotheses on $u_\Omega$ we have that
\begin{equation}
\widetilde u + \langle \nabla u_\Omega, V \rangle = \left(\left. \frac{d}{dt} \right|_{t = 0}  \widehat u_t\right) \in H_0^1(\Omega \cup \Gamma_{1,\Omega}),
\end{equation}
thus
\begin{equation}\label{eq:bondcondtildeugammaomega}
\widetilde u = - \frac{\partial u_\Omega}{\partial \nu} \langle V, \nu \rangle \quad \text{ on } \Gamma_\Omega.
\end{equation}
Finally, since $\xi_t$ maps $\partial \mathcal C$ into itself we have that, for all small $t$ and $x \in(\partial\mathcal C\cap\partial\Omega)^{\mathrm{reg}}$
\begin{equation*}
\langle \nabla u_t(\xi_t(x)), \nu(\xi_t(x)) \rangle = 0.
\end{equation*}
Differentiating this relation with respect to $t$ and evaluating at $t=0$ we obtain
\begin{equation*}
0 = \langle \nabla \widetilde u(x), \nu(x) \rangle + d_x(\langle \nabla u_\Omega, \nu\rangle)[V(x)],
\end{equation*}
where $d_x(\langle \nabla u_\Omega, \nu\rangle)[V(x) ]$ is the differential of the function $\langle \nabla u_\Omega, \nu\rangle\big|_{(\partial\mathcal C\cap\partial\Omega)^{\mathrm{reg}}}$ computed at $x$, along $V(x)$. Then, since $\langle \nabla u_\Omega, \nu\rangle=0$ on $(\partial\mathcal C\cap\partial\Omega)^{\mathrm{reg}}$, and in view of \eqref{eq:equazionetildeu}, \eqref{eq:bondcondtildeugammaomega}, we infer that  $\widetilde u$ satisfies
\begin{equation}
\label{eq:equation_for_u_tilde}
\left\{
\begin{array}{rcll}
- \Delta \widetilde u & = & f'(u_\Omega) \widetilde u & \quad \text{ in } \Omega \\[4pt]
\widetilde u & = & \displaystyle - \frac{\partial u_\Omega}{\partial \nu} \langle V, \nu \rangle & \quad \text{ on } \Gamma_\Omega \\[2pt]
\displaystyle \frac{\partial \widetilde u}{\partial \nu} & = & 0 & \quad \text{ on }\ \Gamma_{1,\Omega}
\end{array}
\right.
\end{equation}
in the classical sense in the interior of $\Omega$ and on the regular part of $\partial \Omega$.

Recalling \eqref{eq:def_T_alternative} we can write
\begin{equation*}
T(\Omega_t) = \int_{\Omega_t} \frac{1}{2} \left(f(u_t)u_t - F(u_t) \right)\ dx.
\end{equation*}

Since $t\mapsto f(u_t)u_t - F(u_t)$ is differentiable at $t=0$, $\partial\Omega$ is Lipschitz and taking into account that $u_\Omega \in W^{1, \infty}(\Omega) \cap W^{2, 2}(\Omega)$, then, applying \cite[Theorem 5.2.2]{HenrotPierre2018}, we can compute the derivative with respect to $t$ of the functional $T$ obtaining that 
\begin{align}
\left. \frac{d}{dt}\right|_{t = 0} T(\Omega_t)
& = \frac{1}{2} \int_\Omega (f'(u_\Omega)\widetilde u u_\Omega + f(u_\Omega) \widetilde u) \ dx - \int_\Omega f(u_\Omega) \widetilde u \ dx  \nonumber \\
& \quad + \int_{\partial \Omega} \left( \frac{1}{2}f(u_\Omega)u_\Omega - F(u_\Omega)\right) \langle V, \nu \rangle \ d \sigma \nonumber \\
& = \frac{1}{2} \int_\Omega \left(f'(u_\Omega) \widetilde u u_\Omega - f(u_\Omega)\widetilde u \right)\ dx \nonumber \\
& = \frac{1}{2} \int_\Omega \left( (- \Delta \widetilde u) u_\Omega + \Delta u_\Omega \widetilde u \right) \ dx \nonumber \\
& = \frac{1}{2} \int_{\partial \Omega} \left( \widetilde u \frac{\partial u_\Omega}{\partial \nu} - u_\Omega \frac{\partial \widetilde u}{\partial \nu} \right) \ d \sigma \nonumber \\
& = - \frac{1}{2} \int_{\Gamma_\Omega} |\nabla u_\Omega|^2 \langle V, \nu \rangle \ d \sigma.
\end{align}
The previous applications of the Divergence Theorem are justified by arguing as in \cite[Lemma 2.1]{PacellaTralli2020}, where the regularity hypothesis on $u_\Omega$ comes into play.
\end{proof}

\begin{remark}
It is not difficult to see that $\widetilde u$ is also a weak solution of \eqref{eq:equation_for_u_tilde}. Indeed, let $\varphi \in C_c^\infty(\Omega \cup \Gamma_{1,\Omega})$. Then, for all sufficiently small $t$, we also have $\varphi \in C_c^\infty(\Omega_t \cup \Gamma_{1, t})$. Hence, since $u_t$ is a weak solution to \eqref{eq:pde_in_Omega_t}, we have
\begin{equation}
\label{eq:u_tilde_is_a_weak_solution}
0 = \int_{\Omega_t} \nabla u_t \nabla \varphi \ dx - \int_{\Omega_t} f(u_t) \varphi \ dx = \int_{\Omega} \nabla u_t \nabla \varphi \ dx - \int_{\Omega} f(u_t) \varphi \ dx.
\end{equation}
Now, as proved in \cite[Claim (3.17)]{IacopettiPacellaWeth2022}, it holds that
\begin{equation*}
\left. \frac{d}{dt} \right|_{t = 0} \nabla u_t = \nabla \widetilde u.
\end{equation*}
Then, taking the derivative with respect to $t$ in \eqref{eq:u_tilde_is_a_weak_solution}, evaluating at $t=0$, and since $\varphi$ is arbitrary, we easily conclude.
\end{remark}

Let us now consider domains $\Omega \subset \mathcal C$ of fixed measure $c>0$ and define
\begin{equation}\label{eq:admissibledomains}
\mathcal A \coloneqq \{\Omega \subset \mathcal C \ : \ \Omega \text{ is admissible and } |\Omega| = c\},
\end{equation}
where admissible means that $\Omega \subset \mathcal C$ is a bounded domain with smooth relative boundary $\Gamma_\Omega \coloneqq \partial \Omega \cap \mathcal C$, $\partial\Gamma_\Omega$ is a smooth $(N-2)$-dimensional manifold and $\Gamma_{1, \Omega} \coloneqq \partial \Omega \setminus \overline \Gamma_\Omega$ is such that $\mathcal H^{N - 1}(\Gamma_{1, \Omega}) > 0$.   We consider vector fields that induce deformations that preserve the volume. More precisely we take a one-parameter family of diffeomorphisms $\xi_t$, $t\in (-\eta,\eta)$, associated to a smooth vector field $V$ such that $V(x) \in T_x\partial \mathcal C^\mathrm{reg}$ for all $x \in \partial \mathcal C^\mathrm{reg}$, and satisfying the condition $|\Omega_t| = |\Omega|$, for all $t\in (-\eta,\eta)$, where $\Omega_t = \xi_t(\Omega)$.

\begin{definition}
\label{def:energy_stationary}
We say that the pair $(\Omega, u_\Omega)$ is energy-stationary under a volume constraint if
\begin{equation}
\left.\frac{d}{dt}\right|_{t = 0} T(\Omega_t) = 0
\end{equation}
for any vector field tangent to $\partial \mathcal C$ such that the associated one-parameter family of diffeomorphisms preserves the volume.
\end{definition}

A characterization of energy-stationary pairs in $\mathcal C$ is the following:

\begin{prop}
\label{prop:energy_stationary_char}
Let $\Omega \in \mathcal{A}$ and assume that $u_\Omega \in W^{1, \infty}(\Omega) \cap W^{2, 2}(\Omega)$ is a  nondegenerate positive solution of \eqref{eq:pde}. Then $(\Omega, u_\Omega)$ is energy-stationary under a volume constraint if and only if $u_\Omega$ satisfies the overdetermined condition $|\nabla u_\Omega| = \text{constant}$ on $\Gamma_\Omega$.
\end{prop}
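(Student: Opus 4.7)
The plan is to reduce the statement to a linear elementary fact on $\Gamma_\Omega$. I would start by applying Proposition~\ref{prop:T_1_general}, which gives
\[
\left.\frac{d}{dt}\right|_{t=0} T(\Omega_t) = -\tfrac{1}{2}\int_{\Gamma_\Omega} |\nabla u_\Omega|^2 \langle V,\nu\rangle\, d\sigma,
\]
and then translate the volume-preserving constraint $|\Omega_t|=|\Omega|$ into a first-order condition on $V$. Differentiating this constraint at $t=0$, applying the divergence theorem, and using that $V$ is tangent to $\partial\mathcal{C}$ (so $\langle V,\nu\rangle\equiv 0$ on $\Gamma_{1,\Omega}$) yields
\[
\int_\Omega \divergence V\, dx = \int_{\Gamma_\Omega}\langle V,\nu\rangle\, d\sigma = 0.
\]
Thus $(\Omega,u_\Omega)$ is energy-stationary if and only if $\int_{\Gamma_\Omega}|\nabla u_\Omega|^2\,\varphi\, d\sigma = 0$ for every $\varphi$ realised as $\langle V,\nu\rangle|_{\Gamma_\Omega}$ for some admissible volume-preserving $V$.

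The sufficiency of the overdetermined condition is immediate: if $|\nabla u_\Omega|\equiv c$ on $\Gamma_\Omega$, pulling $|\nabla u_\Omega|^2$ out of the integral and using the zero-mean property of $\varphi$ gives zero. For the necessity, I would invoke the fundamental lemma of the calculus of variations, provided one can show that every smooth mean-zero function $\varphi$ on $\Gamma_\Omega$ (in fact, it suffices to take $\varphi$ supported away from $\partial\Gamma_\Omega$) arises as $\langle V,\nu\rangle|_{\Gamma_\Omega}$ for some admissible volume-preserving $V$.

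The main obstacle is this realisation step. To build $V$ from $\varphi$, I would take a smooth extension of $\varphi$, cut off so as to vanish in a neighbourhood of $\partial\Gamma_\Omega$ (this is harmless because such $\varphi$ are dense among zero-mean smooth functions on $\Gamma_\Omega$), define it in a tubular neighbourhood of $\Gamma_\Omega$ and multiply by $\nu$ to produce a candidate $V_0$ which is automatically tangent to $\partial\mathcal{C}$ on $\partial\mathcal{C}^{\mathrm{reg}}$ and vanishes on $\partial\mathcal{C}\setminus\partial\mathcal{C}^{\mathrm{reg}}$. I would then use a classical Moser-type argument to perturb $V_0$ by a time-dependent divergence-free correction supported strictly in the interior of $\Omega$, so that the resulting flow preserves the volume exactly rather than only to first order. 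The delicate point is maintaining the tangency at $\partial\mathcal{C}^{\mathrm{reg}}$ and the vanishing at the singular stratum throughout this correction; the cut-off near $\partial\Gamma_\Omega$ is what makes this routine. Once this realisation is available, stationarity forces $\int_{\Gamma_\Omega}|\nabla u_\Omega|^2\varphi\, d\sigma = 0$ for all such $\varphi$, hence $|\nabla u_\Omega|^2$ is constant on $\Gamma_\Omega$, and since $u_\Omega$ is positive inside $\Omega$, $|\nabla u_\Omega|$ itself is a non-negative constant on $\Gamma_\Omega$, which is the overdetermined condition.
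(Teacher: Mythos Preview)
Your approach is correct and parallels the paper's in the easy direction, but for the necessity you take a genuinely different route. The paper does \emph{not} prove a general realisation lemma and then invoke the fundamental lemma of the calculus of variations. Instead it argues by contradiction with a single explicit test field: assuming $|\nabla u_\Omega|$ is non-constant on some compact $K\subset\Gamma_\Omega$, it takes a cutoff $\Theta$ with $\Theta\equiv 1$ on $K$, sets
\[
a=\frac{\int_{\Gamma_\Omega}\Theta\,|\nabla u_\Omega|^2\,d\sigma}{\int_{\Gamma_\Omega}\Theta\,d\sigma},
\qquad V=(|\nabla u_\Omega|^2-a)\,\Theta\,\nu,
\]
so that $\int_{\Gamma_\Omega}\langle V,\nu\rangle\,d\sigma=0$ by construction, and then stationarity forces $\int_K(|\nabla u_\Omega|^2-a)^2\,d\sigma=0$, a contradiction.

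The trade-off is clear. The paper's trick of testing against $|\nabla u_\Omega|^2$ itself produces a non-negative integrand and bypasses both the density argument and the Moser-type volume correction you outline; it is shorter and needs only one admissible deformation rather than a whole family. Your approach, on the other hand, yields a reusable lemma (every compactly supported mean-zero $\varphi$ on $\Gamma_\Omega$ is the normal trace of an admissible volume-preserving field), which is a cleaner structural statement and would be handy if one later needed to test against other functions. Both arguments leave the passage from first-order volume preservation to an exactly volume-preserving flow somewhat implicit; your sketch via an interior correction is one valid way, and an alternative is the standard two-parameter implicit-function-theorem device using an auxiliary field $W$ with $\int_{\Gamma_\Omega}\langle W,\nu\rangle\,d\sigma\neq 0$.
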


\begin{proof}
Let $\xi_t$ be an arbitrary admissible one-parameter family of diffeomorphisms and let $V$ be the associated vector field. Since the volume is preserved and $V(x) \in T_x\partial \mathcal C$ on $\partial \mathcal C$, 
\begin{equation}
\label{eq:Pacella_Tralli_2_4.6}
0 = \left. \frac{d}{dt} \right|_{t = 0} |\Omega_t| = \int_{\partial \Omega} \langle V, \nu \rangle \ d \sigma = \int_{\Gamma_{\Omega}} \langle V, \nu \rangle \ d \sigma.
\end{equation}
If $|\nabla u_\Omega|$ is constant on $\Gamma_{\Omega}$, then $(\Omega, u_\Omega)$ is energy-stationary, in view of \eqref{eq:T_1_general} and \eqref{eq:Pacella_Tralli_2_4.6}. On the other hand, if $(\Omega, u_\Omega)$ is energy stationary, then 
\begin{equation}
\int_{\Gamma_{\Omega}} (|\nabla u_\Omega|^2 - a) \langle V, \nu\rangle \ d \sigma = 0
\end{equation}
for every constant $a$ and every admissible vector field $V$. Assume by contradiction that $|\nabla u_\Omega|$ is not constant on $\Gamma_{\Omega}$. Then there exists a compact set $K \subset \Gamma_{\Omega}$, with nonempty interior part, such that $|\nabla u_\Omega|$ is not constant on $K$. Take a nonnegative cutoff function $\Theta$ such that $\Theta \equiv 1$ in $K$, and choose
\begin{equation}
a = \frac{\int_{\Gamma_\Omega} \Theta |\nabla u_\Omega|^2 \ d\sigma}{\int_{\Gamma_\Omega} \Theta \ d\sigma}.
\end{equation}
Then we can build a deformation from the vector field $V = (|\nabla u_\Omega|^2 - a) \Theta \nu$, and in this case, since $(\Omega, u_\Omega)$ is energy stationary, we would have
\begin{equation}
\int_K (|\nabla u_\Omega|^2 - a)^2 \ d \sigma = 0,
\end{equation}
which contradicts the fact that $|\nabla u_\Omega|$ is not constant on $K$. The proof is complete.
\end{proof}

\begin{remark}
\label{rem:remark_RN_energy_stationary}
It is relevant to observe that all concepts introduced in this section apply to the case when $\Gamma_{1, \Omega}$ is empty, or, equivalently, when $\mathcal C = \mathbb R^N$. Thus all the above results hold for Dirichlet problems in domains in the whole space. In this case it is known, by Serrin's Theorem (see \cite{Serrin1971}) that if a positive solution for the overdetermined problem
\begin{equation}
\left\{
\begin{array}{rcll}
- \Delta u & = & f(u) & \quad \text{ in } \Omega \\[4pt]
u & = & 0 & \quad \text{ on } \partial \Omega \\[2pt]
\displaystyle \frac{\partial u}{\partial \nu} & = & \text{constant} & \quad \text{ on } \partial \Omega  
\end{array}
\right.
\end{equation}
exists, then $\Omega$ is a ball. Therefore, in view of Proposition \ref{prop:energy_stationary_char}, it follows that the only energy-stationary pairs in $\mathbb R^N$ are $(B, u_B)$, where $B$ is a ball and $u_B$ is a nondegenerate positive solution.
\end{remark}

\begin{remark}
We observe that all the results in this section hold true also for non-degenerate sign-changing solutions $u_\Omega$ to \eqref{eq:pde}. However, since in the sequel we study the stability in the case of positive solutions, we have considered only this case
\end{remark}

\section{The case of the cone}
\label{sec:cone}

Let $D \subset \mathbb{S}^{N - 1}$ be a smooth domain on the unit sphere and let $\Sigma_D$ be the cone spanned by $D$, which is defined as
\begin{equation}
\label{eq:def_cone}
\Sigma_D \coloneqq \{x \in \mathbb R^N \ : \ x = t q, \ q \in D, \ t > 0\}.
\end{equation}

In $\Sigma_D$ we consider admissible domains $\Omega$, in the sense of \eqref{eq:admissibledomains}, that are strictly star-shaped with respect to the vertex of the cone, which we choose to be the origin $0$ in $\mathbb R^N$. In other words, we consider domains whose relative boundary is the radial graph in $\Sigma_D$ of a  function in $C^2(\overline D, \mathbb R)$. Hence for $\varphi \in C^2(\overline D, \mathbb R)$ we set
\begin{equation}
\label{eq:radial_graph}
\Gamma_\varphi \coloneqq \{x \in \mathbb R^N \ : \ x = e^{\varphi(q)}q, \ q \in D\}
\end{equation}
and consider the strictly star-shaped domain $\Omega_\varphi$ defined as 
\begin{equation}
\label{eq:star_shaped_domain_in_the_cone}
\Omega_\varphi \coloneqq \{x \in \mathbb R^N \ : \ x = tq, \ 0 < t < e^{\varphi(q)}, \ q \in D\}.
\end{equation}
To simplify the notation we set
\begin{equation*}
\Gamma_{1, \varphi} \coloneqq \Gamma_{1, \Omega_\varphi} = \partial \Omega_\varphi \setminus\overline\Gamma_\varphi.
\end{equation*}

\subsection{Energy functional for star-shaped domains}

In $\Omega_\varphi$ we consider the semilinear elliptic problem
\begin{equation}
\label{eq:pde_cone_starshaped}
\left\{
\begin{array}{rcll}
- \Delta u & = & f(u) & \quad \text{ in } \Omega_\varphi \\[4pt]
u & = & 0 & \quad \text{ on } \Gamma_\varphi \\[2pt]
\displaystyle \frac{\partial u}{\partial \nu} & = & 0 & \quad \text{ on } \Gamma_{1, \varphi}\setminus\{0\}
\end{array}
\right.
\end{equation}
and assume throughout this section that a bounded positive nondegenerate solution $u_{\Omega_\varphi}$ exists and belongs to $W^{1, \infty}(\Omega_\varphi) \cap W^{2, 2}(\Omega_\varphi)$. Then we can apply the results of Section \ref{sec:general_unbounded_domains} and define the energy functional $T$ as in \eqref{eq:def_T} for small variations of $\Omega_\varphi$. Since $\Omega_\varphi$ is strictly star-shaped, this property also holds for the domains obtained by small regular deformations. Thus it is convenient to parametrize the domains and their variations by $C^2$ functions defined on $\overline D$. Hence, for $v \in C^2(\overline D, \mathbb R)$ and $t \in (- \eta, \eta)$, where $\eta > 0$ is a fixed number sufficiently small, we consider the domain variations $\Omega_{\varphi + tv} \subset \Sigma_D$.

Let $\xi:(- \eta, \eta) \times \overline{\Sigma}_D\setminus\{0\} \to \overline{\Sigma}_D\setminus\{0\}$ be the map defined by 
\begin{equation}
\xi(t, x) = e^{t v\left(\frac{x}{|x|} \right)}x.
\end{equation}
Then $\xi|_{\Omega_\varphi}(t, \cdot) : \Omega_\varphi \to \Omega_{\varphi + tv}$ is a diffeomorphism, whose inverse is 
\begin{equation}
(\xi|_{\Omega_\varphi})^{-1}(t, x) = e^{-t v \left(\frac{x}{|x|} \right)}x = \xi(-t, x).
\end{equation}
By definition, $\xi(t, x) \in \partial \Sigma_D \setminus \{0\}$ for all $(t, x) \in (- \eta, \eta) \times (\partial \Sigma_D \setminus \{0\})$ and $\xi$ is the flow associated to the vector field
\begin{equation}\label{eq:defVradial}
V(x) = v\left(\frac{x}{|x|} \right)x,
\end{equation}
since $\xi(0, x) = x$ and 
\begin{equation*}
\frac{d}{dt} \xi(t, x) = e^{tv\left(\frac{x}{|x|} \right)} v\left(\frac{x}{|x|} \right)x = V(\xi(t, x)).
\end{equation*}
The energy functional $T$ in \eqref{eq:def_T} becomes a functional defined on functions in $C^2(\overline D, \mathbb R)$. More precisely, we define, for every $v \in C^2(\overline D, \mathbb R)$,
\begin{equation}\label{eq:Tincrement}
T(\varphi + tv) := T(\Omega_{\varphi + tv}) = J(u_{\varphi + tv}),
\end{equation}
for $t \in (- \delta, \delta)$ with $\delta > 0$ small, where $$u_{\varphi + tv} \coloneqq u_{\Omega_{\varphi + tv}}$$ is the unique positive solution of \eqref{eq:pde_cone_starshaped} in the domain $\Omega_{\varphi + tv}$, in a neighborhood of $u_\varphi \circ \xi(t, \cdot)^{-1}$. 

We now compute the first derivative of the functional $T$ at $\varphi$ along a direction $v \in C^2(\overline D, \mathbb R)$, i.e. the derivative with respect to $t$ of \eqref{eq:Tincrement} computed at $t=0$.

\begin{lemma}
\label{lem:T1_radial_graphs}
Let $\varphi \in C^2(\overline D, \mathbb R)$ and assume that $u_\varphi$ is a bounded positive nondegenerate solution to \eqref{eq:pde_cone_starshaped} and that $u_\varphi$ belongs to $W^{1, \infty}(\Omega_\varphi) \cap W^{2, 2}(\Omega_\varphi)$. Then for any $v \in C^2(\overline D, \mathbb R)$ it holds that
\begin{equation}
\label{eq:T1_radial_graphs}
T'(\varphi)[v] = - \frac{1}{2} \int_D \left(\frac{\partial u_\varphi}{\partial \nu}(e^\varphi q) \right)^2 e^{N \varphi} v \ d \sigma
\end{equation}
\end{lemma}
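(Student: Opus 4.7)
The plan is to apply Proposition \ref{prop:T_1_general} with $\Omega = \Omega_\varphi$ and the radial vector field $V$ given by \eqref{eq:defVradial}, and then carry out the change of variables from $\Gamma_\varphi$ to $D$. By Proposition \ref{prop:local_uniqueness} the hypotheses of Proposition \ref{prop:T_1_general} are satisfied, so
\begin{equation*}
T'(\varphi)[v] = \left.\frac{d}{dt}\right|_{t=0} T(\Omega_{\varphi+tv}) = -\frac{1}{2}\int_{\Gamma_\varphi} |\nabla u_\varphi|^2 \langle V, \nu\rangle\,d\sigma,
\end{equation*}
where $\nu$ is the exterior unit normal to $\Omega_\varphi$ along $\Gamma_\varphi$. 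Since $u_\varphi\equiv 0$ on $\Gamma_\varphi$, the gradient $\nabla u_\varphi$ is parallel to $\nu$ on $\Gamma_\varphi$, hence $|\nabla u_\varphi|^2 = \bigl(\partial u_\varphi/\partial\nu\bigr)^2$.

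Next I would compute $\nu$ and the area element on $\Gamma_\varphi$ in terms of $\varphi$. Writing $\Gamma_\varphi$ as the zero level set of $F(x) = |x| - e^{\varphi(x/|x|)}$ and using that for the $0$-homogeneous extension $\tilde\varphi(x)=\varphi(x/|x|)$ one has $\nabla \tilde\varphi(x) = \tfrac{1}{|x|}\nabla^{\mathbb{S}} \varphi(x/|x|)$ (with $\nabla^{\mathbb{S}}$ the spherical gradient, tangent to $\mathbb{S}^{N-1}$), a direct computation gives, at the point $x = e^{\varphi(q)}q\in\Gamma_\varphi$,
\begin{equation*}
\nu(x) \;=\; \frac{q-\nabla^{\mathbb{S}}\varphi(q)}{\sqrt{1+|\nabla^{\mathbb{S}}\varphi(q)|^2}}.
\end{equation*}
Since $V(x)=v(q)\,x = v(q)e^{\varphi(q)}q$ on $\Gamma_\varphi$ and $\langle q,\nabla^{\mathbb{S}}\varphi(q)\rangle=0$, we obtain
\begin{equation*}
\langle V,\nu\rangle(x) \;=\; \frac{v(q)\,e^{\varphi(q)}}{\sqrt{1+|\nabla^{\mathbb{S}}\varphi(q)|^2}}.
\end{equation*}

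Then I would parametrize $\Gamma_\varphi$ by $\Phi:D\to\Gamma_\varphi$, $\Phi(q)=e^{\varphi(q)}q$. Writing the induced metric as $g_{ij}=\partial_i\Phi\cdot\partial_j\Phi$ and using $\partial_i q\cdot q=0$, one finds $g = e^{2\varphi}\bigl(g^{\mathbb{S}} + d\varphi\otimes d\varphi\bigr)$, whence by the matrix determinant lemma
\begin{equation*}
\sqrt{\det g} \;=\; e^{(N-1)\varphi}\sqrt{\det g^{\mathbb{S}}}\,\sqrt{1+|\nabla^{\mathbb{S}}\varphi|^2},
\end{equation*}
so that $d\sigma_{\Gamma_\varphi} = e^{(N-1)\varphi}\sqrt{1+|\nabla^{\mathbb{S}}\varphi|^2}\,d\sigma_D$, where $d\sigma_D$ is the standard surface measure on $D\subset\mathbb{S}^{N-1}$.

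Substituting these three ingredients into the formula from Proposition \ref{prop:T_1_general}, the factors $\sqrt{1+|\nabla^{\mathbb{S}}\varphi|^2}$ cancel and the powers of $e^\varphi$ combine to $e^{N\varphi}$, yielding exactly \eqref{eq:T1_radial_graphs}. The only non-routine step is the derivation of $\nu$ and of the area element; everything else is a direct assembly. Note that the vertex $0$ and the lower-dimensional set $\partial\Gamma_\varphi$ contribute nothing to the surface integral, so the mild singularity of $V$ at the origin and of $\Sigma_D$ at the vertex cause no trouble.
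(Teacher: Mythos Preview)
Your proposal is correct and follows essentially the same route as the paper's proof: apply Proposition~\ref{prop:T_1_general}, use $|\nabla u_\varphi|^2=(\partial u_\varphi/\partial\nu)^2$ on $\Gamma_\varphi$, and then pull back to $D$ via the explicit normal and area element of the radial graph. The only difference is that you derive the formulas for $\nu$ and $d\sigma_{\Gamma_\varphi}$ from scratch (level set and metric determinant), whereas the paper simply quotes them from \cite[Sect.~2]{IacopettiPacellaWeth2022}.
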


\begin{proof}
The result follows from Proposition \ref{prop:T_1_general}. Indeed, since the exterior unit normal to $\Gamma_\varphi$ is given by
$$
\nu (x)= \frac{\frac{x}{|x|} - \nabla_{\mathbb{S}^{N - 1}} \varphi \left(\frac{x}{|x|} \right)}{\sqrt{1 + \left|\nabla_{\mathbb{S}^{N - 1}} \varphi \left(\frac{x}{|x|} \right)\right|^2}},\ \quad x\in \Gamma_\varphi,
$$
where $\nabla_{\mathbb{S}^{N - 1}}$ is the gradient in $\mathbb{S}^{N - 1}$ (see \cite[Sect. 2]{IacopettiPacellaWeth2022}), then, from \eqref{eq:defVradial}, it follows that
\begin{equation*}
\langle V, \nu \rangle = \frac{|x|}{\sqrt{1 + \left|\nabla_{\mathbb{S}^{N - 1}} \varphi \left(\frac{x}{|x|} \right)\right|^2}} v \left(\frac{x}{|x|} \right) \quad \text{on $\Gamma_\varphi$}.
\end{equation*}
Hence, using the parametrization $x = e^{\varphi(q)} q$, for $q \in D$, taking into account that the induced $(N-1)$-dimensional area element on $\Gamma_\varphi$ is given by
$$
d \sigma_{\Gamma_\varphi} = e^{(N - 1)\varphi}\sqrt{1 + |\nabla_{\mathbb{S}^{N - 1}} \varphi|^2} \ d \sigma,
$$
and since $u_\varphi=0$ on $\Gamma_\varphi$, then, from \eqref{eq:T_1_general}, we readily obtain \eqref{eq:T1_radial_graphs}.
\end{proof}

The next step is to compute the second derivative of $T$ at $\Omega_\varphi$ with respect to directions $v, w \in C^2(\overline D, \mathbb R)$

\begin{lemma}
\label{lem:T_2_cone}
Let $\varphi$ and $u_\varphi$ be as in Lemma \ref{lem:T1_radial_graphs}. Then for any $v, w \in C^2(\overline D, \mathbb R)$ it holds
\begin{align}
T''(\varphi)[v, w]
& = - \frac{N}{2} \int_D e^{N \varphi} v w \left(\frac{\partial u_\varphi}{\partial \nu}(e^\varphi q) \right)^2 \ d \sigma \nonumber \\
& \quad - \int_D e^{N \varphi} v \frac{\partial u_\varphi}{\partial \nu}(e^\varphi q) \frac{\partial \widetilde u_w}{\partial \nu}(e^\varphi q) \ d \sigma \nonumber \\
& \quad - \int_D e^{N \varphi} v w \frac{\partial u_\varphi}{\partial \nu}(e^\varphi q) (D^2u_\varphi(e^\varphi q) e^\varphi q) \cdot \nu \ d \sigma \nonumber \\
& \quad + \int_D e^{N \varphi} v \frac{\partial u_\varphi}{\partial \nu}(e^\varphi q) \frac{\nabla u_\varphi(e^\varphi q) \cdot \nabla_{\mathbb{S}^{N - 1}} w}{\sqrt{1 + |\nabla_{\mathbb{S}^{N - 1}} \varphi|^2}} \ d \sigma \nonumber \\
& \quad + \int_D e^{N \varphi} \left(\frac{\partial u_\varphi}{\partial \nu}(e^\varphi q) \right)^2 \frac{\nabla_{\mathbb{S}^{N - 1}} \varphi \cdot \nabla_{\mathbb{S}^{N - 1}} w}{1 + |\nabla_{\mathbb{S}^{N - 1}} \varphi|^2} \ d \sigma\label{eq:T2_radial_graphs},
\end{align}
where $\widetilde u_w = \left. \frac{d}{ds}\right|_{s = 0} u_{\varphi + sw}$ satisfies \eqref{eq:equation_for_u_tilde} with $V(x) = w\left(\frac{x}{|x|}\right)x$.
\end{lemma}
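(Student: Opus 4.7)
The plan is to obtain $T''(\varphi)[v,w]$ by differentiating the first derivative formula \eqref{eq:T1_radial_graphs}, applied at the perturbed point $\varphi+sw$, with respect to the parameter $s$ at $s=0$. Setting
\[
T'(\varphi+sw)[v] = -\frac{1}{2}\int_D \left(\frac{\partial u_{\varphi+sw}}{\partial \nu_{\varphi+sw}}(y(s))\right)^2 e^{N(\varphi+sw)(q)}\, v \, d\sigma,
\]
with $y(s) := e^{(\varphi+sw)(q)}\,q \in \Gamma_{\varphi+sw}$, the smoothness of $s\mapsto u_{\varphi+sw}$ granted by Proposition \ref{prop:local_uniqueness}, together with the smoothness of the outward normal to a radial graph as a function of $\varphi$, justifies differentiating under the integral sign.

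The product rule separates the $s$-derivative of the integrand into two blocks. Differentiating the exponential factor $e^{N(\varphi+sw)}$ produces $Nw\,e^{N\varphi}$ and accounts for the first integral on the right-hand side of \eqref{eq:T2_radial_graphs}. The remaining block is $2\frac{\partial u_\varphi}{\partial\nu}(e^\varphi q)\,h'(0)$, where
\[
h(s) := \nabla u_{\varphi+sw}(y(s))\cdot\nu_{\varphi+sw}(y(s)),
\]
and each of the three subterms of $h'(0)$ will account for one (or two) of the remaining integrals in \eqref{eq:T2_radial_graphs}.

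To compute $h'(0)$ I would apply the chain rule to its three $s$-dependent ingredients. First, the function $u_{\varphi+sw}$ itself: its $s$-derivative at a fixed interior point is $\widetilde u_w$ as in \eqref{eq:def_u_tilde}, so this piece contributes $\frac{\partial \widetilde u_w}{\partial \nu}(e^\varphi q)$ and, after multiplying by the common factor, produces the second integral of \eqref{eq:T2_radial_graphs}. Second, the moving evaluation point $y(s)$ has velocity $y'(0)=w(q)\,e^{\varphi(q)}q$, and differentiating $\nabla u_{\varphi+sw}$ through this point produces the Hessian contribution $(D^2u_\varphi(e^\varphi q)\,e^\varphi q)\cdot\nu$, i.e.\ the third integral. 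Third, the unit normal $\nu_{\varphi+sw}(y(s))$: here I would invoke the explicit representation
\[
\nu_{\varphi+sw}(y(s)) = \frac{q-\nabla_{\mathbb{S}^{N-1}}(\varphi+sw)(q)}{\sqrt{1+|\nabla_{\mathbb{S}^{N-1}}(\varphi+sw)(q)|^2}}
\]
recalled in the proof of Lemma \ref{lem:T1_radial_graphs}, and differentiate it by the quotient rule. This yields two contributions: one from the numerator, giving $-\nabla_{\mathbb{S}^{N-1}} w/\sqrt{1+|\nabla_{\mathbb{S}^{N-1}}\varphi|^2}$, and one from the denominator, giving a factor involving $\nabla_{\mathbb{S}^{N-1}}\varphi\cdot\nabla_{\mathbb{S}^{N-1}}w$. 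Taking the inner product with $\nabla u_\varphi(e^\varphi q)$ produces precisely the last two integrals of \eqref{eq:T2_radial_graphs}.

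The main obstacle is the third subterm: both the evaluation point $y(s)$ and the normal vector $\nu_{\varphi+sw}$ depend on $s$ through $\varphi+sw$, and neither dependence admits a simplifying shortcut before the three pieces are expanded, so the chain-rule bookkeeping must be done carefully and the two contributions from the quotient rule tracked separately. Once $h'(0)$ has been assembled, the remainder is routine: multiply the three subterms by the common factor $-v\,e^{N\varphi}\,\frac{\partial u_\varphi}{\partial\nu}(e^\varphi q)$, add the exponential contribution, and collect signs to recover \eqref{eq:T2_radial_graphs}.
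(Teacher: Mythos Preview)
Your approach is correct and is essentially the same as the one the paper uses: the paper omits the proof and refers to \cite[Lemma 3.2]{IacopettiPacellaWeth2022}, but the identical computation is carried out in full for the cylinder case in Lemma~\ref{lem:T2_cartesian}, and it proceeds exactly as you describe---differentiate the first-variation formula at $\varphi+sw$ with respect to $s$, split via the product rule into the exponential factor and the squared normal derivative, and expand the latter via the chain rule into the $\widetilde u_w$, Hessian, and $\partial_s\nu$ contributions. One minor technical point you glide over is the interchange $\frac{d}{ds}\nabla u_{\varphi+sw}=\nabla\widetilde u_w$, which the paper explicitly justifies by citing \cite[Lemma 3.2]{IacopettiPacellaWeth2022}; you may want to flag this.
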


\begin{proof}
The proof is the same as that of \cite[Lemma 3.2]{IacopettiPacellaWeth2022} and therefore we omit it.
\end{proof}

In view of Definition \ref{def:energy_stationary}, we are interested in studying pairs $(\Omega_\varphi, u_\varphi)$ which are energy-stationary under a volume constraint. Thus we need to consider domains $\Omega_\varphi$ with a fixed volume. We recall that the volume of the domain defined by the radial graph of a function $\varphi \in C^2(\overline D, \mathbb R)$ is given by
\begin{equation*}
\mathcal V(\varphi) \coloneqq \mathcal V(\Omega_\varphi) = |\Omega_\varphi| = \frac{1}{N} \int_D e^{N\varphi} \ d\sigma.
\end{equation*}
Simple computations yield, for $v, w \in C^2(\overline D, \mathbb R)$:
\begin{equation}
\label{eq:volume_derivatives}
\mathcal V'(\varphi)[v] = \int_D e^{N \varphi} v\, \, d\sigma, \qquad \mathcal V''(\varphi)[v, w] = N \int_D e^{N\varphi} v w \ d \sigma.
\end{equation}
Then, for $c > 0$ we define the manifold 
\begin{equation}
\label{eq:def_M}
M \coloneqq \{\varphi \in C^2(\overline D, \mathbb R) \ : \ \mathcal V(\varphi) = c\},
\end{equation}
whose tangent space at any point $\varphi \in M$ is given by
\begin{equation*}
T_\varphi M = \left\{v \in C^2(\overline D, \mathbb R) \ : \ \int_D e^{N \varphi} v \ d\sigma = 0 \right\}.
\end{equation*}
We restrict the energy functional to the manifold $M$ and denote it by
\begin{equation*}
I(\varphi) := T\big|_M(\varphi).
\end{equation*}
Clearly, if the pair $(\Omega_\varphi, u_\varphi)$ is energy-stationary under a volume constraint, in the sense of Definition \ref{def:energy_stationary}, then $\varphi \in M$ is a critical point of $I$. Hence, by the Theorem of Lagrange multipliers, there exists $\mu \in \mathbb R$ such that
\begin{equation}
\label{eq:Lagrange_multiplier}
T'(\varphi) = \mu \mathcal V'(\varphi).
\end{equation}
Moreover, the following result holds true:
\begin{prop}
\label{prop:Lagrange_multiplier_is_negative}
Let $\varphi \in M$ such that $(\Omega_\varphi, u_\varphi)$ is energy-stationary under the volume constraint. Then the Lagrange multiplier $\mu$ is negative and
\begin{equation}
\frac{\partial u_\varphi}{\partial \nu} = - \sqrt{- 2 \mu} \quad \text{ on } \quad \Gamma_\varphi.
\end{equation} 
\end{prop}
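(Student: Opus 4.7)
The plan is to combine the characterization of energy-stationary pairs already established in Proposition \ref{prop:energy_stationary_char} with the Lagrange multiplier identity \eqref{eq:Lagrange_multiplier}, and then to use Hopf's boundary point lemma to pin down the sign of the normal derivative.

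First I would invoke Proposition \ref{prop:energy_stationary_char}: since $(\Omega_\varphi, u_\varphi)$ is energy-stationary under the volume constraint, $|\nabla u_\varphi|$ is constant on $\Gamma_\varphi$. Because $u_\varphi$ vanishes on the smooth relative boundary $\Gamma_\varphi$, the gradient there is purely normal, so $\frac{\partial u_\varphi}{\partial \nu}$ itself is constant on $\Gamma_\varphi$; call this constant $-c$ with $c \in \mathbb{R}$. Next I would apply the Hopf boundary point lemma to the smooth relative boundary $\Gamma_\varphi$: since $u_\varphi > 0$ inside $\Omega_\varphi$, $u_\varphi = 0$ on $\Gamma_\varphi$, and $u_\varphi$ satisfies a linear elliptic equation $-\Delta u_\varphi - f'(\xi) u_\varphi = f(0)$ (or one can rewrite $-\Delta u_\varphi + c(x) u_\varphi \geq 0$ for a bounded $c$), we conclude that the outward normal derivative is strictly negative, i.e. $c > 0$.

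Then I would plug this information into the Lagrange multiplier identity $T'(\varphi) = \mu \mathcal V'(\varphi)$. Using Lemma \ref{lem:T1_radial_graphs} and the formula for $\mathcal V'(\varphi)$ in \eqref{eq:volume_derivatives}, for every $v \in C^2(\overline D, \mathbb R)$ this reads
\begin{equation*}
-\frac{1}{2}\int_D \Bigl(\frac{\partial u_\varphi}{\partial \nu}(e^\varphi q)\Bigr)^2 e^{N\varphi} v \,d\sigma = \mu \int_D e^{N\varphi} v \,d\sigma.
\end{equation*}
Substituting $\frac{\partial u_\varphi}{\partial \nu} = -c$ yields $-\tfrac{c^2}{2}\int_D e^{N\varphi} v\,d\sigma = \mu \int_D e^{N\varphi} v\,d\sigma$ for every $v$, whence $\mu = -c^2/2 < 0$ and $c = \sqrt{-2\mu}$, giving $\frac{\partial u_\varphi}{\partial \nu} = -\sqrt{-2\mu}$ on $\Gamma_\varphi$.

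There is essentially no serious obstacle here: the only subtle point is justifying the strict inequality $c > 0$, which requires that $\Gamma_\varphi$ satisfy an interior ball condition so Hopf's lemma applies — and this is guaranteed by the smoothness assumption on the relative boundary in our admissible class. The constancy of $|\nabla u_\varphi|$ coming from Proposition \ref{prop:energy_stationary_char} takes care of the rest, and the identification of $\mu$ is purely algebraic once $v$ is allowed to range over a dense set of test functions on $D$.
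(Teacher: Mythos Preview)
Your argument is correct and is precisely the natural proof; in the paper the authors simply refer to \cite[Lemma 4.1]{IacopettiPacellaWeth2022} rather than writing out the details, and what you have written is the argument one would find there. The only point worth noting is that the paper itself invokes Hopf's lemma later (see \eqref{eq:mu_zero} and the line preceding it) without further comment, so your level of justification is consistent with the paper's conventions.
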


\begin{proof}
The proof is the same as in \cite[Lemma 4.1]{IacopettiPacellaWeth2022}
\end{proof}

For the second derivative of $I$ we have
\begin{lemma}
\label{lem:Lagrange_multipliers_second_derivatives}
Let $\varphi \in M$ and let $v, w \in T_\varphi M$. If $(\Omega_\varphi, u_\varphi)$ is energy-stationary under the volume constraint, then 
\begin{equation}
\label{eq:Lagrange_multipliers_second_derivatives}
I''(\varphi)[v, w] = T''(\varphi)[v, w] - \mu \mathcal V''(\varphi)[v, w].
\end{equation}
\end{lemma}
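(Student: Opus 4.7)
The proof is the standard Lagrange multiplier computation for the Hessian of a constrained functional at a critical point. The plan is to evaluate $I''(\varphi)$ along smooth curves in $M$ through $\varphi$ and use the Lagrange relation \eqref{eq:Lagrange_multiplier} to rewrite the error term.

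First I would observe that $M$ is a smooth codimension-one submanifold of $C^2(\overline D,\mathbb R)$ in a neighborhood of $\varphi$. Indeed
$$
\mathcal V'(\varphi)[1] = \int_D e^{N\varphi}\,d\sigma > 0,
$$
so the differential of $\mathcal V$ is surjective at every point of $M$ and the implicit function theorem in Banach spaces applies. In particular, given any $v \in T_\varphi M$, there exists a smooth curve $t\mapsto \varphi_t \in M$, defined for $t$ in a small interval around $0$, with $\varphi_0 = \varphi$ and $\dot\varphi_0 = v$.

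Next I would fix such a $v$ and such a curve $\varphi_t$. Since $I(\varphi_t) = T(\varphi_t)$ for all small $t$, the chain rule gives
$$
\frac{d^2}{dt^2}\bigg|_{t=0} I(\varphi_t) = T''(\varphi)[v,v] + T'(\varphi)[\ddot\varphi_0].
$$
On the other hand, $\mathcal V(\varphi_t) \equiv c$, so differentiating twice at $t=0$ yields
$$
\mathcal V''(\varphi)[v,v] + \mathcal V'(\varphi)[\ddot\varphi_0] = 0.
$$
Using \eqref{eq:Lagrange_multiplier}, namely $T'(\varphi) = \mu\,\mathcal V'(\varphi)$, the correction term is
$$
T'(\varphi)[\ddot\varphi_0] \;=\; \mu\,\mathcal V'(\varphi)[\ddot\varphi_0] \;=\; -\mu\,\mathcal V''(\varphi)[v,v],
$$
so we obtain the diagonal version of the claim:
$$
I''(\varphi)[v,v] \;=\; T''(\varphi)[v,v] - \mu\,\mathcal V''(\varphi)[v,v].
$$

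Finally I would polarize. Both $T''(\varphi)$ and $\mathcal V''(\varphi)$ are symmetric bilinear forms (as is $I''(\varphi)$ on $T_\varphi M$, since $\varphi$ is a critical point of $I$), so the identity on the diagonal propagates to every pair $v,w\in T_\varphi M$ by the formula $B(v,w) = \tfrac12\bigl(B(v+w,v+w) - B(v,v) - B(w,w)\bigr)$. The only subtle point — and the one deserving of care — is that $I''(\varphi)$ should be a well-defined symmetric bilinear form on $T_\varphi M$, independent of the curve used to compute it. This is guaranteed precisely because $\varphi$ is a critical point of $I$: any term depending on $\ddot\varphi_0$ is absorbed by the Lagrange identity, as the computation above makes explicit. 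Hence no genuine obstacle arises and the statement follows.
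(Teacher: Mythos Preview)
Your argument is correct and is exactly the standard Lagrange-multiplier computation for the constrained Hessian. The paper does not spell out a proof here but simply refers to \cite[Lemma 4.3]{IacopettiPacellaWeth2022}, where the same curve-in-$M$ / chain-rule / polarization argument is carried out; so your approach coincides with the intended one.
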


\begin{proof}
The proof is the same as in \cite[Lemma 4.3]{IacopettiPacellaWeth2022}.
\end{proof}

\subsection{Spherical sectors and radial solutions}

Given a cone $\Sigma_D$ we consider the spherical sector $\Omega_D$ obtained by intersecting $\Sigma_D$ with the unit ball $B_1$. Obviously its relative boundary $\Gamma_{\Omega_D}$ is the radial graph obtained by taking $\varphi \equiv 0$ in \eqref{eq:radial_graph}, which is in fact the domain $D$ which spans the cone, that is $\Gamma_{\Omega_D} = D$.

In the spherical sector $\Omega_D$ we would like to consider a nondegenerate positive radial solution $u_D \coloneqq u_{\Omega_D}$ of \eqref{eq:pde_cone_starshaped}, hence we first recall conditions on the nonlinearity $f$ which ensure that a positive radial solution of \eqref{eq:pde_cone_starshaped} in $\Omega_D$ exists. Observe that such $u_D$ is just the restriction to $\Omega_D$ of a positive radial solution of the Dirichlet problem
\begin{equation}
\label{eq:Dirichlet_problem_in_the_ball}
\left\{
\begin{array}{rcll}
- \Delta u & = & f(u) & \quad \text{ in } B_1\\[3pt]
u & = & 0 & \quad \text{ on } \partial B_1
\end{array}
\right.
\end{equation}

\begin{prop}
\label{prop:existence_radial_sector}
Let $f : \mathbb R \to \mathbb R$ be a locally Lipschitz continuous function. Assume that $f$ satisfies one of the following:
\begin{enumerate}
\item $|f(s)| \leq a|s| + b$ for all $s > 0$, where $b > 0$ and $a \in (0, \mu_1(B_1))$, where $\mu_1(B_1)$ is the first eigenvalue of the operator $- \Delta$ in $H_0^1(B_1)$.

\item $f:[0, + \infty) \to [0, + \infty)$ is non-increasing.

\item
		\begin{itemize}
		\item $|f(s)| < c|s|^p + d$, where $c, d > 0$ and $p \in \left(1, \frac{N + 2}{N - 2} \right)$ if $N \geq 3$, $p > 1$ if $N = 2$;
		
		\item $f(s) = o(s)$ as $s \to 0$;
		
		\item There exist $\gamma > 2$, $\kappa > 0$ such that for $|s| > \kappa$ it holds
		$$
		0 < \gamma F(s) < s f(s);
		$$
		
		\item $\displaystyle f'(s) > \frac{f(s)}{s}$ for all $s > 0$.
		\end{itemize}
\end{enumerate}
Then a radial positive solution of \eqref{eq:Dirichlet_problem_in_the_ball} in $B_1$, and hence of \eqref{eq:pde_cone_starshaped} in $\Omega_D$, exists.
\end{prop}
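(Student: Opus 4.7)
The proposition collects three classical existence results for positive solutions of the Dirichlet problem on $B_1$, each under a different standard hypothesis set. Since a positive radial solution of \eqref{eq:Dirichlet_problem_in_the_ball} restricts to a positive solution of \eqref{eq:pde_cone_starshaped} on the spherical sector $\Omega_D$, the plan is to work throughout on $B_1$, in the closed radial subspace $H^1_{0,\mathrm{rad}}(B_1)\subset H^1_0(B_1)$. A device common to all three cases is to replace $f(s)$ by the truncation $\tilde f(s) := f(s_+)$: non-negative solutions of the modified problem coincide with those of the original one, and non-negativity of the constructed solution is then enforced either by the strong maximum principle or by the sub-/super-solution scheme.

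For case \textit{(i)}, the bound $|f(s)|\le a|s|+b$ with $a<\mu_1(B_1)$ together with the Poincar\'e inequality gives
$\int_{B_1} F(u)\,dx \le \tfrac{a}{2\mu_1(B_1)}\|\nabla u\|_2^2 + C\|\nabla u\|_2$,
so $J$ is coercive and bounded below on $H^1_0(B_1)$, hence on its closed radial subspace. The direct method produces a radial minimizer $u_D$; non-triviality follows by testing $J$ on a small positive radial bump, using $b>0$ to make the energy strictly negative. For case \textit{(ii)}, with $\tilde f\ge 0$ non-increasing, the sub- and super-solution method applies directly: $u\equiv 0$ is a sub-solution and the torsion-type function solving $-\Delta \bar u = \tilde f(0)$ in $B_1$ with zero Dirichlet data is a super-solution. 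To iterate one introduces the shifted operator $-\Delta+\alpha\,\mathrm{Id}$ with $\alpha$ large enough that $s\mapsto \tilde f(s)+\alpha s$ is non-decreasing on the range $[0,\|\bar u\|_\infty]$ (possible by the local Lipschitz regularity of $f$); the corresponding monotone scheme started at $u^0=0$ produces an increasing sequence converging in $H^1_{0,\mathrm{rad}}(B_1)$ to a radial solution. For case \textit{(iii)}, the subcritical growth, the condition $f(s)=o(s)$ at $0$, and the Ambrosetti-Rabinowitz condition $0<\gamma F(s)<sf(s)$ for $|s|>\kappa$ are the standard Mountain Pass hypotheses; the compact embedding $H^1_{0,\mathrm{rad}}(B_1)\hookrightarrow L^{p+1}$ in the subcritical range makes Palais-Smale sequences precompact, and the Mountain Pass Theorem, applied to the radial energy, yields a non-trivial radial critical point. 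The final assumption $f'(s)>f(s)/s$ ensures that $s\mapsto f(s)/s$ is strictly increasing, so that the Nehari manifold is a $C^1$ manifold of least-energy critical points, which is convenient but not strictly needed for existence.

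In each of the three constructions, $u_D$ is then a non-negative, non-trivial radial weak solution, and the local Lipschitz regularity of $f$ together with standard elliptic regularity gives $u_D\in C^{2,\alpha}$; the strong maximum principle upgrades non-negativity to strict positivity. The main obstacle is not a deep one but concentrates on the non-triviality step in each case: in \textit{(i)} and \textit{(iii)} it is built into the variational setup (energy comparison for the minimizer, mountain-pass geometry for the saddle), while in \textit{(ii)} it implicitly requires $\tilde f(0)>0$, which is unavoidable if a positive solution is to exist since a non-increasing non-negative $f$ with $f(0)=0$ forces $f\equiv 0$ and only the trivial solution.
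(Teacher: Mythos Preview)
Your argument is correct, but it diverges from the paper's own proof in two places. For case \textit{(ii)} the paper does not use sub- and super-solutions; it observes that when $f$ is non-increasing the primitive $F$ is concave, so $\int_{B_1}F(u)\,dx$ grows at most linearly in $\|\nabla u\|_2$ and $J$ is again coercive and weakly lower semicontinuous on $H^1_{0,\mathrm{rad}}(B_1)$, exactly as in case \textit{(i)}. Your monotone iteration works equally well and has the advantage of making the positivity and non-triviality issues explicit (you correctly isolate the implicit requirement $f(0)>0$), but the paper simply treats \textit{(i)} and \textit{(ii)} by the same direct-method argument. For case \textit{(iii)} the paper does not run the Mountain Pass in the radial subspace; it applies the Mountain Pass (or Nehari minimisation) in the full space $H^1_0(B_1)$ to obtain a positive solution and then invokes the Gidas--Ni--Nirenberg theorem to conclude that this solution is radial. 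Your route via $H^1_{0,\mathrm{rad}}$ and the principle of symmetric criticality is equally valid and avoids the symmetry result, at the cost of an implicit appeal to Palais' principle; the paper's route buys radiality for free from positivity but needs the extra citation. In both variants the remaining steps (truncation, regularity, strong maximum principle) are as you describe.
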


\begin{proof}
In cases \textit{(i)} and \textit{(ii)}, the corresponding functional 
$$J(u) = \frac{1}{2}\int_{B_1} |\nabla u|^2 \ dx - \int_{B_1} F(u) \ dx
$$ 
is coercive and weakly lower semicontinuous in the space $H^1_{0, rad}(B_1)$, which is the subspace of $H_0^1(B_1)$ of radial functions, and so it has a minimum which is a solution of \eqref{eq:Dirichlet_problem_in_the_ball}. In the case \textit{(iii)} standard variational methods, such as minimization on the Nehari manifold or Mountain Pass type theorems give a positive solution of \eqref{eq:Dirichlet_problem_in_the_ball}, which is then radial by the Gidas-Ni-Nirenberg Theorem (see \cite{GidasNiNirenberg1979}). We refer to \cite{AmbrosettiMalchiodi2007} and \cite{DamascelliPacella2019} for the details.
\end{proof}

We point out that a radial solution $u_D$ is always a classical solution of \eqref{eq:Dirichlet_problem_in_the_ball} in $B_1$, and hence in $\Omega_D$. In particular, $u_D$ is bounded and $u_D \in C^2(\overline B_1)$

Now we would like to study the nondegeneracy of a radial solution $u_D$ of \eqref{eq:pde_cone_starshaped} in $\Omega_D$.

As recalled in Section \ref{subsec:nondegenerate_solutions}, we need conditions that ensure that zero is not an eigenvalue of the linearized operator
\begin{equation}
\label{eq:linearized_operator_radial_sector}
L_{u_D} = - \Delta - f'(u_D)
\end{equation}
in the space $H_0^1(\Omega_D \cup \Gamma_{1, 0})$, where $\Gamma_{1, 0} = \partial \Omega_D\setminus\overline{\Gamma}_{\Omega_D}$. Obviously, if the linearized operator $L_{u_D}$ admits only positive eigenvalues, then $u_D$ is nondegenerate. This is the case of stable solutions of \eqref{eq:pde_cone_starshaped}, which occur when $f$ satisfies conditions \textit{(i)} or \textit{(ii)} in Proposition \ref{prop:existence_radial_sector}, in particular, if $f$ is a constant. 

In general, $L_{u_D}$ could have negative eigenvalues, so to detect the nondegeneracy of $u_D$ we have to analyze the spectrum of the linear operator \eqref{eq:linearized_operator_radial_sector} in $H_0^1(\Omega_D \cup \Gamma_{1, 0})$. As we will see, the fact that $\Omega_D$ is a spherical sector in the cone $\Sigma_D$ (and not the ball $B_1$) plays a role.

The first remark is that zero is an eigenvalue for $L_{u_D}$ if and only if it is an eigenvalue for the following singular problem:
\begin{equation}
\label{eq:singular_eigenvalue_problem_radial_sector}
\left\{
\begin{array}{rcll}
- \Delta \psi - f'(u_D) \psi & = & \displaystyle \frac{\widehat \Lambda}{|x|^2} \psi & \quad \text{ in } \Omega_D \\[11pt]
\psi & = & 0 & \quad \text{ on } D \\[4pt]
\displaystyle \frac{\partial \psi}{\partial \nu} & = & 0 & \quad \text{ on } \Gamma_{1, 0} \setminus \{0\}.
\end{array}
\right.
\end{equation}

Therefore we investigate the eigenvalues of \eqref{eq:singular_eigenvalue_problem_radial_sector}. The advantage of considering this singular eigenvalue problem is that, since $u_D$ is radial, its eigenfunctions can be obtained by separation of variables, using polar coordinates in $\mathbb R^N$. To this aim we denote by $\{\lambda_j(D)\}_{j \in \mathbb N}$, the eigenvalues of the Laplace-Beltrami operator $- \Delta_{\mathbb{S}^{N - 1}}$ on the domain $D$ with Neumann boundary conditions. It is well-known that 
\begin{equation}
\label{eq:Neumann_eigenvalues_on_D}
0 = \lambda_0(D) < \lambda_1(D) \leq \lambda_2(D) \leq \ldots,
\end{equation}
and the only accumulation point is $+ \infty$. Then we consider the following singular eigenvalue problem in the interval $(0, 1)$:
\begin{equation}
\label{eq:singular_eigenvalue_problem_interval_cone}
\left\{
\begin{array}{rcll}
- z'' - \frac{N - 1}{r} z' - f'(u_D) z & = & \displaystyle \frac{\widehat \nu}{r^2} z & \quad \text{ in } (0, 1) \\[8pt]
z(1) & = & 0 &
\end{array}
\right.
\end{equation}
It is shown in \cite{AmadoriGladiali2020} (see also \cite{CiraoloPacellaPolvara2023}) that nonpositive eigenvalues for \eqref{eq:singular_eigenvalue_problem_interval_cone} can be defined. They are a finite number and we denote them by $\widehat \nu_i$, $i = 1, \ldots, k$. It is immediate to check that the eigenvalues $\widehat \nu_i$ are the eigenvalues of \eqref{eq:singular_eigenvalue_problem_radial_sector} which correspond to radial eigenfunctions. In particular, we consider the first eigenvalue $\widehat \nu_1$ of \eqref{eq:singular_eigenvalue_problem_interval_cone}, referring to \cite{AmadoriGladiali2020} for a variational definition and a study of its main properties.

By using \eqref{eq:singular_eigenvalue_problem_radial_sector}-\eqref{eq:singular_eigenvalue_problem_interval_cone} we obtain the following result:

\begin{prop}
\label{prop:sum_of_eigenvalues_cone}
The problem \eqref{eq:singular_eigenvalue_problem_radial_sector} admits zero as an eigenvalue if and only if there exist $i\in \mathbb{N}^+$ and $j\in\mathbb N$ such that
\begin{equation}
\label{eq:sum_eigenvalues_cone}
\widehat \nu_i + \lambda_j(D) = 0.
\end{equation}
\end{prop}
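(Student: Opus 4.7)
The plan is to exploit the radial symmetry of $u_D$ and decouple the problem \eqref{eq:singular_eigenvalue_problem_radial_sector} by separation of variables in polar coordinates. Since $f'(u_D)$ depends only on $r = |x|$, and the factor $1/|x|^2$ multiplying the spectral parameter is also radial, the operator on the left-hand side of \eqref{eq:singular_eigenvalue_problem_radial_sector} commutes with rotations preserving $D$. Writing $x = rq$, with $r > 0$ and $q \in \mathbb{S}^{N-1}$, and recalling that in polar coordinates
\begin{equation*}
\Delta = \partial_r^2 + \frac{N-1}{r}\partial_r + \frac{1}{r^2}\Delta_{\mathbb{S}^{N-1}},
\end{equation*}
I would look for eigenfunctions of the form $\psi(x) = z(r)Y(q)$, where $Y$ is an eigenfunction of $-\Delta_{\mathbb{S}^{N-1}}$ on $D$ with \emph{Neumann} boundary conditions on $\partial D$ (to match the relative Neumann condition on $\Gamma_{1,0}\setminus\{0\}$, which in polar coordinates reduces precisely to $\partial_\eta Y = 0$ on $\partial D$, where $\eta$ is the outward conormal to $\partial D$ in $\mathbb{S}^{N-1}$).

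Plugging $\psi = z(r)Y(q)$ into \eqref{eq:singular_eigenvalue_problem_radial_sector} and using $-\Delta_{\mathbb{S}^{N-1}}Y = \lambda_j(D)Y$ yields, after dividing by $Y$,
\begin{equation*}
-z'' - \frac{N-1}{r}z' - f'(u_D)z = \frac{\widehat\Lambda - \lambda_j(D)}{r^2}z \quad \text{in } (0,1),
\end{equation*}
together with the Dirichlet boundary condition $z(1) = 0$ coming from $\psi = 0$ on $D$. Comparing with \eqref{eq:singular_eigenvalue_problem_interval_cone}, this is exactly the eigenvalue equation with parameter $\widehat\nu = \widehat\Lambda - \lambda_j(D)$. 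In particular, zero is an eigenvalue of \eqref{eq:singular_eigenvalue_problem_radial_sector} (corresponding to the eigenfunction $\psi = z Y$) if and only if the spectral parameter $\widehat\nu = -\lambda_j(D)$ belongs to the spectrum of \eqref{eq:singular_eigenvalue_problem_interval_cone}, i.e.\ $\widehat\nu_i + \lambda_j(D) = 0$ for some $i$ and $j$. This proves the ``if'' direction and provides every eigenfunction of product form.

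For the ``only if'' direction one needs completeness: any eigenfunction $\psi$ associated to the eigenvalue $0$ must be a (possibly finite, possibly infinite) sum of such separated pieces, so that at least one of them is nonzero and yields a relation $\widehat\nu_i + \lambda_j(D) = 0$. The standard way to do this is to fix $\psi$ and expand it in the complete orthonormal basis $\{Y_j\}_{j\in\mathbb{N}}$ of $L^2(D)$ consisting of Neumann eigenfunctions of $-\Delta_{\mathbb{S}^{N-1}}$ on $D$, by setting
\begin{equation*}
z_j(r) = \int_D \psi(rq)Y_j(q)\, d\sigma(q).
\end{equation*}
Multiplying the equation satisfied by $\psi$ by $Y_j$ and integrating over $D$, using the Neumann condition on $\partial D$ (which, via integration by parts on $D$, converts $\int_D (-\Delta_{\mathbb{S}^{N-1}}\psi)Y_j\,d\sigma$ into $\lambda_j(D)\int_D \psi Y_j\,d\sigma$), shows that each $z_j$ satisfies
\begin{equation*}
-z_j'' - \frac{N-1}{r}z_j' - f'(u_D)z_j = \frac{-\lambda_j(D)}{r^2}z_j, \quad z_j(1) = 0.
\end{equation*}
Since $\psi\not\equiv 0$, some $z_j$ is nonzero and is therefore an eigenfunction of \eqref{eq:singular_eigenvalue_problem_interval_cone} with eigenvalue $\widehat\nu = -\lambda_j(D)$, giving the required pair $(i,j)$.

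The main technical obstacle I expect is the completeness step, and specifically handling the singularity at $r=0$ and the boundary $\partial D$ when integrating by parts in the spherical variable. One must ensure that $\psi$ has enough regularity/decay near the vertex so that the expansion $\psi = \sum z_j(r)Y_j(q)$ converges in the appropriate Sobolev sense and that integration by parts on $D$ is justified with the Neumann condition on $\partial D$. This is taken care of by working in the natural weighted Sobolev framework in which \eqref{eq:singular_eigenvalue_problem_radial_sector} and \eqref{eq:singular_eigenvalue_problem_interval_cone} are set in \cite{AmadoriGladiali2020, CiraoloPacellaPolvara2023}, where the trace on $\partial D$ and the behavior at $0$ of both $\psi$ and $Y_j$ are properly controlled; otherwise the argument is completely algebraic and follows by matching spectral parameters.
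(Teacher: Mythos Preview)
Your proposal is correct and is precisely the separation-of-variables argument underlying the result: the paper itself does not give a detailed proof but simply invokes \cite[Proposition~2.6]{CiraoloPacellaPolvara2023}, where it is shown that the nonpositive eigenvalues of \eqref{eq:singular_eigenvalue_problem_radial_sector} are exactly the sums $\widehat\nu_i + \lambda_j(D)$, via the decomposition you describe. Your treatment of both directions (product eigenfunctions for ``if'', projection onto the Neumann basis $\{Y_j\}$ for ``only if'') and your acknowledgment that the singularity at $r=0$ requires the weighted Sobolev framework of \cite{AmadoriGladiali2020,CiraoloPacellaPolvara2023} match what is needed; note in particular that since $\lambda_j(D)\ge 0$, the value $-\lambda_j(D)$ is automatically nonpositive, so it lies among the finitely many $\widehat\nu_i$ whenever the corresponding $z_j$ is nontrivial in that space.
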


\begin{proof}
The proof follows by \cite[Proposition 2.6]{CiraoloPacellaPolvara2023}, where it is proved that the nonpositive eigenvalues of \eqref{eq:singular_eigenvalue_problem_radial_sector} are obtained by summing the eigenvalues of the one-dimensional problem \eqref{eq:singular_eigenvalue_problem_interval_cone} and the Neumann eigenvalues of $- \Delta_{\mathbb{S}^{N - 1}}$ on $D$. We refer also to \cite{DemarchisIanniPacella2017} for another approach, which consists in approximating the ball by annuli in order to avoid the singularity at $0$.
\end{proof}

From Proposition \ref{prop:sum_of_eigenvalues_cone} we get the following sufficient condition for a radial solution $u_D$ to be nondegenerate.

\begin{cor}
\label{cor:nondegeneracy_cone}
A radial solution $u_D$ of \eqref{eq:pde_cone_starshaped} in $\Omega_D$ (i.e. for $\varphi=0$) is nondegenerate if both the following conditions are satisfied:
\begin{enumerate}[(I)]
\item the eigenvalue problem \eqref{eq:singular_eigenvalue_problem_interval_cone} does not admit zero as an eigenvalue;

\item $\lambda_1(D) > - \widehat \nu_1$. 
\end{enumerate}
\end{cor}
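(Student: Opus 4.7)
The plan is to combine the spectral characterization of degeneracy given earlier in the section with Proposition \ref{prop:sum_of_eigenvalues_cone}, and then to show that hypotheses (I) and (II) together rule out every possible way in which zero could appear as an eigenvalue of the singular eigenvalue problem \eqref{eq:singular_eigenvalue_problem_radial_sector}. Since nondegeneracy of $u_D$ is equivalent to zero not being an eigenvalue of $L_{u_D}$ in $H_0^1(\Omega_D\cup\Gamma_{1,0})$, and by the first remark made in this subsection this is in turn equivalent to zero not being an eigenvalue of \eqref{eq:singular_eigenvalue_problem_radial_sector}, it is enough to prove that the identity $\widehat\nu_i+\lambda_j(D)=0$ fails for every pair $(i,j)\in\mathbb N^+\times\mathbb N$.

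First, I would handle the radial modes $j=0$. Since $\lambda_0(D)=0$, the condition $\widehat\nu_i+\lambda_0(D)=0$ reduces to $\widehat\nu_i=0$ for some $i\in\{1,\dots,k\}$, which is precisely what hypothesis (I) forbids. Next, for $j\geq 1$ I would use the ordering $\widehat\nu_1\leq \widehat\nu_2\leq\dots\leq \widehat\nu_k\leq 0$ of the nonpositive eigenvalues of the one-dimensional singular problem \eqref{eq:singular_eigenvalue_problem_interval_cone}, together with $\lambda_j(D)\geq \lambda_1(D)$ from \eqref{eq:Neumann_eigenvalues_on_D}. Hypothesis (II) yields
\begin{equation*}
\widehat\nu_i+\lambda_j(D)\;\geq\;\widehat\nu_1+\lambda_1(D)\;>\;0,
\end{equation*}
so in particular the sum cannot vanish. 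Combining the two cases, no pair $(i,j)$ satisfies \eqref{eq:sum_eigenvalues_cone}, and Proposition \ref{prop:sum_of_eigenvalues_cone} then yields the desired nondegeneracy.

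There is essentially no serious obstacle here once Proposition \ref{prop:sum_of_eigenvalues_cone} and the equivalence between $L_{u_D}$ having zero as an eigenvalue and the singular problem \eqref{eq:singular_eigenvalue_problem_radial_sector} having zero as an eigenvalue are available. The only point that requires mild care is making sure that one uses the correct ordering convention for $\widehat\nu_1$ (as the \emph{smallest} of the finitely many nonpositive eigenvalues of \eqref{eq:singular_eigenvalue_problem_interval_cone}), which is what makes condition (II) the sharpest choice for controlling all the mixed modes $(i,j)$ with $j\geq 1$ at once.
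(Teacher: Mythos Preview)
Your proof is correct and follows essentially the same approach as the paper: both split into the case $j=0$ (handled by hypothesis (I)) and $j\geq 1$ (handled by the ordering of the $\widehat\nu_i$ and $\lambda_j(D)$ together with hypothesis (II)), and then invoke Proposition~\ref{prop:sum_of_eigenvalues_cone} and the equivalence between degeneracy of $u_D$ and zero being an eigenvalue of \eqref{eq:singular_eigenvalue_problem_radial_sector}. Your write-up is in fact slightly more explicit about the inequality chain $\widehat\nu_i+\lambda_j(D)\geq \widehat\nu_1+\lambda_1(D)>0$ than the paper's version.
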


\begin{proof}
From Condition \textit{(I)} we have
\begin{equation}\label{eq:condnui}
 \widehat \nu_i \neq 0 \quad \forall i\in \mathbb{N}^+,
 \end{equation}
which means that zero is not an eigenvalue of \eqref{eq:singular_eigenvalue_problem_radial_sector} with a corresponding radial eigenfunction. This, in turn, is equivalent to saying that zero is not a ``radial" eigenvalue of the linearized operator \eqref{eq:linearized_operator_radial_sector}, i.e., $u_D$ is a radial solution of \eqref{eq:pde_cone_starshaped} in $\Omega_D$ (or of \eqref{eq:Dirichlet_problem_in_the_ball} in $B_1$) which is nondegenerate in the subspace $H_{0, rad}^1(\Omega_D \cup \Gamma_{1, 0})$, which is the subspace of $H_{0}^1(\Omega_D \cup \Gamma_{1, 0})$  given by radial functions. 

Now, since $\lambda_0(D) = 0$, $\lambda_1(D) > 0$ and since $\widehat \nu_1$ is the smallest eigenvalue of \eqref{eq:singular_eigenvalue_problem_interval_cone}, then, from Condition \textit{(II)} and \eqref{eq:condnui} we infer that the sum \eqref{eq:sum_eigenvalues_cone} can never be zero. Hence, thanks to Proposition \ref{prop:sum_of_eigenvalues_cone}, we have that zero is not an eigenvalue of \eqref{eq:singular_eigenvalue_problem_radial_sector} and so cannot be an eigenvalue for the linearized operator \eqref{eq:linearized_operator_radial_sector} in the whole $H_{0}^1(\Omega_D \cup \Gamma_{1, 0})$, i.e. $u_D$ is a non-degenerate solution to \eqref{eq:pde_cone_starshaped} in $\Omega_D$.
\end{proof}

\begin{remark}
Condition \textit{(I)} in Corollary \ref{cor:nondegeneracy_cone}, i.e., the nondegeneracy of $u_D$ in the space $H^1_{0, rad}(\Omega_D \cup \Gamma_{1, 0})$, is satisfied by positive radial solutions of \eqref{eq:pde_cone_starshaped} corresponding to many kinds of nonlinearities. 

It holds if $f$ satisfies conditions \textit{(i)} or \textit{(ii)} of Proposition \ref{prop:existence_radial_sector}, because in this case all the eigenvalues of \eqref{eq:linearized_operator_radial_sector} and of \eqref{eq:singular_eigenvalue_problem_radial_sector} are positive. It then follows that \textit{(II)} holds as well. More precisely, in the case \textit{(i)}, since $0<a < \mu_1(B_1)$, the first eigenvalue of $L_{u_D}$ is positive, so
\begin{equation}
\underbrace{\lambda_0(D)}_{= 0} + \widehat \nu_1 > 0.
\end{equation}

In the case \textit{(ii)}, since $f'(u_D) \leq 0$, it follows that $\widehat \nu_1 > 0$.

Among the nonlinearities satisfying condition \textit{(iii)} of Proposition \ref{prop:existence_radial_sector} we could consider $f(u) = u^p$, $1 < p < \frac{N + 2}{N - 2}$, $N \geq 3$. Then it is known that the positive radial solution of \eqref{eq:Dirichlet_problem_in_the_ball} is unique and nondegenerate (see \cite{DamascelliGrossiPacella1999, GidasNiNirenberg1979}), so \textit{(I)} holds. It is also well-known that for this nonlinearity it holds $\widehat \nu_1 < 0$ and $\widehat \nu_1$ is the only negative eigenvalue of \eqref{eq:singular_eigenvalue_problem_interval_cone}, because $u_D$ can be obtained by the Mountain Pass Theorem or by minimization on the Nehari manifold and thus it has Morse index one. Then the validity of \textit{(II)} depends on the cone, since it depends on $\lambda_1(D)$. However, once $p$ is fixed, since $\widehat \nu_1$ does not depend on the cone, it is obvious that, by varying $D$, there are many cones for which \textit{(II)} holds. Moreover, it has been proved in \cite{CiraoloPacellaPolvara2023} that $\widehat \nu_1 > - (N - 1)$ for every autonomous nonlinearity, so that whenever $\lambda_1(D) > N - 1$ all radial solutions of \eqref{eq:pde_cone_starshaped} are nondegenerate.
\end{remark}

\subsection{Stability of $\mathbf{(\Omega_{\textit{D}}, \textit{u}_{\textit{D}})}$}
\label{subsec:stability_cone}

Let us first observe that if $u_D$ is a positive nondegenerate radial solution of \eqref{eq:pde_cone_starshaped} for $\varphi = 0$, belonging to $W^{1, \infty}(\Omega_D) \cap W^{2, 2}(\Omega_D)$, then $(\Omega_D, u_D)$ is energy-stationary in the sense of Definition \ref{def:energy_stationary}. Indeed, since $u_D$ is radial, we have that $\frac{\partial u_D}{\partial \nu} = \text{constant}$ on $\Gamma_0 = D$ and thanks to Proposition \ref{prop:energy_stationary_char} we easily conclude.

To investigate the stability of $(\Omega_D, u_D)$ we analyze the quadratic form corresponding to the second derivative $I''(\varphi)$ at $\varphi = 0$. Fixing the constant $c$ in the definition of $M$ (see \eqref{eq:def_M}) as $c = |\Omega_D|$, we have that the tangent space to $M$ at $\varphi = 0$ is given by
\begin{equation}
\label{eq:T_0_M}
T_0 M = \left\{v \in C^2(\overline D, \mathbb R) \ : \ \int_D v \ d\sigma = 0 \right\}.
\end{equation}

Writing $u_D(r) = u_D(|x|)$, we denote by $u_D'$ and $u_D''$ the derivatives of $u_D$ with respect to $r$, so that
\begin{equation}
\label{eq:boundary_values_u_D_primes}
u_D'(1) = \left.\frac{\partial u_D}{\partial \nu} \right|_D, \qquad u_D''(1) = [(D^2u_D \nu) \cdot \nu] |_D.
\end{equation}
By Hopf's Lemma we know that $u_D'(1) < 0$ and actually
\begin{equation}
\label{eq:mu_zero}
u_D'(1) = - \sqrt{- 2 \mu_D},
\end{equation} 
where $\mu_D$ denotes the Lagrange multiplier in the case $\varphi = 0$, see \eqref{eq:Lagrange_multiplier}. 

For $v \in T_0 M$, we will denote by $\widetilde u_v$ the solution of
\begin{equation}
\label{eq:equation_u_tilde_v}
\left\{
\begin{array}{rcll}
- \Delta \widetilde u_v - f'(u_D) \widetilde u_v & = & 0 & \quad \text{ in } \Omega_D \\[5pt]
\widetilde u_v & = & - u_D'(1) v & \quad \text{ on } D \\[4pt]
\displaystyle \frac{\partial \widetilde u_v}{\partial \nu} & = & 0 & \quad \text{ on } \Gamma_{1, 0} \setminus \{0\}
\end{array}
\right.
.
\end{equation}

Let us remark that for every $q \in D$ the outer unit normal vector $\nu(q)$ is precisely $q$, hence \eqref{eq:equation_u_tilde_v} corresponds to \eqref{eq:equation_for_u_tilde} in $\Omega_D$.

Note that, since $u_D$ is a nondegenerate radial solution, then the weak solution $\widetilde u_v$ of \eqref{eq:equation_u_tilde_v} is unique for every $v$.

Our next result shows that the quadratic form corresponding to the second derivative of $I$ at $\varphi = 0$ has a simple expression.

\begin{lemma}
\label{lem:I_two_primes_zero}
For any $v \in T_0M$ it holds
\begin{equation}
\label{eq:I_two_primes_zero}
I''(0)[v, v] = - u_D'(1)\left(\int_D v \frac{\partial \widetilde u_v}{\partial \nu} \ d \sigma + u_D''(1) \int_D v^2 \ d\sigma \right),
\end{equation}
where $\widetilde u_v$ is the solution of \eqref{eq:equation_u_tilde_v}.
\end{lemma}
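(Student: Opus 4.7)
The plan is to combine Lemma \ref{lem:Lagrange_multipliers_second_derivatives} with the general formula for $T''(\varphi)[v,w]$ from Lemma \ref{lem:T_2_cone}, specialized to $\varphi = 0$ and $w = v$. At $\varphi = 0$ the weight $e^{N\varphi}$ reduces to $1$ and $\nabla_{\mathbb{S}^{N-1}}\varphi \equiv 0$, so several geometric quantities collapse dramatically. The first simplification I would record is that since $u_D$ is radial, $\frac{\partial u_D}{\partial \nu}$ is constantly equal to $u_D'(1)$ on $D$; moreover on $D \subset \mathbb{S}^{N-1}$ the exterior unit normal of $\Omega_D$ is just $\nu(q) = q$.

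The second key simplification concerns the last two terms of \eqref{eq:T2_radial_graphs}. The term involving $\nabla_{\mathbb{S}^{N-1}}\varphi \cdot \nabla_{\mathbb{S}^{N-1}} v$ vanishes trivially because $\varphi = 0$. For the fourth term I would use that $\nabla u_D(q) = u_D'(1)\,q$ is purely radial at $q \in \mathbb{S}^{N-1}$, whereas $\nabla_{\mathbb{S}^{N-1}} v$ is a tangent vector to the sphere; their inner product is therefore zero, killing this term as well. For the term containing $(D^2 u_D(q)q)\cdot \nu$, I would verify by direct computation (differentiating $\nabla u_D(x) = u_D'(|x|) x/|x|$ along $x$) that $D^2 u_D(q) q = u_D''(1) q$ at $|q|=1$, so that $(D^2 u_D(q)q)\cdot q = u_D''(1)$. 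What remains from $T''(0)[v,v]$ is therefore only three surviving pieces:
\begin{equation*}
T''(0)[v,v] = -\tfrac{N}{2}(u_D'(1))^2 \int_D v^2\,d\sigma - u_D'(1)\int_D v\,\tfrac{\partial \widetilde u_v}{\partial \nu}\,d\sigma - u_D'(1)u_D''(1)\int_D v^2\,d\sigma,
\end{equation*}
where I also rewrote $\widetilde u_w$ with $w=v$ as $\widetilde u_v$; consistency with \eqref{eq:equation_u_tilde_v} follows because $\langle V,\nu\rangle = v$ on $D$ by the form \eqref{eq:defVradial} of the vector field at $\varphi = 0$.

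The last step is to subtract $\mu_D \mathcal V''(0)[v,v]$, where by \eqref{eq:volume_derivatives} we have $\mathcal V''(0)[v,v] = N \int_D v^2\,d\sigma$, and by Proposition \ref{prop:Lagrange_multiplier_is_negative} combined with \eqref{eq:mu_zero} we have $\mu_D = -\tfrac{1}{2}(u_D'(1))^2$. This contributes exactly $+\tfrac{N}{2}(u_D'(1))^2 \int_D v^2\,d\sigma$, which cancels the first term displayed above. Factoring $-u_D'(1)$ from the two remaining terms yields \eqref{eq:I_two_primes_zero}.

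There is no real obstacle here: the argument is essentially bookkeeping from Lemma \ref{lem:T_2_cone} once the spherical-sector symmetries are exploited. The only point that requires a little care is the identification of $\widetilde u_w$ in Lemma \ref{lem:T_2_cone} with the $\widetilde u_v$ solving \eqref{eq:equation_u_tilde_v}, which follows from the uniqueness statement in Proposition \ref{prop:local_uniqueness} applied to the linearized problem and the nondegeneracy of $u_D$.
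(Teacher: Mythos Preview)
Your proposal is correct and follows essentially the same approach as the paper: specialize Lemma~\ref{lem:T_2_cone} to $\varphi=0$, $w=v$, use the radial symmetry of $u_D$ to kill the last two terms and to identify $(D^2u_D\,q)\cdot q=u_D''(1)$, and then invoke Lemma~\ref{lem:Lagrange_multipliers_second_derivatives} together with $\mu_D=-\tfrac{1}{2}(u_D'(1))^2$ to cancel the $-\tfrac{N}{2}(u_D'(1))^2\int_D v^2$ term. The paper performs this same cancellation, merely rewriting the two cancelling integrals as $\pm\tfrac{N}{2}\int_D \widetilde u_v^2\,d\sigma$ via the boundary relation $\widetilde u_v=-u_D'(1)v$ on $D$, which is a cosmetic variant of your computation.
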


\begin{proof}
From Lemma \ref{lem:T_2_cone}, \eqref{eq:volume_derivatives} and Lemma \ref{lem:Lagrange_multipliers_second_derivatives}, with $w = v$, by simple substitutions and elementary computations we obtain:
\begin{align}
I''(0)[v, v]
& = - \frac{N}{2} \int_D (u_D'(1))^2 v^2 \ d\sigma - \int_D u_D'(1) v \frac{\partial \widetilde u_v}{\partial \nu} \ d\sigma \nonumber \\
& \quad - \int_D u_D'(1) v^2 (D^2u_D \nu)\cdot \nu \ d\sigma - N \mu_D \int_D v^2 \ d\sigma. \label{eq:mani_main}
\end{align}

Since $\widetilde u_v = -u_D'(1) v$ on $D$, by \eqref{eq:boundary_values_u_D_primes} and \eqref{eq:mu_zero}, we deduce that
\begin{align}
& - \frac{N}{2} \int_D (u_D'(1))^2 v^2 \ d\sigma = - \frac{N}{2} \int_D \widetilde u_v^2 \ d\sigma; \label{eq:mani_1} \\
& - N \mu_D \int_D v^2 \ d\sigma = \frac{N}{2} \int_D \widetilde u_v^2 \ d\sigma. \label{eq:mani_3}
\end{align}
Then \eqref{eq:I_two_primes_zero} follows by substituting \eqref{eq:mani_1}-\eqref{eq:mani_3} into \eqref{eq:mani_main}.
\end{proof}

To investigate the stability of $(\Omega_D, u_D)$ as an energy stationary pair for $I$ we need to study the solution $\widetilde u_v$ of \eqref{eq:equation_u_tilde_v}, for any $v \in T_0M$ (that is, for functions with mean value zero on $D$). As we will see, it will be enough to consider only functions $v$ which are eigenfunctions of the Laplace-Beltrami operator $- \Delta_{\mathbb{S}^{N - 1}}$ with Neumann boundary conditions on $D$. Hence we consider the eigenvalue problem
\begin{equation}
\label{eq:Neumann_eigenvalue_problem_D}
\left\{ 
\begin{array}{rcll}
- \Delta \psi & = & \lambda \psi & \quad \text{ on } D \\[3pt]
\displaystyle \frac{\partial \psi}{\partial \nu} & = & 0 & \quad \text{ on } \partial D
\end{array}
\right.
\end{equation}
and denote its eigenvalues as in \eqref{eq:Neumann_eigenvalues_on_D}, counted with multiplicity: $0 = \lambda_0(D) < \lambda_1(D)\leq \lambda_2(D) \leq \ldots$. The corresponding $L^2$-normalized eigenfunctions are denoted by $\{\psi_j\}_{j \in \mathbb N}$, with $\int_D \psi_j^2 \ d \sigma = 1$, $\psi_0 = \text{constant}$ and $\int_D \psi_j \ d\sigma = 0$ for $j \geq 1$.

\begin{theorem}
Let $j \geq 1$ and $\widetilde u_j$ be the unique solution of \eqref{eq:equation_u_tilde_v} for $v = \psi_j$. Then, writing $\widetilde u_j=\widetilde u_j(r,q)$, the function
\begin{equation}
\label{eq:def_h}
h_j(r) = \int_D \widetilde u_j(r, q) \psi_j(q) \ d\sigma, \quad r \in (0, 1)
\end{equation}
satisfies
\begin{equation}
\label{eq:equation_for_h_cone}
\left\{
\begin{array}{ll}
\displaystyle - h_j'' - \frac{N - 1}{r} h_j' - f'(u_D) h_j = - \frac{\lambda_j(D)}{r^2} h_j & \quad \text{ in } (0, 1) \\[8pt]
h_j(1) = -u_D'(1)
\end{array}
\right.
\end{equation}
\end{theorem}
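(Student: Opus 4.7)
\medskip

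\noindent\textbf{Proof proposal.} The plan is to write the problem \eqref{eq:equation_u_tilde_v} in polar coordinates, multiply by $\psi_j(q)$, and integrate over $D$ so that the action of $-\Delta_{\mathbb{S}^{N-1}}$ on $\psi_j$ produces the factor $\lambda_j(D)/r^2$, turning the PDE for $\widetilde u_j$ into the ODE for $h_j$.

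\medskip

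\noindent\textbf{Step 1: polar coordinates.} Writing $x=rq$ with $r=|x|$ and $q=x/|x|\in\mathbb S^{N-1}$, I recall that
\[
\Delta \widetilde u_j = \partial_r^2 \widetilde u_j + \frac{N-1}{r}\,\partial_r \widetilde u_j + \frac{1}{r^2}\,\Delta_{\mathbb S^{N-1}} \widetilde u_j.
\]
Since $u_D$ is radial, $f'(u_D)=f'(u_D(r))$ is a function of $r$ only. Thus multiplying the equation $-\Delta \widetilde u_j - f'(u_D)\widetilde u_j = 0$ by $\psi_j(q)$ and integrating in $q\in D$ gives, by differentiation under the integral sign,
\[
-h_j''(r) - \frac{N-1}{r}\,h_j'(r) - \frac{1}{r^2}\int_D \Delta_{\mathbb S^{N-1}}\widetilde u_j(r,q)\,\psi_j(q)\,d\sigma - f'(u_D(r))\,h_j(r)=0.
\]

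\medskip

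\noindent\textbf{Step 2: integration by parts on $D$.} By Green's identity on the manifold $D\subset\mathbb S^{N-1}$,
\[
\int_D \Delta_{\mathbb S^{N-1}}\widetilde u_j\,\psi_j\,d\sigma = \int_D \widetilde u_j\,\Delta_{\mathbb S^{N-1}}\psi_j\,d\sigma + \int_{\partial D}\Bigl(\psi_j\,\partial_{\nu_D}\widetilde u_j - \widetilde u_j\,\partial_{\nu_D}\psi_j\Bigr)\,d\sigma_{\partial D},
\]
where $\nu_D$ is the outer unit conormal to $\partial D$ in $\mathbb S^{N-1}$. The boundary term vanishes: by \eqref{eq:Neumann_eigenvalue_problem_D} we have $\partial_{\nu_D}\psi_j=0$ on $\partial D$; and at a point $rq$ with $q\in\partial D$, the outer unit normal to the lateral boundary $\Gamma_{1,0}$ is exactly $\nu_D(q)$ (it is the unit vector in $T_q\mathbb S^{N-1}$ normal to $\partial D$ and pointing out of the cone), so the Neumann condition $\partial_\nu \widetilde u_j=0$ on $\Gamma_{1,0}\setminus\{0\}$ translates into $\partial_{\nu_D}\widetilde u_j(r,q)=0$ for every $r\in(0,1)$ and $q\in\partial D$. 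Using $\Delta_{\mathbb S^{N-1}}\psi_j=-\lambda_j(D)\psi_j$ we therefore obtain
\[
\int_D \Delta_{\mathbb S^{N-1}}\widetilde u_j(r,q)\,\psi_j(q)\,d\sigma = -\lambda_j(D)\,h_j(r),
\]
and substituting into Step 1 yields precisely the ODE in \eqref{eq:equation_for_h_cone}.

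\medskip

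\noindent\textbf{Step 3: boundary value at $r=1$.} From the Dirichlet condition $\widetilde u_j(1,q)=-u_D'(1)\psi_j(q)$ on $D$ and the $L^2$-normalization $\int_D \psi_j^2\,d\sigma=1$, we get
\[
h_j(1) = \int_D \widetilde u_j(1,q)\,\psi_j(q)\,d\sigma = -u_D'(1)\int_D \psi_j^2\,d\sigma = -u_D'(1),
\]
which completes the proof.

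\medskip

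The main subtlety I anticipate is the geometric identification in Step 2: recognizing that the Neumann condition on the lateral boundary $\Gamma_{1,0}$ of the spherical sector $\Omega_D$ corresponds, at each fixed radius $r$, to a Neumann condition on $\partial D$ for the section $\widetilde u_j(r,\cdot)$, so that both boundary contributions in Green's identity vanish. Once this is understood, the rest of the argument is a straightforward separation-of-variables computation.
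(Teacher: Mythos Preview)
Your proof is correct and follows essentially the same route as the paper: write the Laplacian in polar coordinates, differentiate under the integral in $q$, use Green's identity on $D$ together with the Neumann conditions on $\psi_j$ and on $\widetilde u_j$ to move $\Delta_{\mathbb S^{N-1}}$ onto $\psi_j$, and read off the boundary value from the normalization of $\psi_j$. Your Step~2 is in fact slightly more explicit than the paper's, which simply invokes ``Green's formula, taking into account the Neumann conditions on $\psi$ and $\widetilde u$'' without spelling out the geometric identification of the lateral Neumann condition with $\partial_{\nu_D}\widetilde u_j(r,\cdot)=0$ on $\partial D$.
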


\begin{proof}
Since the proof is the same for all $j$, we drop the index and the dependence on $D$ and write simply $h$, $\psi$ and $\lambda$.

It is immediate to check that $h(1) = - u_D'(1)$. Moreover, since we can bring the radial derivative inside the integral on $D$, for every $r \in (0, 1]$ we have:
\begin{align}
- h''(r) - \frac{N - 1}{r} h'(r)
& = \int_D \left(- \widetilde u_{rr}(r, q) - \frac{N - 1}{r} \widetilde u_r(r, q) \right) \psi(q) \ d \sigma \nonumber \\
& = \int_D \left(- \Delta \widetilde u + \frac{1}{r^2} \Delta_{\mathbb{S}^{N - 1}} \widetilde u \right) \psi \ d \sigma \nonumber \\
& = \int_D f'(u_D(r)) \widetilde u \psi \ d\sigma + \frac{1}{r^2} \int_D (\Delta_{\mathbb{S}^{N - 1}} \widetilde u) \psi \ d\sigma. \label{eq:radial_laplacian_of_h}
\end{align}
Now, on the one hand, 
\begin{equation}
\label{eq:linearized_term_for_h}
\int_D f'(u_D(r)) \widetilde u \psi \ d\sigma = f'(u_D(r)) h(r).
\end{equation}
On the other hand, applying Green's formula, taking into account the Neumann conditions on $\psi$ and $\widetilde u$, we infer that
\begin{align}
\frac{1}{r^2} \int_D (\Delta_{\mathbb{S}^{N - 1}} \widetilde u) \psi \ d \sigma
& = \frac{1}{r^2} \int_D \widetilde u \Delta_{\mathbb{S}^{N - 1}} \psi \ d\sigma \nonumber \\
& = - \frac{\lambda}{r^2} \int_D \widetilde u \psi \ d\sigma \nonumber \\
& = - \frac{\lambda}{r^2} h(r). \label{eq:int_by_parts_to_find_h}
\end{align}

Substituting \eqref{eq:linearized_term_for_h} and \eqref{eq:int_by_parts_to_find_h} into \eqref{eq:radial_laplacian_of_h} we conclude the proof.
\end{proof}

\begin{remark}
\label{rem:u_equals_h_psi_cone}
Note that with $\widetilde u_j$ and $h_j$ as in Theorem \ref{thm:exists_h} we have that
\begin{equation*}
\widetilde u_j(r, q) = h_j(r) \psi_j(q).
\end{equation*}
Indeed, the boundary conditions are clearly satisfied by this function, and it holds
\begin{align*}
- \Delta (h_j \psi_j)
& = - h_j'' \psi_j - \frac{(N - 1)}{r} h_j' \psi_j - \frac{h_j}{r^2} \Delta_{\mathbb{S}^{N - 1}} \psi_j\\[-2pt]
& = f'(u_D) h_j \psi_j - \frac{\lambda_j(D)}{r^2} h_j \psi_j + \frac{\lambda_j(D)}{r^2} h_j \psi_j \\
& = f'(u_D) h_j \psi_j.
\end{align*}
\end{remark}

\begin{prop}
\label{prop:h(0)_equals_0}
Let $N \geq 3$. For any $j \geq 1$ we have
\begin{equation}
\label{eq:DGG_A_30}
\int_0^1 r^{N - 3} h_j^2 \ dr < + \infty
\end{equation}
and 
\begin{equation}
\label{eq:DGG_A_29.3}
\int_0^1 r^{N - 1} (h_j')^2 \ dr < + \infty.
\end{equation}
Moreover, $h_j \in L^\infty(0, \infty)$ and $h_j(0) = 0$.
\end{prop}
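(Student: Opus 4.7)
My strategy is to pull the two weighted one-dimensional integrals out of the Dirichlet and $L^2$ norms of $\widetilde u_j$ by exploiting the factorization $\widetilde u_j(r,q)=h_j(r)\psi_j(q)$ established in Remark \ref{rem:u_equals_h_psi_cone}, and then to analyze the ODE \eqref{eq:equation_for_h_cone} at its regular singular point $r=0$ via Frobenius to deduce the boundedness of $h_j$ and the vanishing $h_j(0)=0$.

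First, since $u_D$ is nondegenerate and the Dirichlet datum $-u_D'(1)\psi_j$ is smooth on $D$, problem \eqref{eq:equation_u_tilde_v} admits a unique weak solution $\widetilde u_j\in H^1(\Omega_D)$. Writing $x=rq$ with $r=|x|$ and $q\in\mathbb S^{N-1}$, the orthogonality of the radial and tangential directions gives $|\nabla\widetilde u_j|^2=(h_j')^2\psi_j^2+r^{-2}h_j^2|\nabla_{\mathbb S^{N-1}}\psi_j|^2$. Integrating in polar coordinates with $dx=r^{N-1}\,dr\,d\sigma(q)$ and using the normalizations $\int_D \psi_j^2\,d\sigma=1$ together with $\int_D |\nabla_{\mathbb S^{N-1}}\psi_j|^2\,d\sigma=\lambda_j(D)$, I would obtain
\begin{equation*}
\int_{\Omega_D}\widetilde u_j^2\,dx=\int_0^1 r^{N-1}h_j^2\,dr,\qquad \int_{\Omega_D}|\nabla\widetilde u_j|^2\,dx=\int_0^1 r^{N-1}(h_j')^2\,dr+\lambda_j(D)\int_0^1 r^{N-3}h_j^2\,dr.
\end{equation*}
Since the left-hand sides are finite and $\lambda_j(D)>0$ for $j\geq 1$, both \eqref{eq:DGG_A_30} and \eqref{eq:DGG_A_29.3} follow at once.

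For the boundedness of $h_j$ and the value at $r=0$, I would analyze \eqref{eq:equation_for_h_cone} at its regular singular point $r=0$. The indicial equation is $\rho^2+(N-2)\rho-\lambda_j(D)=0$, with roots
\[
\rho_\pm=\frac{-(N-2)\pm\sqrt{(N-2)^2+4\lambda_j(D)}}{2}.
\]
For $N\geq 3$ and $\lambda_j(D)>0$ one has $\rho_+>0$ and $\rho_-<-(N-2)\leq -1$, so classical Frobenius theory produces two independent solutions of the homogeneous equation with leading behavior $r^{\rho_+}$ and $r^{\rho_-}$ near $0$, and $h_j$ is a linear combination of these. The already-proved integrability \eqref{eq:DGG_A_30} rules out the $r^{\rho_-}$ component, because $N-3+2\rho_-=-1-\sqrt{(N-2)^2+4\lambda_j(D)}<-1$ would force the corresponding weighted integral to diverge. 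Hence $h_j(r)=O(r^{\rho_+})$ as $r\to 0^+$, which gives both $h_j\in L^\infty$ near $0$ (and, by continuity on $(0,1]$ from standard ODE theory, on the whole interval) and $h_j(0)=0$.

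The main obstacle is in the last step: making the Frobenius matching rigorous, i.e.\ writing $h_j$ unambiguously as a combination of two canonical solutions with prescribed leading behavior and using \eqref{eq:DGG_A_30} to kill the coefficient of the singular one. A clean alternative would be a Hardy-type energy estimate: multiply \eqref{eq:equation_for_h_cone} by $r^{N-1}h_j$, integrate on $[\varepsilon,1]$, and let $\varepsilon\to 0^+$; the boundary term $r^{N-1}h_j h_j'$ evaluated at $\varepsilon$ would blow up like $\varepsilon^{N-2+2\rho_-}$ along any $r^{\rho_-}$ component, contradicting the already established finiteness of the other integrals. One may alternatively invoke the singular Sturm--Liouville framework developed in \cite{AmadoriGladiali2020,CiraoloPacellaPolvara2023} for exactly this class of eigenvalue problems.
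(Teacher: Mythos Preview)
Your derivation of \eqref{eq:DGG_A_30} and \eqref{eq:DGG_A_29.3} is exactly the paper's argument: use $\widetilde u_j\in H^1(\Omega_D)$, expand $|\nabla\widetilde u_j|^2$ in polar coordinates via the factorization $\widetilde u_j=h_j\psi_j$, and read off the two one-dimensional integrals from the finiteness of the Dirichlet energy together with $\lambda_j(D)>0$.

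For the boundedness of $h_j$ and $h_j(0)=0$, the paper does not argue directly but simply invokes \cite[Lemma~A.9]{DancerGladialiGrossi2017} once \eqref{eq:DGG_A_30}--\eqref{eq:DGG_A_29.3} are in hand. Your Frobenius/indicial-root analysis is a correct and more self-contained route: the indicial equation $\rho^2+(N-2)\rho-\lambda_j(D)=0$ is right, the exponent $\rho_-$ indeed makes $r^{N-3}h_j^2$ non-integrable at the origin, and this forces $h_j(r)=O(r^{\rho_+})$. The only technical caveat, which you already flag, is that the textbook Frobenius theorem assumes analytic coefficients, whereas here $r^2 f'(u_D(r))$ is merely $C^{0,\alpha}$; to make this rigorous one uses asymptotic results for perturbed Euler equations (e.g.\ Levinson-type theorems) rather than power-series Frobenius, or one appeals to the singular Sturm--Liouville framework you mention. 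Either way your approach is sound and slightly more explicit than the paper's black-box citation.
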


\begin{proof}
Again, for simplicity, we drop the index $j$. Since $\widetilde u \in H^1(\Omega_D)$ (see Sect. \ref{sec:general_unbounded_domains}), writing $\widetilde u=\widetilde u(r,q)$ and recalling that $\psi$ is a $L^2(D)$-normalized solution to \eqref{eq:Neumann_eigenvalue_problem_D}, we get that
\begin{align}
+ \infty
& > \int_{\Omega_D}|\nabla \widetilde u|^2 \ dx \nonumber \\
& = \int_0^1 r^{N - 1} (h')^2 \int_D \psi^2 \ d\sigma \ dr + \int_0^1 r^{N - 3} h^2 \int_D |\nabla_{\mathbb{S}^{N - 1}} \psi|^2 \ d\sigma \ dr \nonumber \\
& = \int_0^1 r^{N - 1}(h')^2 \ dr + \lambda \int_0^1 r^{N - 3} h^2 \ dr,\nonumber
\end{align}
which proves \eqref{eq:DGG_A_30} and \eqref{eq:DGG_A_29.3}. Once we have these estimates, we can proceed as in \cite[Lemma A.9]{DancerGladialiGrossi2017} to get the boundness of $h$ and $h(0) = 0$.
\end{proof}

\begin{prop}
\label{prop:h_positive_cone}
Let $\lambda_j(D)$, $j \geq 1$ be a nontrivial Neumann eigenvalue of $- \Delta_{\mathbb{S}^{N - 1}}$ on $D$. Assume that 
\begin{equation*}
- \widehat \nu_1 < \lambda_j(D),
\end{equation*}
where $\widehat \nu_1$ is the smallest eigenvalue of \eqref{eq:singular_eigenvalue_problem_interval_cone}. Then for the solution $h_j$ of \eqref{eq:equation_for_h_cone} it holds that
\begin{equation*}
h_j > 0 \quad \text{ in } \quad (0, 1).
\end{equation*}
\end{prop}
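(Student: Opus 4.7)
The plan is to argue by contradiction, exploiting the variational characterization of the first singular eigenvalue $\widehat{\nu}_1$ of \eqref{eq:singular_eigenvalue_problem_interval_cone}. Recall from \cite{AmadoriGladiali2020} that $\widehat{\nu}_1$ admits the Rayleigh quotient expression
\begin{equation*}
\widehat{\nu}_1 \;=\; \inf\left\{\frac{\int_0^1 r^{N-1}\bigl[(z')^2 - f'(u_D)z^2\bigr]\,dr}{\int_0^1 r^{N-3}z^2\,dr} \;:\; z\not\equiv 0,\; z(1)=0,\; \int_0^1 r^{N-1}(z')^2+r^{N-3}z^2\,dr<\infty\right\}.
\end{equation*}
Since $h_j(1)=-u_D'(1)>0$ by Hopf's lemma, continuity gives $h_j>0$ on some left neighborhood of $1$, so the negative part $h_j^-\coloneqq\max\{-h_j,0\}$ vanishes in a neighborhood of $r=1$.

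Next I would assume by contradiction that $h_j^-\not\equiv 0$ on $(0,1)$ and use $h_j^-$ as a test function. Multiplying the equation \eqref{eq:equation_for_h_cone} by $r^{N-1}h_j^-$ and integrating on $(\varepsilon,1)$, the identity $h_j h_j^-=-(h_j^-)^2$ and the pointwise relation $h_j'(h_j^-)'=-((h_j^-)')^2$ yield, after integrating by parts,
\begin{equation*}
\bigl[-r^{N-1}h_j'h_j^-\bigr]_\varepsilon^1 + \int_\varepsilon^1 r^{N-1}\bigl[\,((h_j^-)')^2 - f'(u_D)(h_j^-)^2\,\bigr]dr \;=\; -\lambda_j(D)\int_\varepsilon^1 r^{N-3}(h_j^-)^2\,dr.
\end{equation*}
The boundary term at $r=1$ vanishes because $h_j^-(1)=0$; the one at $r=\varepsilon$ vanishes in the limit $\varepsilon\to 0^+$ by combining $h_j(0)=0$ with the integrability estimates \eqref{eq:DGG_A_30}--\eqref{eq:DGG_A_29.3} of Proposition \ref{prop:h(0)_equals_0} (which also ensure that $h_j^-$ lies in the admissible class for the Rayleigh quotient above). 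Passing to the limit I would obtain
\begin{equation*}
\frac{\int_0^1 r^{N-1}\bigl[((h_j^-)')^2 - f'(u_D)(h_j^-)^2\bigr]dr}{\int_0^1 r^{N-3}(h_j^-)^2\,dr} \;=\; -\lambda_j(D),
\end{equation*}
and the variational characterization of $\widehat{\nu}_1$ would force $\widehat{\nu}_1\le -\lambda_j(D)$, contradicting the hypothesis $-\widehat{\nu}_1<\lambda_j(D)$. Therefore $h_j^-\equiv 0$, i.e. $h_j\ge 0$ on $(0,1)$.

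To upgrade nonnegativity to strict positivity, I would invoke uniqueness for the Cauchy problem: on $(0,1)$ the equation \eqref{eq:equation_for_h_cone} has continuous coefficients, so if $h_j(r_1)=0$ for some $r_1\in(0,1)$, then $r_1$ is an interior minimum, forcing $h_j'(r_1)=0$, and the linear Cauchy uniqueness would yield $h_j\equiv 0$, contradicting $h_j(1)>0$. Hence $h_j>0$ on $(0,1)$.

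The main technical obstacle is the careful control of the boundary behaviour at $r=0$: the first-order coefficient $(N-1)/r$ and the zeroth-order coefficient $-\lambda_j(D)/r^2$ are singular there, so verifying both the admissibility of $h_j^-$ in the weighted Sobolev space and the vanishing of the boundary term $r^{N-1}h_j'(r)h_j^-(r)$ as $r\to 0^+$ is the delicate point; this is precisely where the dimension assumption $N\ge 3$ and the estimates provided by Proposition \ref{prop:h(0)_equals_0} are used.
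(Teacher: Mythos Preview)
Your argument is correct and takes a genuinely different route from the paper's. The paper writes both the equation for $h_j$ and the equation for the first eigenfunction $z_1$ of \eqref{eq:singular_eigenvalue_problem_interval_cone} in Sturm--Liouville form and invokes the Sturm--Picone Comparison Theorem: since $-\widehat\nu_1<\lambda_j(D)$ the potential in the $z_1$-equation dominates that in the $h_j$-equation, so an interval between two zeros of $h_j$ (one of which is $r=0$, by Proposition~\ref{prop:h(0)_equals_0}) would have to contain a zero of $z_1$, contradicting that $z_1$ has fixed sign. You instead test the equation against $h_j^-$ and read off that the weighted Rayleigh quotient evaluated at $h_j^-$ equals $-\lambda_j(D)<\widehat\nu_1$, violating the variational characterization of $\widehat\nu_1$; this is closer in spirit to a maximum-principle argument and makes the role of the variational definition of $\widehat\nu_1$ explicit, whereas the paper's route is classical ODE oscillation theory. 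Both approaches hinge on the same delicate control near $r=0$ (boundedness of $r^{N-1}h_j'$ and $h_j(0)=0$, provided by Proposition~\ref{prop:h(0)_equals_0}), and your final Cauchy-uniqueness step to upgrade $h_j\ge 0$ to $h_j>0$ is a clean addition. One cosmetic point: the boundary term in your displayed identity should carry the opposite sign (it is $[r^{N-1}h_j'h_j^-]_\varepsilon^1$ after integrating by parts and flipping signs), but since it vanishes in the limit this does not affect the conclusion.
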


\begin{proof}
Let $z_1$ be an $L^2$-normalized first eigenfunction of \eqref{eq:singular_eigenvalue_problem_interval_cone}. From \cite[Section 3.1]{AmadoriGladiali2020} we know that $z_1$ does not change sign. 

Writing the equations satisfied by $h_j$ and $z_1$ in Sturm-Liouville form we have:
\begin{align*}
& (r^{N - 1} h_j')' + r^{N - 1}(f'(u_D) - r^{- 2} \lambda_j(D))h_j = 0, \\
& (r^{N - 1} z_1')' + r^{N - 1}(f'(u_D) + r^{-2} \widehat \nu_1)z_1 = 0.
\end{align*}
By Proposition \ref{prop:h(0)_equals_0} we know that $h_j(0) = 0$ and $h_j(1) = -u_D'(1)>0$. 

Now, assume by contradiction that $h_j$ changes sign in $(0, 1)$. Then there would exist $r_0 \in (0, 1)$ such that $h_j(0) = 0$. Since $- \widehat \nu_1 < \lambda_j(D)$, then, by the Sturm-Picone Comparison Theorem it would follow that $z_1$ has a zero in $(0, r_0)$. This is a contradiction, because $z_1$ does not change sign. Hence the only possibility is that $h_j$ is strictly positive in $(0, 1)$.
\end{proof}

We are ready to prove our main result for problem \eqref{eq:pde} in the case of the cone, i.e., Theorem \ref{thm:stability_cone}, which is a sharp instability/stability result for the pair $(\Omega_D, u_D)$.

\subsection*{Proof of Theorem \ref{thm:stability_cone}}
Let us fix the domain $D$ which spans the cone, so that we denote $\lambda_1(D)$ simply by $\lambda_1$.

For \textit{(i)}, let $\widetilde u_1 = h_1 \psi_1$ be the solution of \eqref{eq:equation_u_tilde_v} with $v = \psi_1$. Then
\begin{equation}
\label{eq:I''_psi_one_psi_one}
I''(0)[\psi_1, \psi_1] = - u_D'(1) (h_1'(1) + u_D''(1)).
\end{equation}

Putting \eqref{eq:equation_for_h_cone} in Sturm-Liouville form we get
\begin{equation}
\label{eq:sturm_h}
- (r^{N - 1} h_1')' - r^{N - 1}f'(u_D)h_1 = - r^{N - 3} \lambda_1 h_1.
\end{equation}
On the other hand, writing $- \Delta u_D = f(u_D)$ in polar coordinates and differentiating with respect to $r = |x|$ we  get
\begin{equation*}
- (u_D')'' - \frac{N - 1}{r} (u_D')' - f'(u_D) u_D' = - \frac{N - 1}{r^2} u_D',
\end{equation*}
which in Sturm-Liouville form is
\begin{equation}
\label{eq:sturm_u_D'}
- (r^{N - 1} u_D'')' - r^{N - 1} f'(u_D)u_D' = - r^{N - 3} (N - 1) u_D'.
\end{equation}

Multiplying \eqref{eq:sturm_h} by $u_D'$ and integrating by parts in $(\bar r, 1)$ we get that
\begin{equation}\label{eq:proofteo1}
\begin{array}{lll}
&&\displaystyle\int_{\bar r}^1 r^{N - 1} h_1' u_D'' \ dr - (r^{N - 1} h_1' u_D')\big|_{\bar r}^1- \int_{\bar r}^1 r^{N - 1} f'(u_D) h_1 u_D' \ dr\\[6pt]
&=&\displaystyle - \lambda_1 \int_{\bar r}^1 r^{N - 3} h_1 u_D' \ dr.
\end{array}
\end{equation}
 Similarly, multiplying \eqref{eq:sturm_u_D'} by $h_1$ and integrating by parts we deduce that
\begin{equation}\label{eq2:proofteo1}
\begin{array}{lll}
&\displaystyle\int_{\bar r}^1 r^{N - 1} h_1' u_D'' \, dr - (r^{N - 1} h_1 u_D'')\big|_{\bar r}^1 - \int_{\bar r}^1 r^{N - 1} f'(u_D) u_D' h_1 \, dr\\
=&\displaystyle -(N - 1) \int_{\bar r}^1 r^{N - 3} u_D' h_1 \, dr.
\end{array}
\end{equation}
Notice that, in view of Proposition \ref{prop:h(0)_equals_0}, the right-hand sides of \eqref{eq:proofteo1}, \eqref{eq2:proofteo1} remain finite when taking the limit as $\bar r\to 0^+$. In addition, we claim that
\begin{equation}
\label{eq:claimteo1}
\lim_{\bar r \to 0^+} r^{N - 1} h_1'(\bar r) u_D'(\bar r) = 0.
\end{equation}
Indeed, integrating \eqref{eq:sturm_h} and taking the absolute value we obtain
\begin{align*}
\left|\int_{\bar r}^1 - (r^{N - 1} h_1')' \ dr \right|
& = \left|\bar r^{N - 1} h_1'(\bar r) - h_1'(1)\right| \\
& \leq \int_{\bar r} r^{N - 1} |f'(u_D)| h_1\, dr + \int_0^1 r^{N - 3} \lambda_1 h_1 \, dr \\
& \leq C_1
\end{align*}
for some $C_1 > 0$. Hence
\begin{equation}
\limsup_{\bar r \to 0^+} \bar r^{N - 1}|h_1'(\bar r)| \leq C_2
\end{equation}
for some $C_2>0$, and thus, since $\lim_{\bar r \to 0^+} u_D'(\bar r) = 0$, \eqref{eq:claimteo1} follows.

Now, subtracting \eqref{eq2:proofteo1} from \eqref{eq:proofteo1} and taking the limit as $\bar r \to 0^+$, then, thanks to \eqref{eq:claimteo1} and since $h_1(0)=0$, $h_1(1) = - u_D'(1)$, we obtain
\begin{equation}
\label{eq:equation_for_instability}
- u_D'(1)(h_1'(1) + u_D''(1)) = (N - 1 - \lambda_1) \int_0^1 r^{N - 3} h_1 u_D' \ dr.
\end{equation}
Since $\lambda_1 > -\widehat \nu_1$, then, by Proposition \ref{prop:h_positive_cone}, we have that $h_1 > 0$ in $(0,1)$. On the other hand $u_D' < 0$ in $(0, 1)$ and $\lambda_1 < N - 1$ by assumption. Hence by \eqref{eq:I''_psi_one_psi_one} and \eqref{eq:equation_for_instability} we obtain $$I''(0)[\psi_1, \psi_1] < 0,$$ 
which proves \textit{(i)}.\\

For \textit{(ii)}, we choose an orthonormal basis $(\psi_j)_j$ of $L^2(D)$ made of normalized eigenfunctions of \eqref{eq:Neumann_eigenvalue_problem_D}. Then any $v \in T_0M$ can be written as
\begin{equation*}
v = \sum_{j = 1}^\infty (v, \psi_j) \psi_j,
\end{equation*}
where $(\cdot, \cdot)$ denotes the inner product in $L^2(D)$. We assume without loss of generality that $\int_D v^2 \ d\sigma = 1$. Let $\widetilde u_j$ be the solution of \eqref{eq:equation_u_tilde_v} with $v = \psi_j$, then we can check that
\begin{equation*}
\widetilde v = \sum_{j = 1}^\infty (v, \psi_j) \widetilde u_j
\end{equation*}
is the solution of \eqref{eq:equation_u_tilde_v}. As observed in Remark \ref{rem:u_equals_h_psi_cone}, $\widetilde u_j(r,q) = h_j(r) \psi_j(q)$ for every $j \in \mathbb N$, so
\begin{equation*}
\frac{\partial \widetilde u_j}{\partial \nu}(1,q) = h_j'(1) \psi_j(q)\ \quad\text{on $D$. }
\end{equation*}

By an argument analogous to the one presented in the proof of \textit{(i)}, we have that if $k > j$, then $h_k'(1) {\geq} h_j'(1)$ and in fact $h_k'(1) > h_j'(1)$ if $k>j$ are such that $\lambda_k > \lambda_j$. 

Indeed, writing the equations for $h_j, h_k$, multiplying the first one by $h_k$ and the second one by $h_j$, integrating by parts and subtracting we get
\begin{equation*}
- u_D'(1) (h_k'(1) - h_j'(1)) = (- \lambda_j + \lambda_k) \int_0^1 r^{N - 3}h_ih_j \geq 0.
\end{equation*}

Exploiting the orthogonality of the basis $(\psi_j)_j$ and exploiting \eqref{eq:equation_for_instability} we obtain
\begin{align}
I''(0)[v, v]
& = - u_D'(1) \left(\int_D \left(\sum_{j = 1}^\infty (v, \psi_j) \psi_j \right)\left(\sum_{k = 1}^\infty (v, \psi_j) h_k'(1) \psi_k\right) \ d\sigma + u_D''(1) \int_D v^2 \ d\sigma \right) \nonumber \\
& = - u_D'(1) \left(\left(\sum_{j = 1}^\infty (v, \psi_j)^2 h_j'(1) \right) + u_D''(1)\right) \nonumber \\
& \geq - u_D'(1) \left(h_1'(1) \left(\sum_{j = 1}^\infty (v, \psi_j)^2 \right) + u_D''(1) \right) \nonumber \\
& = - u_D'(1) (h_1'(1) + u_D''(1)) \nonumber\\ 
& = \nonumber (N - 1 - \lambda_1) \int_0^1 r^{N - 3} h_1 u_D' \ dr>0,
\end{align}
because $h_1>0$ in $(0,1)$, $u^\prime_D<0$ in $(0,1)$ and $\lambda_1>N-1$ by assumption. The proof is complete.
\qed

\begin{remark}
\label{rem:remark_RN_stability}
As already pointed out in Remark \ref{rem:remark_RN_energy_stationary}, in the case when $\mathcal C = \mathbb R^N$, the couples $(B, u_B)$, where $B$ is a ball and $u_B$ is a positive nondegenerate radial solution, are the only energy-stationary pairs. Thus it remains to study the stability of $(B, u_B)$ as critical point of the energy functional $T$. This can be done by looking at the problem as the case of a cone spanned by the domain $D = \mathbb{S}^{N - 1}$. 

As observed in Remark \ref{rem:Remark_on_thm_stability_cone}, the first eigenvalue $\widehat \nu_1$ of the singular eigenvalue problem \eqref{eq:singular_eigenvalue_problem_interval_cone} is always larger than $-(N - 1)$. On the other hand, it is known that the first nontrivial eigenvalue of the Laplace-Beltrami operator on the whole $\mathbb{S}^{N - 1}$ is precisely $N - 1$. Then any radial solution $u_B$ is nondegenerate and we obtain that the pair $(B,  u_B)$ is a semistable stationary-point.
\end{remark}

\section{The case of the cylinder}
\label{sec:cylinder}

Let $\omega \subset \mathbb R^{N - 1}$ be a smooth bounded domain and let $\Sigma_\omega$ be the half-cylinder spanned by $\omega$, namely
\begin{equation*}
\Sigma_\omega \coloneqq \omega \times (0, + \infty).
\end{equation*}
We denote by $x = (x', x_N)$ the points in $\overline \Sigma_\omega$, where $x'=(x_1,\ldots,x_{N-1}) \in \overline \omega$ and $x_N \geq 0$.

In analogy with the case of the cone, we consider domains whose relative boundaries are the cartesian graphs of functions in $C^2(\overline \omega)$. More precisely, for $\varphi \in C^2(\overline \omega)$ we set
\begin{equation*}
\Gamma_\varphi \coloneqq \{(x', x_N) \in \Sigma_\omega \ : \ x_N = e^{\varphi(x')}\}
\end{equation*}
and consider domains of the type
\begin{equation*}
\Omega_\varphi = \{(x', x_N) \in \Sigma_\omega \ : \ x_N < e^{\varphi(x')}\}.
\end{equation*}
Finally, let
\begin{equation*}
\Gamma_{1, \varphi} \coloneqq (\partial \Omega_\varphi\setminus\overline{\Gamma}_\varphi).
\end{equation*}

Observe that the outer unit normal vector on $\Gamma_\varphi$ at a point $(x', e^{\varphi(x')})$ is given by
\begin{equation}
\label{eq:nu}
\nu=\nu_\varphi(x^\prime)= \frac{(- e^{\varphi(x^\prime)}  \nabla_{\R^{N-1}}\varphi(x^\prime), 1)}{\sqrt{1 + |e^{\varphi(x^\prime)} \nabla_{\R^{N-1}} \varphi(x^\prime)|^2}},
\end{equation}
where $\nabla_{\R^{N-1}}$ denotes the gradient with respect to the variables $x_1,\ldots,x_{N-1}$.

\subsection{Energy functional in cylindrical domains}
\label{subsec:energy_cylinder}

We study the semilinear elliptic problem
\begin{equation}
\label{eq:pde_cylinder}
\left\{
\begin{array}{rcll}
- \Delta u & = & f(u) & \quad \text{ in } \Omega_\varphi \\[2mm]
u & = & 0 & \quad \text{ on } \Gamma_\varphi \\ [1mm]
\displaystyle \frac{\partial u}{\partial \nu} & = & 0 & \quad \text{ on } \Gamma_{1, \varphi}
\end{array}
\right.
\end{equation}
and consider bounded positive weak solutions of \eqref{eq:pde_cylinder} in the Sobolev space $H_0^1(\Omega_\varphi \cup \Gamma_{1,\varphi})$, which is the space of functions in $H^1(\Omega_\varphi)$ whose trace vanishes on $\Gamma_\varphi$. 

As before, we assume that a bounded nondegenerate positive solution $u_\varphi$ of \eqref{eq:pde_cylinder} exists and belongs to $W^{1, \infty}(\Omega_\varphi) \cap W^{2, 2}(\Omega_\varphi)$, so that we can apply the results of Section \ref{sec:general_unbounded_domains}.

We consider variations of the domain $\Omega_\varphi$ in the class of cartesian graphs of the type $\Omega_{\varphi + tv}$, for $v \in C^2(\overline \omega)$, which amounts to consider a one-parameter family of diffeomorphisms $\xi:(-\eta,\eta)\times\overline\Sigma_\omega\to \overline\Sigma_\omega$ of the type
\begin{equation*}
\xi(t, x) = (x', e^{t v(x')}x_N),
\end{equation*}
whose inverse, for any fixed $t\in(-\eta, \eta)$, is given by 
\begin{equation*}
\xi(t, x)^{-1} = (x', e^{- t v(x')} x_N) = \xi(- t, x).
\end{equation*}
This one-parameter family of diffeomorphisms is generated by the vector field
\begin{equation}\label{eq:defvectfield}
V(x) = (0^\prime, v(x')x_N),
\end{equation}
where $0^\prime:=(0, \ldots, 0)\in\R^{N-1}$.
Indeed, $\xi(0, x) = x$ for every $x \in \overline\Sigma_\omega$, 
\begin{equation*}
\frac{d\xi}{dt}(t, x) = (0^\prime, e^{tv(x')}v(x')x_N) = V(\xi(t, x))\quad \forall (t,x)\in (-\eta,\eta)\times \Sigma_\omega
\end{equation*}
and $\xi(t, x)\in\partial\Sigma_\omega$, for all $(t,x)\in (-\eta,\eta)\times \partial\Sigma_\omega$.
We also observe that, in view of \eqref{eq:nu}, it holds
\begin{equation}
\label{eq:V_scalar_nu}
\langle V, \nu \rangle = \left\langle (0^\prime, v e^\varphi), \frac{(- e^\varphi \nabla_{\R^{N-1}} \varphi, 1)}{\sqrt{1 + |e^\varphi \nabla_{\R^{N-1}} \varphi|^2}} \right\rangle =  \frac{v e^\varphi}{\sqrt{1 + |e^\varphi \nabla_{\R^{N-1}} \varphi|^2}} \quad \ \text{on $\Gamma_\varphi$.}
\end{equation}

The energy functional $T$ defined in \eqref{eq:def_T} becomes a functional depending only on functions in $C^2(\overline \omega)$. More precisely, for every $v \in C^2(\overline \omega)$, in view of Proposition \ref{prop:local_uniqueness}, there exists $\delta > 0$ sufficiently small such that for all $t \in (- \delta, \delta)$
\begin{equation*}
T(\varphi + tv) = T(\Omega_{\varphi + tv}) = J(u_{\varphi + tv}),
\end{equation*}
is well defined, where $u_{\varphi + tv} \coloneqq u_{\Omega_{\varphi + tv}}$ is the unique positive solution of \eqref{eq:pde_cylinder} in the domain $\Omega_{\varphi + tv}$, in a neighborhood of $u_\varphi \circ \xi_t^{-1}$.

By the results of Section \ref{sec:general_unbounded_domains} we know that the map $t \mapsto u_{\varphi + tv}$ is differentiable at $t = 0$, and the derivative $\widetilde u$ is a weak solution of
\begin{equation}
\label{eq:equation_for_u_tilde_cartesian_graphs}
\left\{
\begin{array}{rcll}
- \Delta \widetilde u & = & f'(u_\varphi) \widetilde u & \quad \text{ in } \Omega_\varphi \\ [.6em]
\widetilde u & = & \displaystyle - \frac{\partial u_\varphi}{\partial \nu} \frac{v e^\varphi}{\sqrt{1 + |e^\varphi \nabla_{\R^{N-1}} \varphi|^2}} & \quad \text{ on } \Gamma_\varphi \\
\displaystyle \frac{\partial \widetilde u}{\partial \nu} & = & 0 & \quad \text{ on } \Gamma_{1, \varphi}
\end{array}
\right.
\end{equation}

We now compute the first derivative of $T$ at $\Omega_\varphi$, i.e., for $t = 0$, with respect to variations $v \in C^2(\overline \omega)$.

\hfill

\begin{lemma}
\label{lem:T_1_cylinder}
Let $\varphi \in C^2(\overline \omega)$ and assume that $u_\varphi$ is a positive nondegenerate solution of \eqref{eq:pde_cylinder} which belongs to $W^{1, \infty}(\Omega) \cap W^{2, 2}(\Omega)$. Then, for any $v \in C^2(\overline \omega)$ we have
\begin{equation}
\label{eq:T_1_cylinder}
T'(\varphi)[v] = - \frac{1}{2} \int_\omega \left(\frac{\partial u_\varphi}{\partial \nu}(x^\prime,e^\varphi)\right)^2 v e^\varphi \ dx'.
\end{equation}
\end{lemma}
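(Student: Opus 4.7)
The plan is to apply Proposition \ref{prop:T_1_general} directly: the hypothesis that $u_\varphi \in W^{1,\infty}(\Omega_\varphi)\cap W^{2,2}(\Omega_\varphi)$ is nondegenerate makes that proposition available, so I only need to translate the general boundary integral
\[
\left.\frac{d}{dt}\right|_{t=0} T(\Omega_{\varphi+tv}) = -\frac{1}{2}\int_{\Gamma_\varphi} |\nabla u_\varphi|^2 \langle V,\nu\rangle \, d\sigma
\]
into the graph parametrization over $\omega$, where $V$ is the vector field in \eqref{eq:defvectfield} generating the flow $\xi(t,x) = (x', e^{tv(x')}x_N)$. Since $V(x)=(0',v(x')x_N)$ is tangent to the lateral boundary $\partial\Sigma_\omega$ (because its first $N-1$ components vanish and $x_N=0$ on the bottom is preserved by $V$), the assumption of Proposition \ref{prop:T_1_general} is fulfilled.

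Next I would simply assemble the pieces. Since $u_\varphi=0$ on $\Gamma_\varphi$, the tangential component of $\nabla u_\varphi$ vanishes there, and therefore
\[
|\nabla u_\varphi|^2 = \left(\frac{\partial u_\varphi}{\partial \nu}\right)^2 \quad \text{on } \Gamma_\varphi.
\]
The scalar product $\langle V,\nu\rangle$ on $\Gamma_\varphi$ has already been computed in \eqref{eq:V_scalar_nu}:
\[
\langle V,\nu\rangle = \frac{v\,e^\varphi}{\sqrt{1+|e^\varphi\nabla_{\R^{N-1}}\varphi|^2}}.
\]
Parametrizing $\Gamma_\varphi$ as the graph $x'\mapsto (x',e^{\varphi(x')})$ over $\omega$, the induced $(N-1)$-dimensional area element is the standard graph area element
\[
d\sigma_{\Gamma_\varphi} = \sqrt{1 + |\nabla_{\R^{N-1}}(e^\varphi)|^2}\, dx' = \sqrt{1+|e^\varphi \nabla_{\R^{N-1}}\varphi|^2}\, dx'.
\]

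Finally I would substitute these three expressions into the formula of Proposition \ref{prop:T_1_general}. The factor $\sqrt{1+|e^\varphi\nabla_{\R^{N-1}}\varphi|^2}$ arising from the area element cancels exactly with the denominator appearing in $\langle V,\nu\rangle$, and one is left with
\[
T'(\varphi)[v] = -\frac{1}{2}\int_\omega \left(\frac{\partial u_\varphi}{\partial\nu}(x',e^{\varphi(x')})\right)^2 v(x')\, e^{\varphi(x')}\, dx',
\]
which is exactly \eqref{eq:T_1_cylinder}. There is no genuine obstacle here: the argument is essentially a change of variables, with the only mildly delicate point being the verification that $V$ is an admissible deformation field for Proposition \ref{prop:T_1_general} (tangency to $\partial\Sigma_\omega$ on the regular part), which is immediate from the product form of $V$.
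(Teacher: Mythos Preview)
Your proof is correct and follows exactly the paper's approach: apply Proposition~\ref{prop:T_1_general}, then use the expression \eqref{eq:V_scalar_nu} for $\langle V,\nu\rangle$ together with the graph area element $d\sigma_{\Gamma_\varphi}=\sqrt{1+|e^\varphi\nabla_{\R^{N-1}}\varphi|^2}\,dx'$, so that the square-root factors cancel. The paper's own proof is essentially a one-line reference to these same ingredients, so your version is just a slightly more explicit rendering of the same argument.
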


\begin{proof}
The proof is similar to that of Lemma \ref{lem:T1_radial_graphs}. It suffices to observe that for the parametrization of $\Gamma_\varphi$ given by $x=(x^\prime,e^{\varphi(x^\prime)})$, for $x^\prime\in\omega$, the induced $(N-1)$-dimensional area element on $\Gamma_\varphi$ is expressed by
\begin{equation*}
d \sigma_{\Gamma_\varphi} = \sqrt{1 + |e^\varphi \nabla_{\R^{N-1}} \varphi|^2} \ dx'.
\end{equation*}
Then the result follows immediately from Proposition \ref{prop:T_1_general}, taking into account  \eqref{eq:V_scalar_nu}. 
\end{proof}

\hfill

\begin{lemma}
\label{lem:T2_cartesian}
Let $\varphi$ and $u_\varphi$ be as in Lemma \ref{lem:T_1_cylinder}. Then for any $v, w \in C^2(\overline \omega)$ it holds
\begin{align}
T''(\varphi)[v, w] =
& - \frac{1}{2} \int_\omega \left(\frac{\partial u_\varphi}{\partial \nu}(x^\prime,e^\varphi) \right)^2 e^\varphi v w \ dx' \nonumber \\
& - \int_\omega \frac{\partial \widetilde u_w}{\partial \nu}(x^\prime,e^\varphi) \frac{\partial u_\varphi}{\partial \nu}(x^\prime,e^\varphi) e^\varphi v \ dx' \nonumber \\
& - \int_\omega \frac{\partial u_\varphi}{\partial \nu}(x^\prime,e^\varphi) [(D^2u_\varphi(x^\prime,e^\varphi) (0^\prime,e^\varphi)) \cdot \nu]  vw \ dx' \nonumber \\
& + \int_\omega \frac{\partial u_\varphi}{\partial \nu}(x^\prime,e^\varphi)e^{2\varphi} v  \frac{\nabla u_\varphi(x^\prime,e^\varphi) \cdot (w \nabla_{\mathbb R^{N - 1}} \varphi + \nabla_{\mathbb R^{N - 1}} w, 0)}{\sqrt{1 + |e^\varphi \nabla_{\mathbb R^{N - 1}} \varphi|^2}} \ dx' \nonumber \\
& + \int_\omega \left(\frac{\partial u_\varphi}{\partial \nu}(x^\prime,e^\varphi) \right)^2 e^{3\varphi} v \frac{\nabla_{\mathbb R^{N - 1}} \varphi \cdot (w\nabla_{\mathbb R^{N - 1}} \varphi + \nabla_{\mathbb R^{N - 1}} w)}{1 + |e^\varphi \nabla_{\mathbb R^{N - 1}} \varphi|^2} \ dx', 
\label{eq:T2_cartesian}
\end{align}
where  $\widetilde u_w$ is the solution of \eqref{eq:equation_for_u_tilde_cartesian_graphs}, with $w$ in the place of $v$.
\end{lemma}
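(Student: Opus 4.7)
The natural strategy is to exploit Lemma \ref{lem:T_1_cylinder} and obtain $T''(\varphi)[v,w]$ as the directional derivative of $\varphi\mapsto T'(\varphi)[v]$ along $w$. Concretely, for $v,w\in C^2(\overline\omega)$ fixed, we write
\[
T'(\varphi+sw)[v] = -\frac{1}{2}\int_\omega \Bigl(\tfrac{\partial u_{\varphi+sw}}{\partial \nu_{\varphi+sw}}(x',e^{\varphi+sw})\Bigr)^2\, v\, e^{\varphi+sw}\, dx',
\]
and then compute $\frac{d}{ds}\bigr|_{s=0}$ under the integral sign (justified because the diffeomorphism $\xi_t$ pulls everything back to the fixed domain $\Omega_\varphi$, and by Proposition \ref{prop:local_uniqueness} the map $s\mapsto u_{\varphi+sw}$ is smooth). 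The product rule produces three types of contributions: from differentiating $e^{\varphi+sw}$ (which yields the first line of \eqref{eq:T2_cartesian} with the extra factor $w$), from differentiating the squared normal derivative $\bigl(\nabla u_{\varphi+sw}(x',e^{\varphi+sw})\cdot \nu_{\varphi+sw}\bigr)^2$, and from the corresponding factor $2A(s)\,\partial_s A(s)$ collected together.

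The term $\partial_s A$ itself decomposes into three pieces via the chain rule. First, the derivative of the solution, $\partial_s u_{\varphi+sw}\bigr|_{s=0}=\widetilde u_w$, supplies the contribution $\nabla \widetilde u_w\cdot \nu_\varphi = \tfrac{\partial \widetilde u_w}{\partial\nu}$, giving the second line of \eqref{eq:T2_cartesian}. Second, the chain rule applied to the moving evaluation point $(x',e^{\varphi+sw(x')})$ contributes $(D^2 u_\varphi(x',e^\varphi)(0',w\,e^{\varphi}))\cdot\nu_\varphi$, producing the third line. Third, the derivative of the unit normal must be computed from the explicit formula \eqref{eq:nu}: a direct calculation starting from
\[
\nu_{\varphi+sw} = \frac{(-e^{\varphi+sw}\nabla_{\R^{N-1}}(\varphi+sw),1)}{\sqrt{1+|e^{\varphi+sw}\nabla_{\R^{N-1}}(\varphi+sw)|^2}}
\]
and differentiating separately numerator and denominator yields
\[
\partial_s\nu_{\varphi+sw}\bigr|_{s=0}
= -\frac{e^\varphi(w\nabla_{\R^{N-1}}\varphi+\nabla_{\R^{N-1}}w,0)}{\sqrt{1+|e^\varphi\nabla_{\R^{N-1}}\varphi|^2}}
-\frac{e^{2\varphi}\,\nabla_{\R^{N-1}}\varphi\cdot(w\nabla_{\R^{N-1}}\varphi+\nabla_{\R^{N-1}}w)}{1+|e^\varphi\nabla_{\R^{N-1}}\varphi|^2}\,\nu_\varphi.
\]
Pairing this with $\nabla u_\varphi(x',e^\varphi)$ and observing that $\nabla u_\varphi\cdot\nu_\varphi = \tfrac{\partial u_\varphi}{\partial\nu}$ in the second summand produces precisely the fourth and fifth lines of \eqref{eq:T2_cartesian}.

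The main obstacle is purely bookkeeping: one must carefully distinguish three sources of $s$-dependence (the solution $u_{\varphi+sw}$, the evaluation point, and the geometric data $\nu$ and the surface factor) and verify that, after multiplying by $-A(s)B(s)v$ and $-\tfrac12 A(s)^2 v$ respectively and setting $s=0$, every cross term appears with the correct sign and placement of $e^\varphi$. An alternative, essentially equivalent route is to mimic word-for-word the argument of \cite[Lemma 3.2]{IacopettiPacellaWeth2022}, simply replacing the radial parametrization $x=e^{\varphi(q)}q$ and area element $e^{(N-1)\varphi}\sqrt{1+|\nabla_{\mathbb{S}^{N-1}}\varphi|^2}\,d\sigma$ by the cartesian parametrization $x=(x',e^{\varphi(x')})$ and $\sqrt{1+|e^\varphi\nabla_{\R^{N-1}}\varphi|^2}\,dx'$; the normal $\nu$ given by \eqref{eq:nu} and the vector field $V$ given by \eqref{eq:defvectfield} replace their spherical counterparts throughout. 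Apart from the explicit form of $\nu_\varphi$ and $V$, every step is structurally identical to the cone case, which is why the authors omit the details there; here the proof reduces to transcribing those computations with the new geometric data.
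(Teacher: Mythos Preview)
Your proposal is correct and follows essentially the same route as the paper's proof: start from the expression for $T'(\varphi+sw)[v]$ given by Lemma \ref{lem:T_1_cylinder}, differentiate in $s$ via the Leibniz rule, and then split $\frac{d}{ds}\big|_{s=0}\bigl(\nabla u_{\varphi+sw}(x',e^{\varphi+sw})\cdot\nu_{\varphi+sw}\bigr)$ into the three pieces you describe, with the derivative of $\nu_{\varphi+sw}$ computed exactly as in your displayed formula (which coincides with the paper's \eqref{eq:d_nu_s_ds}). The paper also invokes the identity $\frac{d}{ds}\nabla u_{\varphi+sw}=\nabla\frac{d}{ds}u_{\varphi+sw}$ from \cite[Lemma 3.2]{IacopettiPacellaWeth2022}, which you implicitly use when writing the contribution $\nabla\widetilde u_w\cdot\nu_\varphi$.
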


\begin{proof}
Let $v, w \in C^2(\overline \omega)$. By definition, Lemma \ref{lem:T_1_cylinder} and using the Leibniz rule, we have:
\begin{align}
T''(\varphi)[v, w] 
& = \left. \frac{d}{ds} \right|_{s = 0}\left(- \frac{1}{2} \int_\omega \left(\frac{\partial u_{\varphi + sw}}{\partial \nu}(x^\prime,e^{\varphi + sw})\right)^2 e^{\varphi + sw} v \ dx' \right) \nonumber \\
& = - \int_\omega e^\varphi v \frac{\partial u_\varphi}{\partial \nu} \left. \frac{d}{ds}\right|_{s = 0}\left(\frac{\partial u_{\varphi + sw}}{\partial \nu}(x^\prime,e^{\varphi + sw}) \right)  \ dx' \nonumber \\
&\ \ \ - \frac{1}{2} \int_\omega \left(\frac{\partial u_\varphi}{\partial \nu}(x^\prime,e^{\varphi}) \right)^2 e^\varphi v w \ dx'. \label{eq:second_variation}
\end{align}
To conclude it suffices to compute the derivative in the first integral of the right-hand side of \eqref{eq:second_variation}. To this end we observe that
\begin{align}
\left. \frac{d}{ds}\right|_{s = 0} \left(\frac{\partial u_{\varphi + sw}}{\partial \nu}(x^\prime,e^{\varphi + sw}) \right) 
& = \left. \frac{d}{ds}\right|_{s = 0} \left(\nabla u_{\varphi + sw}(x^\prime,e^{\varphi + sw}) \cdot \nu_{\varphi + sw} \right) \nonumber \\
& = \left. \frac{d}{ds}\right|_{s = 0} (\nabla u_{\varphi + sw}(x', e^{\varphi + sw})) \cdot \nu_\varphi \nonumber \\
& \ \ \ + \nabla u_\varphi (x^\prime, e^\varphi)\cdot \left. \frac{d}{ds}\right|_{s = 0}  \nu_{\varphi + sw} \label{eq:d_normal_dev_ds}
\end{align}
where $\nu_\varphi$ is given by \eqref{eq:nu} and
\begin{equation*}
\nu_{\varphi + sw} = \frac{(- e^{\varphi + sw} \nabla_{\mathbb R^{N - 1}} (\varphi + sw), 1)}{\sqrt{1 + |e^{\varphi + sw} \nabla_{\mathbb R^{N - 1}}(\varphi + sw)|^2}}.
\end{equation*}
Now, for the first term in the right-hand side of \eqref{eq:d_normal_dev_ds}, thanks to the argument presented in \cite[Lemma 3.2]{IacopettiPacellaWeth2022}, we have
\begin{equation*}
\frac{d}{ds} (\nabla u_{\varphi + sw}) = \nabla \left(\frac{d}{ds} u_{\varphi + sw} \right),
\end{equation*}
and thus we obtain
\begin{equation}
\label{eq:d_nabla_u_phi_sw_ds}
\left.\frac{d}{ds}\right|_{s = 0}  (\nabla u_{\varphi + sw} (x', e^{\varphi + sw})) = \nabla \widetilde u_w(x^\prime,e^\varphi) + D^2u_\varphi(x^\prime,e^\varphi) (0^\prime,we^\varphi).
\end{equation}
On the other hand, for the last term in \eqref{eq:d_normal_dev_ds}, we check that
\begin{align}
\left. \frac{d}{ds} \right|_{s = 0} \nu_{\varphi + sw}
& = -  \frac{e^\varphi}{\sqrt{1 + |e^\varphi \nabla_{\mathbb R^{N - 1}} \varphi|^2}} (\nabla_{\mathbb R^{N - 1}} w + w \nabla_{\mathbb R^{N - 1}} \varphi, 0) \nonumber \\
& \quad -  \frac{(e^\varphi)^2 (w |\nabla_{\mathbb R^{N - 1}} \varphi|^2 + \nabla_{\mathbb R^{N - 1}} \varphi \cdot \nabla_{\mathbb R^{N - 1}} w)}{1 + |e^\varphi \nabla_{\mathbb R^{N - 1}} \varphi|^2} \nu_{\varphi} \label{eq:d_nu_s_ds}
\end{align}
Finally, substituting \eqref{eq:d_normal_dev_ds}--\eqref{eq:d_nu_s_ds} into \eqref{eq:second_variation} we obtain \eqref{eq:T2_cartesian}.
\end{proof}

As in Section \ref{sec:cone}, in view of Definition \ref{def:energy_stationary}, we consider a volume constraint. In the case of cartesian graphs, the volume of the domain $\Omega_\varphi$ associated to $\varphi \in C^2(\overline \omega)$ is expressed by
\begin{equation}
\label{eq:volume_cartesian}
\mathcal V(\varphi) = |\Omega_\varphi| = \int_\omega e^\varphi \ dx'.
\end{equation}
The functional $\mathcal V$ is of class $C^2$ and for every $v, w \in C^2(\overline \omega)$ it holds
\begin{equation}
\label{eq:derivatives_V_cartesian}
\mathcal V'(\varphi) [v] = \int_\omega e^\varphi v \ dx', \qquad \mathcal V''(\varphi)[v, w] = \int_\omega e^\varphi v w \ dx'.
\end{equation}

For $c > 0$ we define the manifold
\begin{equation*}
M \coloneqq \left\{\varphi \in C^2(\overline \omega) \ : \ \int_\omega e^\varphi \ dx' = c\right\},
\end{equation*}
whose tangent space at any point $\varphi \in M$ is given by
\begin{equation}
\label{eq:tangent_space_cylinder}
T_\varphi M = \left\{v \in C^2(\overline \omega) \ : \ \int_\omega e^\varphi v \ dx' = 0 \right\}.
\end{equation}

We consider the restricted functional
\begin{equation*}
I(\varphi) = T|_M(\varphi), \quad \varphi \in M.
\end{equation*}

As before, if $\varphi \in M$ is a critical point for $I$, then there exists a Lagrange multiplier $\mu \in \mathbb R$ such that
\begin{equation*}
T'(\varphi) = \mu I'(\varphi).
\end{equation*}
Results analogous to Proposition \ref{prop:Lagrange_multiplier_is_negative} and Lemma \ref{lem:Lagrange_multipliers_second_derivatives} hold with the same proofs. In particular, we point out that for an energy stationary pair $(\Omega_\varphi, u_\varphi)$ under a volume constraint the function $u_\varphi$ has constant normal derivative on $\Gamma_\varphi$. For the reader's convenience, we restate here these results.

\begin{prop}
\label{prop:Lagrange_multiplier_is_negative_cylinder}
Let $\varphi \in M$ and let $(\Omega_\varphi, u_\varphi)$ be energy-stationary under a volume constraint. Then the Lagrange multiplier $\mu$ is negative and
\begin{equation*}
\frac{\partial u_\varphi}{\partial \nu} = - \sqrt{- 2 \mu} \quad \text{ on } \quad \Gamma_\varphi.
\end{equation*} 
\end{prop}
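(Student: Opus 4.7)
The plan is to mimic the argument of Proposition \ref{prop:Lagrange_multiplier_is_negative} (and of \cite[Lemma 4.1]{IacopettiPacellaWeth2022}), adapting it to the cartesian graph parametrization. Since $(\Omega_\varphi, u_\varphi)$ is energy-stationary under the volume constraint and $\varphi \in M$, the Lagrange multiplier rule gives some $\mu \in \mathbb{R}$ with $T'(\varphi) = \mu\, \mathcal{V}'(\varphi)$ on $C^2(\overline{\omega})$.

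First, I would combine the formula \eqref{eq:T_1_cylinder} from Lemma \ref{lem:T_1_cylinder} with the expression \eqref{eq:derivatives_V_cartesian} for $\mathcal{V}'(\varphi)$, obtaining
\begin{equation*}
-\frac{1}{2} \int_\omega \left(\frac{\partial u_\varphi}{\partial \nu}(x', e^{\varphi(x')})\right)^{\!2} v(x')\, e^{\varphi(x')}\, dx' \;=\; \mu \int_\omega e^{\varphi(x')}\, v(x')\, dx'
\end{equation*}
for every $v \in C^2(\overline{\omega})$. Rearranging this as
\begin{equation*}
\int_\omega \left[\, \tfrac{1}{2}\left(\tfrac{\partial u_\varphi}{\partial \nu}\right)^{\!2} + \mu \,\right] e^{\varphi}\, v\, dx' \;=\; 0 \qquad \forall\, v \in C^2(\overline{\omega}),
\end{equation*}
the fundamental lemma of the calculus of variations (applied to the continuous function in brackets, which makes sense since $u_\varphi \in W^{1,\infty}(\Omega_\varphi)$ and $\Gamma_\varphi$ is smooth) yields the pointwise identity $\left(\frac{\partial u_\varphi}{\partial \nu}\right)^{\!2} = -2\mu$ on $\Gamma_\varphi$.

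Next I would use positivity: since $u_\varphi > 0$ in $\Omega_\varphi$ and $u_\varphi = 0$ on $\Gamma_\varphi$, Hopf's boundary point lemma (valid on the smooth relative boundary) forces $\frac{\partial u_\varphi}{\partial \nu} < 0$ on $\Gamma_\varphi$. Therefore the constant $-2\mu$ is strictly positive, so $\mu < 0$, and taking the negative square root gives $\frac{\partial u_\varphi}{\partial \nu} = -\sqrt{-2\mu}$ on $\Gamma_\varphi$, as claimed.

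No serious obstacle is anticipated here: the proof is essentially the same as in the cone setting, the only substantive difference being the form of the surface measure and of $\langle V,\nu\rangle$ for graphs over $\omega$, both of which have already been computed in \eqref{eq:V_scalar_nu} and the proof of Lemma \ref{lem:T_1_cylinder}. The one point that deserves care is the justification of Hopf's lemma at boundary points of $\Gamma_\varphi$ adjacent to $\Gamma_{1,\varphi}$, but this is handled by the hypothesis $u_\varphi \in W^{1,\infty}(\Omega_\varphi)\cap W^{2,2}(\Omega_\varphi)$ together with smoothness of the relative boundary, exactly as in the cone case.
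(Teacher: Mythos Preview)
Your proposal is correct and follows essentially the same approach as the paper, which simply refers to \cite[Lemma 4.1]{IacopettiPacellaWeth2022} (the same reference used for the cone case in Proposition \ref{prop:Lagrange_multiplier_is_negative}); you have just written out explicitly the adaptation to cartesian graphs that the paper leaves implicit.
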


\begin{proof}
The same as in \cite[Lemma 4.1]{IacopettiPacellaWeth2022}
\end{proof}

For the second derivative of $I$ we have
\begin{lemma}
\label{lem:Lagrange_multipliers_second_derivatives_cylinder}
Let $\varphi \in M$ and let $v, w \in T_\varphi M$. If $(\Omega_\varphi, u_\varphi)$ is energy-stationary under a volume constraint, then 
\begin{equation}
\label{eq:Lagrange_multipliers_second_derivatives_cylinder}
I''(\varphi)[v, w] = T''(\varphi)[v, w] - \mu \mathcal V''(\varphi)[v, w].
\end{equation}
\end{lemma}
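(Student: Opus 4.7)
The plan is to reduce the statement to a standard computation for the Hessian of a constrained functional on a codimension-one manifold, patterned after \cite[Lemma 4.3]{IacopettiPacellaWeth2022} in the cone case. The key observation is that although $T$ is defined on an open subset of $C^2(\overline\omega)$, the intrinsic Hessian $I''(\varphi)$ on the constraint manifold $M=\{\mathcal V=c\}$ differs from the ambient Hessian $T''(\varphi)$ by a correction term that records how tangent directions must be curved in order to stay on $M$. At an energy-stationary point, that correction can be rewritten using the Lagrange multiplier relation $T'(\varphi)=\mu\mathcal V'(\varphi)$ established just above Proposition \ref{prop:Lagrange_multiplier_is_negative_cylinder}.

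First I would fix $v\in T_\varphi M$ and build a smooth curve $t\mapsto\gamma(t)\in M$ defined on a small interval around $0$, with $\gamma(0)=\varphi$ and $\gamma'(0)=v$. Since $\mathcal V$ is $C^2$ with $\mathcal V'(\varphi)\neq 0$ (see \eqref{eq:derivatives_V_cartesian}, using $e^\varphi>0$), such a curve is produced by a standard implicit-function-theorem argument: start from the straight segment $\varphi+tv$ and correct by a scalar multiple of a fixed function $\eta\in C^2(\overline\omega)$ with $\mathcal V'(\varphi)[\eta]\neq 0$ so that $\mathcal V(\gamma(t))\equiv c$. Differentiating the identity $\mathcal V(\gamma(t))=c$ twice at $t=0$ yields
\begin{equation}
\label{eq:constraint-twice}
\mathcal V''(\varphi)[v,v]+\mathcal V'(\varphi)[\gamma''(0)]=0.
\end{equation}

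Next I would compute the second derivative of $T$ along $\gamma$ by the chain rule,
\begin{equation}
\label{eq:chain-rule}
\frac{d^2}{dt^2}\bigg|_{t=0}T(\gamma(t))=T''(\varphi)[v,v]+T'(\varphi)[\gamma''(0)],
\end{equation}
and invoke the stationarity condition $T'(\varphi)=\mu\,\mathcal V'(\varphi)$, which, combined with \eqref{eq:constraint-twice}, converts the second summand in \eqref{eq:chain-rule} into $-\mu\,\mathcal V''(\varphi)[v,v]$. Since the left-hand side of \eqref{eq:chain-rule} equals $I''(\varphi)[v,v]$ by the very definition of the intrinsic Hessian on a submanifold at a critical point (and in particular is independent of the choice of curve $\gamma$), this yields the diagonal identity $I''(\varphi)[v,v]=T''(\varphi)[v,v]-\mu\,\mathcal V''(\varphi)[v,v]$. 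Polarization then gives the claimed bilinear identity \eqref{eq:Lagrange_multipliers_second_derivatives_cylinder}.

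The only mildly delicate point, and the one I would be most careful about, is the well-definedness of $I''(\varphi)$ as a bilinear form on $T_\varphi M\times T_\varphi M$, i.e. verifying that the right-hand side of \eqref{eq:chain-rule} does not depend on the particular curve $\gamma$ chosen through $\varphi$ with prescribed velocity $v$. This is precisely where the Lagrange multiplier hypothesis is used: two admissible curves $\gamma_1,\gamma_2$ differ at second order by a vector $\gamma_1''(0)-\gamma_2''(0)$ that lies in $T_\varphi M$ (it is annihilated by $\mathcal V'(\varphi)$ thanks to \eqref{eq:constraint-twice}), and on $T_\varphi M$ the functional $T'(\varphi)=\mu\,\mathcal V'(\varphi)$ vanishes. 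All the remaining steps are routine, and the argument is structurally identical to the one given in \cite[Lemma 4.3]{IacopettiPacellaWeth2022}, so a short proof referring back to that reference is appropriate.
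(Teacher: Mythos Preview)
Your argument is correct and is exactly the standard Lagrange-multiplier Hessian computation underlying \cite[Lemma 4.3]{IacopettiPacellaWeth2022}, which is precisely what the paper invokes; the paper's own proof is simply the one-line reference ``The same as in \cite[Lemma 4.3]{IacopettiPacellaWeth2022}''. So your proposal matches the paper's approach, just with the details of that reference spelled out.
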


\begin{proof}
The same as in \cite[Lemma 4.3]{IacopettiPacellaWeth2022}
\end{proof}

\subsection{The case $\mathbf{\varphi \equiv 0}$ and one-dimensional solutions}

When $\varphi \equiv 0$ (that is, $\Gamma_\varphi = \Gamma_0$ is the intersection of the cylinder with the plane $x_N = 1$), the domain $\Omega_0$ is just the finite cylinder $$\Omega_\omega:=\omega \times (0, 1).$$ 
Then, if $f$ is a locally Lipschitz continuous function, any weak solution of \eqref{eq:pde_cylinder} is also a classical solution up to the boundary, i.e., it belongs to $C^2(\overline \Omega_\omega)$. This follows  by standard regularity theory by considering the boundary conditions and that $\partial \Omega_\omega$ is made by the union of three $(N - 1)$-dimensional manifolds (with boundary) intersecting orthogonally (see also \cite[Proposition 6.1]{PacellaTralli2020}).

In $\Omega_\omega$, for suitable nonlinearities, we can find a solution of \eqref{eq:pde_cylinder} in $\Omega_\omega$ which depends only on $x_N$ in the following way: first, we can apply some variational method to find a solution $u$ of the ordinary differential equation
\begin{equation}
\label{eq:nonlinear_ode_cylinder}
\begin{cases}
-u'' = f(u) \quad \text{ in } (0, 1) \\
u'(0) = u(1) = 0
\end{cases}
\end{equation}
and then set 
$$u_\omega(x', x_N) := u(x_N), \quad \text{$(x', x_N) \in \Omega_\omega$}.$$ 
Recall that, in one dimension, there is no critical Sobolev exponent for the  embedding into $L^p$. So one example of a suitable nonlinearity is $f(u) = u^p$ with $1 < p < \infty$, or those of Proposition \ref{prop:existence_radial_sector} with the only caution that in (iii), for $N\geq 2$ we can take $1 < p < \infty$. 

For our purposes we need to consider one-dimensional solutions $u_\omega$ of \eqref{eq:pde_cylinder} in $\Omega_\omega$ that are nondegenerate, which means that the linearized operator
\begin{equation*}
L_{u_\omega} = - \Delta - f'(u_\omega)
\end{equation*}
does not admit zero as an eigenvalue. In other words, $u_\omega$ is nondegenerate if there are no nontrivial weak solutions $\phi \in H_0^1(\Omega_\omega \cup \Gamma_{1, 0})$ of the problem
\begin{equation}
\label{eq:linearized_problem_cylinder}
\left\{
\begin{array}{rcll}
- \Delta \phi - f'(u_\omega) \phi & = & 0 & \quad \text{ in } \Omega_\omega \\[5pt]
\phi & = & 0 & \quad \text{ on } \Gamma_0 \\[3pt]
\displaystyle \frac{\partial \phi}{\partial \nu} & = & 0 & \quad \text{ on } \Gamma_{1, 0}
\end{array}
\right.
\end{equation}

To analyze the spectrum of $L_{u_\omega}$ it is convenient to consider the following auxiliary one-dimensional eigenvalue problem:
\begin{equation}
\label{eq:linearized_eigenvalue_problem_cylinder}
\begin{cases}
- z'' - f'(u_\omega) z = \alpha z \quad \text{ in } (0, 1) \\
z'(0) = z(1) = 0
\end{cases}
\end{equation}

We denote the eigenvalues of \eqref{eq:linearized_eigenvalue_problem_cylinder} by $\alpha_i$, for $i\in \mathbb{N}$. Clearly, they correspond to the eigenvalues of the linear operator
\begin{equation}
\widehat L_{u_\omega}(z) = - z'' - f'(u_\omega) z
\end{equation}
with the boundary conditions of \eqref{eq:linearized_eigenvalue_problem_cylinder}.

We also consider the following Neumann eigenvalue problem in the domain $\omega \subset \mathbb R^{N - 1}$:
\begin{equation}
\label{eq:Neumann_problem_on_omega}
\left\{
\begin{array}{rcll}
- \Delta_{\R^{N-1}} \psi & = & \lambda \psi & \quad \text{ in } \omega \\[4pt]
\displaystyle \frac{\partial \psi}{\partial \nu_{\partial \omega}} & = & 0 & \quad \text{ on } \partial \omega
\end{array}
\right.
\end{equation}
where $- \Delta_{\R^{N-1}} = - \sum_{i = 1}^{N - 1} \frac{\partial^2}{\partial x_i^2}$ is the Laplacian in $\mathbb R^{N - 1}$, i.e. with respect to the variables $x_1,\ldots,x_{N-1}$. We denote its eigenvalues by
\begin{equation}
0 = \lambda_0(\omega) < \lambda_1(\omega) \leq \lambda_2 (\omega) \leq  \ldots.
\end{equation}
It is well-known that $\lambda_j(\omega) \nearrow + \infty$ as $j \to \infty$ and that the normalized eigenfunctions form a basis $(\psi_j)_j$ of the tangent space $T_0M$ defined in \eqref{eq:tangent_space_cylinder} when $\varphi \equiv 0$.

\begin{lemma}
\label{lem:sum_of_spectra_cylinder}
The spectra of $L_{u_\omega}$, $\widehat L_{u_\omega}$ and $- \Delta_{\R^{N-1}}$ with respect to the above boundary conditions are related by
\begin{equation}
\sigma(L_{u_\omega}) = \sigma(\widehat L_{u_\omega}) + \sigma(- \Delta_{\R^{N-1}}).
\end{equation}
\end{lemma}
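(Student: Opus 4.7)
The plan is to prove both inclusions via separation of variables, exploiting the fact that $u_\omega$ depends only on $x_N$ so that $L_{u_\omega}$ has coefficients constant in the $x'$ variables. Thus $L_{u_\omega}$ splits as $-\Delta_{\mathbb{R}^{N-1}} + \widehat L_{u_\omega}$ acting on $\omega \times (0,1)$, and the decomposition of the boundary $\partial \Omega_\omega = \Gamma_0 \cup \Gamma_{1,0}$ is compatible with this splitting: $\Gamma_0 = \omega \times \{1\}$ sees only the $x_N$-direction (hence Dirichlet, matching $z(1)=0$), while $\Gamma_{1,0} = (\partial\omega \times (0,1)) \cup (\omega \times \{0\})$ carries Neumann conditions that split into the Neumann condition of \eqref{eq:Neumann_problem_on_omega} on $\partial\omega$ and the condition $z'(0)=0$ at the bottom.

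For the inclusion $\sigma(\widehat L_{u_\omega}) + \sigma(-\Delta_{\mathbb{R}^{N-1}}) \subseteq \sigma(L_{u_\omega})$, I would take an eigenpair $(\alpha_i, z_i)$ of \eqref{eq:linearized_eigenvalue_problem_cylinder} and a Neumann eigenpair $(\lambda_j(\omega), \psi_j)$ of \eqref{eq:Neumann_problem_on_omega}, and verify directly that
\begin{equation*}
\phi_{ij}(x', x_N) := \psi_j(x')\, z_i(x_N)
\end{equation*}
belongs to $H_0^1(\Omega_\omega \cup \Gamma_{1,0})$ and is an eigenfunction of $L_{u_\omega}$ with eigenvalue $\alpha_i + \lambda_j(\omega)$. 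Indeed, a direct computation gives $-\Delta \phi_{ij} - f'(u_\omega)\phi_{ij} = (\alpha_i + \lambda_j(\omega))\phi_{ij}$, and the boundary conditions follow from those satisfied by $\psi_j$ and $z_i$.

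For the converse inclusion, I would use that the Neumann eigenfunctions $(\psi_j)_{j \in \mathbb{N}}$ form a complete orthonormal basis of $L^2(\omega)$. Given an eigenfunction $\phi \in H_0^1(\Omega_\omega \cup \Gamma_{1,0})$ of $L_{u_\omega}$ with eigenvalue $\mu$, expand
\begin{equation*}
\phi(x', x_N) = \sum_{j \geq 0} a_j(x_N)\, \psi_j(x'), \quad a_j(x_N) := \int_\omega \phi(x', x_N)\psi_j(x')\, dx'.
\end{equation*}
Multiplying the equation $L_{u_\omega}\phi = \mu\phi$ against $\psi_j$ and integrating over $\omega$, using Green's formula together with the Neumann condition of $\phi$ on $\partial\omega \times (0,1)$ and of $\psi_j$ on $\partial\omega$, yields
\begin{equation*}
-a_j''(x_N) - f'(u_\omega(x_N)) a_j(x_N) + \lambda_j(\omega) a_j(x_N) = \mu\, a_j(x_N),
\end{equation*}
i.e., $\widehat L_{u_\omega} a_j = (\mu - \lambda_j(\omega))a_j$. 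The boundary conditions on $\omega \times \{0\}$ and $\Gamma_0$ give $a_j'(0) = 0$ and $a_j(1) = 0$. Since $\phi \not\equiv 0$, at least one $a_j$ is nontrivial, and for such $j$ we conclude $\mu - \lambda_j(\omega) \in \sigma(\widehat L_{u_\omega})$, proving the converse.

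The main obstacle I foresee is not the formal computation, which is routine, but the careful justification of the Fourier expansion step: one must ensure that differentiation in $x_N$ commutes with the series, and that the traces involved in the Neumann and Dirichlet conditions are correctly captured by the coefficients $a_j$. This is resolved by working first in a weak formulation against test functions of tensor product form $\eta(x_N)\psi_j(x')$ with $\eta \in C^\infty([0,1])$ vanishing at $x_N=1$, which avoids any need to differentiate the series term by term, and then invoking elliptic regularity for the one-dimensional problem \eqref{eq:linearized_eigenvalue_problem_cylinder} to upgrade each $a_j$ to a classical solution.
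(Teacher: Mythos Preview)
Your proposal is correct and follows essentially the same approach as the paper: tensor products $\psi_j z_i$ for the inclusion $\sigma(\widehat L_{u_\omega}) + \sigma(-\Delta_{\mathbb{R}^{N-1}}) \subseteq \sigma(L_{u_\omega})$, and integration of an eigenfunction $\phi$ against the Neumann eigenfunctions $\psi_j$ over $\omega$ for the converse. The only minor differences are that the paper invokes elliptic regularity on $\overline{\Omega_\omega}$ to make $\phi$ classical and differentiate under the integral directly (rather than your weak formulation with tensor test functions), and that your explicit appeal to completeness of $(\psi_j)_j$ to guarantee some $a_j \not\equiv 0$ is a point the paper leaves implicit.
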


\begin{proof}
We begin by showing that $\sigma(L_{u_\omega}) \subset \sigma(\widehat L_{u_\omega}) + \sigma(- \Delta_{\R^{N-1}})$. Let $\tau \in \sigma(L_{u_\omega})$ and let $\phi \in H_0^1(\Omega_\omega \cup \Gamma_{1,0})$ be an associated eigenfunction, that is, $\phi$ is a weak solution of
\begin{equation}
\label{eq:linearized_problem_cylinder_with_eigenvalue}
\left\{
\begin{array}{rcll}
- \Delta \phi - f'(u_\omega) \phi & = & \tau \phi & \quad \text{ in } \Omega_\omega \\[5pt]
\phi & = & 0 & \quad \text{ on } \Gamma_0 \\[3pt]
\displaystyle \frac{\partial \phi}{\partial \nu} & = & 0 & \quad \text{ on } \Gamma_{1, 0}
\end{array}
\right.
\end{equation}
As observed at the beginning of this subsection for the the nonlinear problem \eqref{eq:pde_cylinder}, by the shape of $\Omega_\omega$ and the boundary conditions, since $f \in C^{1, \alpha}(\mathbb R)$, by standard elliptic regularity, we have that $\phi$ is a classical solution of \eqref{eq:linearized_problem_cylinder_with_eigenvalue} in $\overline \Omega_\omega$.

Let $\lambda$ be an eigenvalue of $- \Delta_{\R^{N-1}}$ with homogeneous Neumann boundary condition on $\omega$ and let $\psi$ be an associated eigenfunction. Define
\begin{equation}
z(x_N) := \int_\omega \phi(x', x_N) \psi(x') \ dx'.
\end{equation}

Then, differentiating with respect to $x_N$, using Green's formulas and the boundary conditions we have
\begin{align}
- z'' 
& = \int_\omega - \frac{\partial^2 \phi}{\partial x_N^2} \psi \ dx' \nonumber \\
& = \int_\omega (- \Delta \phi + \Delta_{\R^{N-1}} \phi) \psi \ dx' \nonumber \\
& = \int_\omega f'(u_\omega) \phi \psi \ dx' + \int_\omega \tau \phi \psi \ dx' + \int_\omega \Delta_{\R^{N-1}} \psi \phi \ dx' \nonumber \\
& = f'(u_\omega) z + \tau z - \lambda z.\nonumber
\end{align}
Thus $(\tau - \lambda) \in \sigma(\widehat L_{u_\omega})$ and hence $\tau = (\tau - \lambda) + \lambda \in \sigma(\widehat L_{u_\omega}) + \sigma(- \Delta_{\R^{N-1}})$.

To show the reverse inclusion, let $\alpha \in \sigma(\widehat L_{u_\omega})$, $\lambda \in \sigma( - \Delta_{\R^{N-1}})$ and let $z, \psi$ be, respectively, the associated eigenfunctions. Setting for  $x = (x', x_N)\in\Omega_\omega$
\begin{equation*}
\phi(x', x_N) := z(x_N) \psi(x'),
\end{equation*}
we note that  
\begin{align}
- \Delta \phi 
& = - z'' \psi - \Delta_{\R^{N-1}} \psi  z \nonumber \\
& = f'(u_\omega) z \psi + \alpha z \psi + \lambda z \psi\\\nonumber 
& = f'(u_\omega) \phi + (\alpha  + \lambda)\phi.
\end{align}
Finally, by construction, we easily check that $\phi$ satisfies the boundary conditions of \eqref{eq:linearized_problem_cylinder_with_eigenvalue}.
As a consequence, we deduce that
\begin{equation*}
\alpha + \lambda \in \sigma(L_{u_\omega})
\end{equation*}
and this concludes the proof.
\end{proof}

\begin{cor}
The problem \eqref{eq:linearized_problem_cylinder} admits zero as an eigenvalue if and only if there exist $i \in \mathbb N^+$ and $j \in \mathbb N$ such that 
\begin{equation*}
\alpha_i + \lambda_j(\omega) = 0
\end{equation*}
holds.
\end{cor}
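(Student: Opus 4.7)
The statement is an immediate consequence of Lemma \ref{lem:sum_of_spectra_cylinder}, so the plan is essentially to unwind the definitions and invoke the lemma. By construction, zero is an eigenvalue of problem \eqref{eq:linearized_problem_cylinder} precisely when $0 \in \sigma(L_{u_\omega})$, i.e.\ when there is a nontrivial weak solution $\phi \in H^1_0(\Omega_\omega \cup \Gamma_{1,0})$ of the linearized problem with the prescribed boundary conditions.

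The plan is then to apply Lemma \ref{lem:sum_of_spectra_cylinder}, which gives the set-theoretic identity
\begin{equation*}
\sigma(L_{u_\omega}) = \sigma(\widehat L_{u_\omega}) + \sigma(-\Delta_{\R^{N-1}}).
\end{equation*}
Consequently, $0 \in \sigma(L_{u_\omega})$ if and only if there exist $\alpha \in \sigma(\widehat L_{u_\omega})$ and $\lambda \in \sigma(-\Delta_{\R^{N-1}})$ with $\alpha + \lambda = 0$. Since the eigenvalues of $\widehat L_{u_\omega}$ under the boundary conditions of \eqref{eq:linearized_eigenvalue_problem_cylinder} are enumerated as $\{\alpha_i\}_{i \in \mathbb{N}^+}$ and the Neumann eigenvalues of $-\Delta_{\R^{N-1}}$ on $\omega$ as $\{\lambda_j(\omega)\}_{j \in \mathbb{N}}$, the existence of such a pair translates exactly into the condition $\alpha_i + \lambda_j(\omega) = 0$ for some indices $i \in \mathbb{N}^+$ and $j \in \mathbb{N}$.

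There is no real obstacle here since all the analytic content (separation of variables, passage between the classical and weak formulations, elliptic regularity needed to treat the corner behaviour of $\partial\Omega_\omega$) has already been absorbed into the proof of Lemma \ref{lem:sum_of_spectra_cylinder}. The only minor point worth mentioning explicitly is the direction "$\Leftarrow$": given $\alpha_i$ and $\lambda_j(\omega)$ summing to zero, one builds an explicit eigenfunction $\phi(x',x_N) = z(x_N)\psi(x')$ from the corresponding one-dimensional and $(N-1)$-dimensional eigenfunctions, and checks that $\phi \not\equiv 0$ and lies in $H^1_0(\Omega_\omega \cup \Gamma_{1,0})$, which is immediate from $z(1)=0$ and the Neumann condition on $\partial\omega$. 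The converse is the content of the first half of the lemma's proof, via the integration against $\psi$ that produced $z$. Thus the corollary is a direct restatement of Lemma \ref{lem:sum_of_spectra_cylinder} in the particular case where the eigenvalue in question is zero.
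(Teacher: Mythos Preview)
Your proof is correct and takes essentially the same approach as the paper, which simply states that the corollary follows immediately from Lemma \ref{lem:sum_of_spectra_cylinder}. You have merely spelled out in more detail why the set identity $\sigma(L_{u_\omega}) = \sigma(\widehat L_{u_\omega}) + \sigma(-\Delta_{\R^{N-1}})$ translates into the stated condition on the indices.
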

\begin{proof}
It follows immediately from Lemma \ref{lem:sum_of_spectra_cylinder}.
\end{proof}

\begin{cor}
A one-dimensional solution of \eqref{eq:pde_cylinder} is nondegenerate if both the following conditions are satisfied:
\begin{enumerate}
\item the eigenvalue problem \eqref{eq:linearized_eigenvalue_problem_cylinder} in $(0, 1)$ does not admit zero as an eigenvalue;\\[-3mm]
\item $\lambda_1(\omega) > - \alpha_1$.
\end{enumerate}
\end{cor}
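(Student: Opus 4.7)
The plan is to mimic the strategy used for Corollary \ref{cor:nondegeneracy_cone} in the cone case, exploiting the spectral decomposition provided by the immediately preceding Lemma \ref{lem:sum_of_spectra_cylinder} together with its corollary. The goal is to show that, under conditions \textit{(1)} and \textit{(2)}, zero cannot belong to $\sigma(L_{u_\omega})$, so that $u_\omega$ is nondegenerate by definition.

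First, by the corollary to Lemma \ref{lem:sum_of_spectra_cylinder}, zero is an eigenvalue of $L_{u_\omega}$ if and only if there exist $i \in \mathbb N^+$ and $j \in \mathbb N$ with $\alpha_i + \lambda_j(\omega) = 0$. I would then split the argument according to whether $j = 0$ or $j \geq 1$. In the case $j = 0$, since $\lambda_0(\omega) = 0$, the relation $\alpha_i + \lambda_0(\omega) = 0$ forces $\alpha_i = 0$ for some $i \geq 1$, which is ruled out precisely by hypothesis \textit{(1)}. In other words, \textit{(1)} eliminates the possibility that $u_\omega$ is degenerate through a one-dimensional eigenfunction of $\widehat L_{u_\omega}$.

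In the case $j \geq 1$, I would use that $\alpha_1$ is the smallest eigenvalue of \eqref{eq:linearized_eigenvalue_problem_cylinder} and $\lambda_1(\omega)$ the smallest positive Neumann eigenvalue of $-\Delta_{\mathbb R^{N-1}}$ on $\omega$. Hence for all $i \geq 1$ and all $j \geq 1$ one has
\begin{equation*}
\alpha_i + \lambda_j(\omega) \geq \alpha_1 + \lambda_1(\omega) > 0,
\end{equation*}
where the strict inequality is exactly the content of hypothesis \textit{(2)}. Consequently, $\alpha_i + \lambda_j(\omega)$ can never vanish, and combining this with the previous case we conclude that $0 \notin \sigma(L_{u_\omega})$, proving the nondegeneracy of $u_\omega$.

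I do not anticipate a genuine obstacle here: essentially all the heavy lifting is carried out in Lemma \ref{lem:sum_of_spectra_cylinder}, where the spectrum of $L_{u_\omega}$ is identified with the Minkowski sum of the two simpler spectra. The only point worth double-checking is that $\alpha_1$ is indeed the bottom of $\sigma(\widehat L_{u_\omega})$ under the mixed Neumann–Dirichlet conditions of \eqref{eq:linearized_eigenvalue_problem_cylinder}; this is standard Sturm–Liouville theory on $(0,1)$ and requires no further argument.
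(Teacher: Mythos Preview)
Your proposal is correct and follows essentially the same approach as the paper, which simply states that the proof is analogous to that of Corollary~\ref{cor:nondegeneracy_cone}. Your case split $j=0$ versus $j\geq 1$, together with the monotonicity $\alpha_i\geq\alpha_1$ and $\lambda_j(\omega)\geq\lambda_1(\omega)$, reproduces exactly the argument used there.
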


\begin{proof}
Analogous to the proof of Corollary \ref{cor:nondegeneracy_cone}.
\end{proof}

\subsection{Stability/instability of the pair $\mathbf{(\Omega_\omega, \textit{u}_\omega)}$}

In this subsection, we prove a general stability/instability theorem for the pair $(\Omega_\omega, u_\omega)$. We begin with some preliminary results.

Firstly, we recall that when $\varphi \equiv 0$ the tangent space $T_0M$ is given by
\begin{equation}
T_0M = \left\{v \in C^2(\overline \omega) \ : \ \int_\omega v \ dx' = 0 \right\}.
\end{equation}
Since $u_\omega$ depends on $x_N$ only, in order to simplify the notations, we denote with a prime the derivative with respect to $x_N$, and thus we write
\begin{equation*}
u_\omega^\prime(x_N)=u_\omega^\prime(x^\prime,x_N) \coloneqq \frac{\partial u_\omega}{\partial x_N}(x', x_N).
\end{equation*}
Then, for $v \in T_0 M$, we have that the function $\widetilde u$ (see \eqref{eq:equation_for_u_tilde_cartesian_graphs}), which belongs to $H^1(\Omega_\omega)$, is a weak solution of   
\begin{equation}
\label{eq:equation_for_u_tilde_phi_zero}
\left\{
\begin{array}{rcll}
- \Delta \widetilde u & = & f'(u_\omega) \widetilde u & \quad \text{ in } \Omega_\omega \\ [5pt]
\widetilde u & = & - u_\omega'(1) v & \quad \text{ on } \Gamma_0 \\ [3pt]
\displaystyle \frac{\partial \widetilde u}{\partial \nu} & = & 0 & \quad \text{ on } \Gamma_{1, 0}
\end{array}
\right.
\end{equation}

As before, by elliptic regularity we know that $\widetilde u$ is regular in $\overline \Omega_\omega$, and thus it is a classical solution. We also note that, by the nondegeneracy of $u_\omega$, there exists a unique solution of \eqref{eq:equation_for_u_tilde_phi_zero}.

\begin{lemma}
\label{thm:exists_h}
Let $\lambda_j > 0$ be any positive eigenvalue for the Neumann problem \eqref{eq:Neumann_problem_on_omega} and let $\psi_j$ be any normalized eigenfunction associated to $\lambda_j$. Let $\widetilde u_j \in H^1(\Omega_\omega)$ be the solution of \eqref{eq:equation_for_u_tilde_phi_zero} with $v = \psi_j$. Then the function
\begin{equation}
\label{eq:def_of_h}
h_j(x_N) := \int_\omega \widetilde u_j(x', x_N) \psi_j(x') \ dx', \quad x_N \in (0, 1]
\end{equation}
satisfies
\begin{equation}
\label{eq:equation_for_h_cylinder}
\begin{cases}
- h_j'' - f'(u_\omega)h_j = - \lambda_j h_j \quad \text{ in } \quad (0, 1) \\[2pt]
h_j(1) = - u_\omega'(1) \\[2pt]
h_j'(0) = 0 
\end{cases}
\end{equation}
\end{lemma}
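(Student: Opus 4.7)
The plan is to mimic the analogous computation already carried out in the cone case (Theorem on $h_j$ in Section \ref{sec:cone}), exploiting now the product structure $\Omega_\omega=\omega\times(0,1)$, which makes separation of variables particularly clean. Because $\widetilde u_j$ is, by elliptic regularity, a classical solution of \eqref{eq:equation_for_u_tilde_phi_zero} up to the boundary, we may freely differentiate $h_j$ under the integral sign and integrate by parts on $\omega$.

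First I would verify the boundary conditions. Evaluating at $x_N=1$ and using $\widetilde u_j(x',1)=-u_\omega'(1)\psi_j(x')$ together with $\int_\omega \psi_j^2\,dx'=1$ yields $h_j(1)=-u_\omega'(1)$. For the Neumann condition at $x_N=0$, note that $\omega\times\{0\}\subset \Gamma_{1,0}$ with exterior normal $-e_N$; hence the Neumann condition on $\widetilde u_j$ gives $\partial_{x_N}\widetilde u_j(x',0)=0$, from which $h_j'(0)=\int_\omega \partial_{x_N}\widetilde u_j(x',0)\psi_j(x')\,dx'=0$.

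The main step is the ODE itself. Differentiating twice and using the equation $-\Delta \widetilde u_j=f'(u_\omega)\widetilde u_j$ on $\Omega_\omega$, one writes, for $x_N\in(0,1)$,
\begin{align*}
-h_j''(x_N) &= -\int_\omega \partial_{x_N}^2 \widetilde u_j(x',x_N)\,\psi_j(x')\,dx' \\
&= \int_\omega\bigl(-\Delta \widetilde u_j + \Delta_{\R^{N-1}}\widetilde u_j\bigr)\psi_j\,dx' \\
&= f'(u_\omega(x_N))\,h_j(x_N) + \int_\omega \Delta_{\R^{N-1}}\widetilde u_j\,\psi_j\,dx'.
\end{align*}
The lateral part of $\Gamma_{1,0}$ equals $\partial\omega\times(0,1)$, so $\widetilde u_j$ satisfies a homogeneous Neumann condition with respect to the $x'$-variables on $\partial\omega$; combined with the Neumann condition on $\psi_j$ in \eqref{eq:Neumann_problem_on_omega}, Green's identity on $\omega$ yields
\begin{equation*}
\int_\omega \Delta_{\R^{N-1}}\widetilde u_j\,\psi_j\,dx' = \int_\omega \widetilde u_j\,\Delta_{\R^{N-1}}\psi_j\,dx' = -\lambda_j\int_\omega \widetilde u_j\,\psi_j\,dx' = -\lambda_j h_j(x_N),
\end{equation*}
which combined with the previous identity gives $-h_j''-f'(u_\omega)h_j=-\lambda_j h_j$ as required.

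The proof is essentially routine given the product geometry; the only subtlety is to make sure the Neumann condition is split correctly between its two components (the lateral wall, where it serves the Green identity, and the bottom, where it yields $h_j'(0)=0$), and to invoke the classical regularity of $\widetilde u_j$ so that every interchange of derivatives and integrals is legitimate.
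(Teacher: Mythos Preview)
Your proof is correct and follows essentially the same approach as the paper: differentiate under the integral sign, split $-\partial_{x_N}^2\widetilde u_j=-\Delta\widetilde u_j+\Delta_{\R^{N-1}}\widetilde u_j$, apply Green's identity on $\omega$ using the Neumann conditions on both $\widetilde u_j$ and $\psi_j$, and read off the boundary data for $h_j$ from the boundary conditions on $\widetilde u_j$. Your explicit decomposition of $\Gamma_{1,0}$ into the lateral wall and the bottom face, and the corresponding use of each piece, is a helpful clarification that the paper leaves implicit.
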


\begin{proof}
For simplicity of notation we drop the index $j$ and simply write $\tilde u$, $h$, $\psi$ and $\lambda$ instead of $\tilde u_j$, $h_j, \psi_j$ and $\lambda_j$.

First observe that, as $\tilde u=- u_\omega'(1)\psi$ on $\Gamma_0$, we have
\begin{equation*}
h(1) = \int_\omega - u_\omega'(1) \psi^2 \ dx' = - u_\omega'(1).
\end{equation*}

Now, differentiating with respect to $x_N$ under the integral sign and using Green's formula, taking into account the boundary conditions, we have
\begin{align}
- h'' 
& = \int_\omega - \frac{\partial^2 \widetilde u}{\partial x_N^2} \psi \ dx' = \int_\omega (- \Delta \widetilde u + \Delta_{\R^{N-1}}\widetilde u) \psi \ dx' \nonumber \\
& = \int_\omega f'(u_\omega) \widetilde u \psi \ dx' + \int_\omega \Delta_{\R^{N-1}} \widetilde u \psi \ dx'\nonumber \\
& = f'(u_\omega) h + \int_\omega \widetilde u \Delta_{\R^{N-1}} \psi \ dx' \nonumber \\
& = f'(u_\omega) h - \lambda \int_\omega \widetilde u \psi \ dx'  = f'(u_\omega)h - \lambda h. \nonumber
\end{align}

Finally, exploiting the Neumann condition for $\widetilde u$ on $\Gamma_{1, 0}$, we check that $h'(0) = 0$.
\end{proof}
\begin{remark}
\label{rem:u_tilde_cylinder} Note that for $\widetilde u_j$, $h_j$ as in Lemma \ref{thm:exists_h} we have that
\begin{equation*}
\widetilde u_j (x', x_N) = h_j(x_N) \psi_j(x').
\end{equation*}
Indeed:
\begin{align}
- \Delta (h_j(x_N) \psi_j(x')) 
& = - h_j(x_N) \Delta_{\R^{N-1}} \psi_j(x') - h_j''(x_N) \psi_j(x') \nonumber \\
& = \lambda_j h_j(x_N) \psi_j(x') + f'(u_\omega)h_j(x_N) \psi_j(x') - \lambda_j h_j(x_N) \psi_j(x') \nonumber \\
& = f'(u_\omega) \widetilde u_j.\nonumber
\end{align}
\end{remark}

Moreover, by \eqref{eq:equation_for_h_cylinder} and \eqref{eq:Neumann_problem_on_omega}, the function $h_j \psi_j$ satisfies the boundary conditions in \eqref{eq:equation_for_u_tilde_phi_zero}, so that $h_j \psi_j$ is the unique solution of \eqref{eq:equation_for_u_tilde_phi_zero} and thus coincides with $\widetilde u_j$.

\begin{prop}
\label{prop:h_positive_cylinder}
Let $j \geq 1$, $\lambda_j$ be a positive Neumann eigenvalue of $- \Delta_{\R^{N-1}}$ in $\omega$, and let $h_j$ be the solution of \eqref{eq:equation_for_h_cylinder}. Assume that $-\alpha_1 < \lambda_j$, where $\alpha_1$ is the smallest eigenvalue of \eqref{eq:linearized_eigenvalue_problem_cylinder}.   Then it holds that
\begin{equation*}
h_j > 0 \quad \text{ in } [0, 1].
\end{equation*}
\end{prop}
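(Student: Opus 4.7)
The plan is to adapt the Sturm--Picone comparison strategy of the cone case (Proposition \ref{prop:h_positive_cone}), using the fact that here the Neumann condition $h_j'(0)=0$ must play the role that was played by the vanishing $h_j(0)=0$ coming from the singular structure in the cone setting. I would first record two preliminary facts. Hopf's lemma, applied to the positive one-dimensional solution $u_\omega$ of \eqref{eq:nonlinear_ode_cylinder} with $u_\omega(1)=0$, gives $u_\omega'(1)<0$, so the boundary value $h_j(1)=-u_\omega'(1)$ is strictly positive. Next, let $z_1$ be a first eigenfunction of \eqref{eq:linearized_eigenvalue_problem_cylinder} normalized so that $z_1>0$ on $[0,1)$; this is possible because the first eigenfunction of a regular Sturm--Liouville problem has constant sign, and by ODE uniqueness $z_1'(1)<0$. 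Writing the ODEs for $h_j$ and $z_1$ in the form $y''+qy=0$, the potentials are $q_{h_j}=f'(u_\omega)-\lambda_j$ and $q_{z_1}=f'(u_\omega)+\alpha_1$, so the hypothesis $\alpha_1+\lambda_j>0$ is exactly the strict inequality $q_{z_1}-q_{h_j}>0$ needed for Sturm's comparison.

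Arguing by contradiction, assume $h_j$ vanishes somewhere in $[0,1)$ and set $r:=\sup\{x\in[0,1):h_j(x)=0\}$. Continuity together with $h_j(1)>0$ forces $r\in[0,1)$, $h_j(r)=0$, $h_j>0$ on $(r,1]$, and $h_j'(r)>0$ (otherwise $h_j(r)=h_j'(r)=0$ would yield $h_j\equiv 0$ by ODE uniqueness, contradicting $h_j(1)>0$). The case $r=0$ is excluded in the same way, using $h_j'(0)=0$, so $r\in(0,1)$ and $h_j<0$ in a left neighbourhood of $r$. Two scenarios then remain for the behaviour of $h_j$ on $[0,r)$. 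If $h_j$ has a zero in $[0,r)$, take the largest such $r'\in(0,r)$; then $h_j$ has no zero in $(r',r)$, and Sturm's comparison theorem (applicable thanks to $\alpha_1+\lambda_j>0$) produces a zero of $z_1$ inside $(r',r)\subset(0,1)$, contradicting $z_1>0$ on $[0,1)$. If instead $h_j$ has no zero in $[0,r)$, then $h_j<0$ on all of $[0,r)$, and I would bring in the Wronskian
\[
W(x):=h_j(x)z_1'(x)-h_j'(x)z_1(x).
\]
The two Neumann conditions $h_j'(0)=z_1'(0)=0$ yield $W(0)=0$, while $h_j(r)=0$ together with $h_j'(r)>0$ and $z_1(r)>0$ gives $W(r)=-h_j'(r)z_1(r)<0$. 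A direct differentiation using both ODEs yields
\[
W'(x)=h_j(x)z_1''(x)-h_j''(x)z_1(x)=-(\alpha_1+\lambda_j)\,h_j(x)z_1(x),
\]
which is strictly positive on $[0,r)$ because $h_j<0$, $z_1>0$, and $\alpha_1+\lambda_j>0$. Integrating from $0$ to $r$ would then give $W(r)>0$, contradicting the computation $W(r)<0$.

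The main subtle point is the second scenario: the cone proof exploits the interior zero of $h_j$ at the origin coming from the singular structure, which is no longer available here, so Sturm's comparison alone cannot cover every configuration. The Neumann condition $h_j'(0)=0$ has to be used in a different way, and the observation that $z_1'(0)=0$ as well is precisely what makes $W(0)=0$ and lets the Wronskian identity turn the bound $-\alpha_1<\lambda_j$ into a contradiction.
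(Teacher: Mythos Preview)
Your argument is correct, but the paper's proof takes a rather different and shorter route. Instead of a Sturm--Picone/Wronskian argument, the paper extends $h_j$ by even reflection to $(-1,1)$, obtaining a solution of $-h_j''-f'(u_\omega)h_j+\lambda_j h_j=0$ on $(-1,1)$ with strictly positive Dirichlet data $h_j(\pm 1)=-u_\omega'(1)>0$. Since $u_\omega$ is even, the first Dirichlet eigenvalue of $-\partial_{x_N}^2-f'(u_\omega)$ on $(-1,1)$ coincides with $\alpha_1$, so the first eigenvalue of the full operator is $\alpha_1+\lambda_j>0$; the maximum principle then gives $h_j>0$ directly.

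The reflection trick is slicker because it converts the mixed boundary conditions into a pure Dirichlet problem and reduces everything to the standard fact ``first eigenvalue positive $\Rightarrow$ maximum principle.'' Your approach, on the other hand, stays on $[0,1]$ and mirrors the cone proof (Proposition~\ref{prop:h_positive_cone}) more closely; the key new ingredient is the observation that both $h_j'(0)=0$ and $z_1'(0)=0$, which makes the Wronskian vanish at the left endpoint and lets you run the comparison without an interior zero there. This is a legitimate variant of Sturm comparison adapted to a Neumann endpoint, and it has the virtue of being entirely self-contained, not relying on the eigenvalue characterization of the maximum principle.
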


\begin{proof}
We can reflect $h_j$ by evenness with respect to $0$ to have a solution of the linear problem
\begin{equation}\label{eq:ODEhj}
\begin{cases}
- h_j'' - f'(u_\omega) h_j + \lambda_j h_j = 0 \quad \text{ in } (-1, 1) \\[2pt]
h_j(-1) = h_j(1) = -u_\omega'(1) > 0.
\end{cases}
\end{equation}

By reflection and \eqref{eq:linearized_eigenvalue_problem_cylinder}, the first eigenvalue of the linear operator $$z'' - f'(u_\omega)z \quad \text{in $(0, 1)$}$$  with the boundary condition $z(-1) = z(1) = 0$ is exactly $\alpha_1$. Therefore the first eigenvalue of the linear operator $$\widetilde L_{u_\omega}{g} = - g''- f'(u_\omega)g + \lambda_j g$$ with zero boundary condition in $(-1, 1)$ is $\beta_1 = \alpha_1 + \lambda_j$.

It is well-known that $\widetilde L_{u_\omega}$ satisfies the maximum principle whenever $\beta_1 > 0$, i.e., when $\lambda_j > - \alpha_1$. Therefore, by \eqref{eq:ODEhj}, the function $h_j$ satisfies $h_j \geq 0$ in $(-1,1)$, and by the strong maximum principle we conclude that $h_j > 0$ in $(-1, 1)$.
\end{proof}

We can now state and prove the main result of this section.

\begin{theorem}\label{thm:generalinststab}
Let $\omega \subset \mathbb R^{N - 1}$ be a smooth bounded domain. Let $f\in C^{1, \alpha}_{loc}(\R)$ such that there exists a positive one-dimensional non-degenerate solution $u_\omega$ of \eqref{eq:pde} in $\Omega_\omega$, and let $h_1$ be the solution to \eqref{eq:equation_for_h_cylinder} with $j=1$. Let $\lambda_1=\lambda_1(\omega)$ be the first non-trivial eigenvalue of $-\Delta_{\R^{N-1}}$ with homogeneous Neumann conditions, let $\alpha_1$ be the first-eigenvalue of \eqref{eq:nonsingular_eigenvalue_problem_intro} and let $\rho$ be the number defined by
\begin{equation}\label{eq:teoassumpt2}
\rho \coloneqq -  f(u_\omega(0))h_1(0) -\lambda_1\int_0^1 h_1u_\omega^\prime\, dx_N .
\end{equation}
Assume that $\lambda_1 > - \alpha_1$. Then
\begin{itemize}
\item[(i)]  if $\rho<0$, then $(\Omega_\omega, u_\omega)$ is an unstable energy-stationary pair;
\item[(ii)] if $\rho>0$, then $(\Omega_\omega, u_\omega)$ is  a stable energy stationary pair.
\end{itemize}
\end{theorem}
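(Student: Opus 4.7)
My strategy parallels the proof of Theorem \ref{thm:stability_cone}: first I would reduce $I''(0)[v,v]$ to an explicit expression involving the auxiliary function $\widetilde u_v$, then extract the sharp bound by testing on the Neumann eigenfunctions $\psi_j$ of $-\Delta_{\R^{N-1}}$ in $\omega$ and applying a Wronskian identity that identifies $I''(0)[\psi_1,\psi_1]$ with $\rho$.

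First I specialize Lemma \ref{lem:T2_cartesian} to $\varphi\equiv 0$. Since $\nabla_{\R^{N-1}}\varphi\equiv 0$ and $\nabla u_\omega(x',1)=(0',u_\omega'(1))$, the fourth and fifth integrals in \eqref{eq:T2_cartesian} vanish, while $(D^2 u_\omega(x',1)(0',1))\cdot\nu = u_\omega''(1)$. Combining this with Lemma \ref{lem:Lagrange_multipliers_second_derivatives_cylinder} and the relation $\mu = -(u_\omega'(1))^2/2$ from Proposition \ref{prop:Lagrange_multiplier_is_negative_cylinder}, the two terms proportional to $(u_\omega'(1))^2\|v\|_{L^2(\omega)}^2$ cancel, yielding for every $v\in T_0 M$
\begin{equation*}
I''(0)[v,v] = -u_\omega'(1)\left(\int_\omega v\,\frac{\partial\widetilde u_v}{\partial\nu}(x',1)\,dx' + u_\omega''(1)\int_\omega v^2\,dx'\right).
\end{equation*}

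Testing on the $L^2$-normalized eigenfunction $\psi_1$ associated with $\lambda_1$, Remark \ref{rem:u_tilde_cylinder} gives $\widetilde u_{\psi_1}(x',x_N)=h_1(x_N)\psi_1(x')$, hence $I''(0)[\psi_1,\psi_1]=-u_\omega'(1)(h_1'(1)+u_\omega''(1))$. To identify this with $\rho$, I introduce the Wronskian $W=h_1 u_\omega''-h_1' u_\omega'$. Using $h_1''=-f'(u_\omega)h_1+\lambda_1 h_1$ from \eqref{eq:equation_for_h_cylinder} and $u_\omega'''=-f'(u_\omega)u_\omega'$ (obtained by differentiating $-u_\omega''=f(u_\omega)$), a direct computation yields $W'=-\lambda_1 h_1 u_\omega'$. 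Integrating from $0$ to $1$ and inserting the boundary data $h_1(1)=-u_\omega'(1)$, $h_1'(0)=u_\omega'(0)=0$, $u_\omega''(0)=-f(u_\omega(0))$ gives
\begin{equation*}
-u_\omega'(1)\bigl(h_1'(1)+u_\omega''(1)\bigr) = -f(u_\omega(0))h_1(0) - \lambda_1\int_0^1 h_1 u_\omega'\,dx_N = \rho,
\end{equation*}
so that $I''(0)[\psi_1,\psi_1]=\rho$. Statement \textit{(i)} follows at once: if $\rho<0$ the pair is unstable along the admissible direction $\psi_1$ (which lies in $T_0M$ since $\int_\omega\psi_1\,dx'=0$).

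For \textit{(ii)}, I expand an arbitrary $v\in T_0M$ in the orthonormal Neumann basis $(\psi_j)_{j\geq 0}$ of $L^2(\omega)$; the zero-mean constraint annihilates the constant mode, so $v=\sum_{j\geq 1}(v,\psi_j)\psi_j$. By linearity and uniqueness of \eqref{eq:equation_for_u_tilde_phi_zero} together with Remark \ref{rem:u_tilde_cylinder}, $\widetilde u_v=\sum_{j\geq 1}(v,\psi_j)h_j\psi_j$, and orthogonality gives
\begin{equation*}
I''(0)[v,v] = -u_\omega'(1)\Bigl(\sum_{j\geq 1}(v,\psi_j)^2\,h_j'(1) + u_\omega''(1)\,\|v\|_{L^2(\omega)}^2\Bigr).
\end{equation*}
The same Wronskian trick applied to the pair $(h_1,h_j)$ produces $u_\omega'(1)(h_1'(1)-h_j'(1))=(\lambda_j-\lambda_1)\int_0^1 h_1 h_j\,dx_N$. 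Since $\lambda_j\geq\lambda_1>-\alpha_1$, Proposition \ref{prop:h_positive_cylinder} ensures $h_1,h_j>0$ on $[0,1]$; combined with $u_\omega'(1)<0$, this forces $h_j'(1)\geq h_1'(1)$ for all $j\geq 1$. Therefore $I''(0)[v,v]\geq -u_\omega'(1)(h_1'(1)+u_\omega''(1))\|v\|_{L^2(\omega)}^2 = \rho\|v\|_{L^2(\omega)}^2>0$, proving stability. The main technical obstacle I anticipate is the careful bookkeeping of the boundary contributions in the Wronskian identity at $x_N=0$: the term $u_\omega''(0)=-f(u_\omega(0))$ is exactly what produces the $f(u_\omega(0))h_1(0)$ summand in $\rho$, and getting this sign right is essential since it is the mechanism by which the nonlinearity enters the stability criterion beyond the spectral contribution $-\lambda_1\int h_1 u_\omega'$.
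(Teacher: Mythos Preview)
Your proof is correct and follows essentially the same approach as the paper: reduce $I''(0)[v,v]$ to the expression $-u_\omega'(1)\bigl(\int_\omega v\,\partial_\nu\widetilde u_v + u_\omega''(1)\|v\|_{L^2}^2\bigr)$, identify $I''(0)[\psi_1,\psi_1]$ with $\rho$ via an integration-by-parts (Wronskian) identity between $h_1$ and $u_\omega'$, and then use the cross-multiplication identity for $(h_1,h_j)$ together with Proposition~\ref{prop:h_positive_cylinder} to show $h_j'(1)\geq h_1'(1)$ and conclude. The only cosmetic difference is that the paper writes $u_\omega''(1)=-f(0)$ explicitly and carries the $f(0)$ term through the computation, whereas you keep it as $u_\omega''(1)$ and absorb it into the Wronskian boundary term at $x_N=1$; the two are identical.
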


\begin{proof}
We first observe that since $\frac{\partial u_\omega}{\partial \nu}$ is constant on $\Gamma_0$ then, by the analogous of Proposition \ref{prop:Lagrange_multiplier_is_negative} for cylinders, we infer that the pair $(\Omega_\omega, u_\omega)$ is an energy-stationary pair. 

Let $w \in T_0M$ and assume without loss of generality that $\int_\omega w^2 \ dx' = 1$. In order to prove $(i)$-$(ii)$ we first determine a suitable expression for $I''(0)[w, w]$. To this end, for each $j \in \mathbb N^+$, let $\widetilde u_j$ be the solution of \eqref{eq:equation_for_u_tilde_phi_zero} with $v = \psi_j$ and let $h_j$ be the solution of \eqref{eq:equation_for_h_cylinder}. Then we can write
\begin{equation*}
w = \sum_{j = 1}^\infty (w, \psi_j) \psi_j
\end{equation*}
where $(\cdot, \cdot)$ is the inner product in $L^2(\omega)$. Moreover, we can check that
\begin{equation*}
\widetilde u = \sum_{j = 1}^\infty (w, \psi_j) \widetilde u_j
\end{equation*}
is the solution of \eqref{eq:equation_for_u_tilde_phi_zero} corresponding to $w$. Then, taking $\varphi=0$ in Lemma \ref{lem:T2_cartesian}, exploiting Lemma \ref{lem:Lagrange_multipliers_second_derivatives_cylinder}, taking into account that by Proposition \ref{prop:Lagrange_multiplier_is_negative_cylinder} the Lagrange multiplier $\mu$ is given by
\begin{equation*}
\mu = - \frac{1}{2}(u_\omega'(1))^2,
\end{equation*}
by Remark \ref{rem:u_tilde_cylinder} and observing that $\nabla u_\omega \perp (\nabla_{\mathbb R^{N - 1}} w, 0)$, we infer that

\begin{align}
I''(0)[w, w]
& = - \frac{1}{2} \int_\omega (u_\omega'(1))^2 w^2 \ dx' \nonumber \\
& \quad -  \int_\omega u_\omega'(1) \left(\sum_{j = 1}^\infty (w, \psi_j) h_j'(1) \psi_j \right) \left(\sum_{k = 1}^\infty (w, \psi_k) \psi_k\right) \ dx' \nonumber \\
& \quad - \int_\omega u_\omega'(1) u_\omega''(1) w^2 \ dx' + \frac{1}{2} (u_\omega'(1))^2 \int_\omega w^2 \ dx' \nonumber \\
& = - u_\omega'(1) \int_\omega \left(\sum_{j = 1}^\infty (w, \psi_j)^2 h_j'(1) \psi_j^2 \ \right) dx' - u_\omega'(1) u_\omega''(1) \nonumber
\end{align}
Finally, since $u_\omega$ is a solution to \eqref{eq:nonlinear_ode_cylinder} we deduce that
\begin{equation}\label{eq:eq:Isecondzerobis}
I''(0)[w, w]=- u_\omega'(1) \int_\omega \left(\sum_{j = 1}^\infty (w, \psi_j)^2 h_j'(1) \psi_j^2 \ \right) dx' + u_\omega'(1) f(0).
\end{equation}
In particular, choosing $w=\psi_1$ and plugging it into \eqref{eq:eq:Isecondzerobis} we infer that
\begin{equation}\label{eq:expressionIsecondpsi1}
I''(0)[\psi_1, \psi_1]=- u_\omega'(1) h_1'(1) + u_\omega'(1) f(0).
\end{equation}
Multiplying the equation in \eqref{eq:equation_for_h_cylinder} (with $j=1$) by $u^\prime_\omega$ and integrating by parts we get
$$-(h_1^\prime u_\omega^\prime)\big|_0^1 + \int_0^1h_1^\prime u_\omega^{\prime\prime}\, dx_N = \int_0^1(f^\prime(u_\omega)-\lambda_1)h_1u_\omega^\prime\, dx_N.$$
Exploiting \eqref{eq:nonlinear_ode_cylinder}, integrating by parts and taking into account that $h_1(1)=-u_\omega^\prime(1)$ we obtain
\begin{equation}\label{eq:contointparts}
\begin{array}{lll}
&&\displaystyle -h_1^\prime(1)u_\omega^\prime(1) - \int_0^1h_1^\prime f(u_\omega)\, dx_N\\ &=&\displaystyle \int_0^1f^\prime(u_\omega)u_\omega^\prime h_1\, dx_N -\lambda_1\int_0^1  h_1 u_\omega^\prime\, dx_N\\
&=&\displaystyle (f(u_\omega)h_1)\big|_0^1 - \int_0^1 f(u_\omega) h_1'\, dx_N -\lambda_1\int_0^1  h_1 u_\omega^\prime\, dx_N\\
&=&\displaystyle -f(0)u_\omega^\prime(1)-f(u_\omega(0))h_1(0) - \int_0^1 f(u_\omega) h_1'\, dx_N -\lambda_1\int_0^1  h_1 u_\omega^\prime\, dx_N\\
\end{array}
\end{equation}
Hence, we deduce that
\begin{equation}\label{eq:contointparts2}
-h_1^\prime(1)u_\omega^\prime(1) =-f(0)u_\omega^\prime(1)-f(u_\omega(0))h_1(0)  -\lambda_1\int_0^1  h_1 u_\omega^\prime\, dx_N
\end{equation}
In the end, from \eqref{eq:expressionIsecondpsi1}, \eqref{eq:contointparts2} and recalling \eqref{eq:teoassumpt2}, we obtain
$$I''(0)[\psi_1, \psi_1]=-f(u_\omega(0))h_1(0)  -\lambda_1\int_0^1  h_1 u_\omega^\prime\, dx_N=\rho.
 $$
Therefore, if $\rho<0$ then $I''(0)[\psi_1, \psi_1]<0$, i.e., $(\Omega_\omega, u_\omega)$ is an unstable energy-stationary pair, and this proves (i).\\

Let us prove (ii). Let $w \in T_0M$ such that $\int_\omega w^2 \ dx' = 1$.  From \eqref{eq:eq:Isecondzerobis} we know that $I''(0)[w, w]=- u_\omega'(1) \int_\omega \left(\sum_{j = 1}^\infty (w, \psi_j)^2 h_j'(1) \psi_j^2 \ \right) dx' + u_\omega'(1) f(0)$. Thanks to the assumption $\lambda_1>-\alpha_1$ the following holds true.\\

\textbf{Claim:} if $k > j$, then 
\begin{equation}\label{eq:claimsect4}
h_k'(1) {\geq} h_j'(1),
\end{equation}
and actually $h_k'(1) > h_j'(1)$ if $\lambda_k > \lambda_j$.\\

Indeed, by definition $h_k$, $h_j$ satisfy, respectively, the following:
\begin{align}
& - h_k'' - f'(u_\omega) h_k = - \lambda_k h_k, \label{eq:forhk} \\
& - h_j'' - f'(u_\omega) h_j = - \lambda_j h_j. \label{eq:forhj}
\end{align}
Multiplying \eqref{eq:forhk} by $h_j$ and integrating on $(0, 1)$ we obtain
\begin{align}
\int_0^1 - h_k'' h_j \, d x_N 
& = \int_0^1 h_k' h_j' \, d x_N - (h_k' h_j)\big|_0^1 \nonumber \\
& = \int_0^1 f'(u_\omega) h_j h_k \, d x_N - \lambda_k \int_0^1 h_j h_k \, d x_N \label{eq:int_eq_for_h_k_times_h_j}
\end{align}
Similarly, multiplying \eqref{eq:forhj} by $h_k$, integrating on $(0, 1)$ and then subtracting the result from \eqref{eq:int_eq_for_h_k_times_h_j}, we obtain
\begin{equation}
\label{eq:h_k_prime_minus_h_j_prime}
 - (h_k' h_j - h_j' h_k)(1) = (\lambda_j - \lambda_k) \int_0^1 h_j h_k \, dx_N\ {\leq}\ 0, 
\end{equation}
because $h_j > 0$ and $h_k > 0$ (see Proposition \ref{prop:h_positive_cylinder}, which holds true for any $j\in\mathbb{N}^+$ because $\lambda_1>-\alpha_1$). Now, since $h_j(1)=h_k(1)=-u_\omega(1)$, then by \eqref{eq:h_k_prime_minus_h_j_prime} we deduce that
$$u_\omega'(1)(h_k'(1) - h_j'(1))\leq 0.$$
Hence, as $u_\omega'(1)<0$, Claim \eqref{eq:claimsect4} easily follows.\\

Now, thanks to \eqref{eq:eq:Isecondzerobis} and Claim \eqref{eq:claimsect4}, recalling again that $u'_\omega(1) < 0$ and exploiting \eqref{eq:contointparts2} it follows that
\begin{equation}\label{eq:teogeneral}
\begin{array}{lll}
\displaystyle I''(0)[w, w]&\geq&\displaystyle- u_\omega'(1)h_1^\prime(1)\int_\omega  \left(\sum_{j = 1}^\infty (w, \psi_j)^2\psi_j^2 \ \right) dx' + u_\omega'(1) f(0)\\[6mm]
&=&\displaystyle- u_\omega'(1)h_1^\prime(1) + u_\omega'(1) f(0)\\[2mm]
&=&\displaystyle -f(u_\omega(0))h_1(0)  -\lambda_1\int_0^1  h_1 u_\omega^\prime\, dx_N=\rho.
\end{array}
\end{equation}
Hence, if $\rho>0$ we have that $ I''(0)[w, w]>0$ for all $w \in T_0M$, i.e., $(\Omega_\omega, u_\omega)$ is a stable energy-stationary pair, and this proves (ii). The proof is complete.
\end{proof}

As a simple corollary of Theorem \ref{thm:generalinststab} we can now prove the stability/instability result of Theorem \ref{thm:sharp_stability_for_torsion}, which concerns the case of the torsional energy, i.e. when $f \equiv 1$.

\subsection*{Proof of Theorem \ref{thm:sharp_stability_for_torsion}}
When $f\equiv 1$ the eigenvalue problem \eqref{eq:linearized_eigenvalue_problem_cylinder} has only positive eigenvalues and therefore the condition $\lambda_1 > - \alpha_1$ is automatically satisfied. The only solution of 
\begin{equation}
\label{eq:pdecylinder}
\left\{
\begin{array}{rcll}
- \Delta u & = & 1 & \quad \text{ in } \Omega_\omega \\[4pt]
u & = & 0 & \quad \text{ on } \Gamma_0 \\[2pt]
\displaystyle \frac{\partial u}{\partial \nu} & = & 0 & \quad \text{ on } \Gamma_{1, 0}
\end{array}
\right.
\end{equation}
is the one-dimensional positive function given by 
\begin{equation}\label{eq:expressionuomegateo12}
u_\omega(x_N)=\frac{1-x_N^2}{2}.
\end{equation}
Clearly, as $u_\omega'(1)=-1$ and $f\equiv 1$, then for any $j\in\mathbb{N}^+$ \eqref{eq:equation_for_h_cylinder} reduces to
\begin{equation*}
\begin{cases}
- h_j''  + \lambda_j h_j = 0 \quad \text{ in } \quad (0, 1) \\[2pt]
h_j(1) = - u_\omega'(1) \\[2pt]
h_j'(0) = 0 
\end{cases}
\end{equation*}
whose unique solution is given by
$$h_j(x_N)=\frac{1}{\cosh(\sqrt{\lambda_j})}\cosh(\sqrt{\lambda_j} x_N).$$
In particular, taking $j=1$ and exploiting \eqref{eq:expressionuomegateo12} we can compute explicitly the number $\rho$ in \eqref{eq:teoassumpt2}, namely
$$\rho=-\frac{1}{\cosh(\sqrt{\lambda_1})} +\frac{\lambda_1}{\cosh(\sqrt{\lambda_1})}\int_0^1 \cosh(\sqrt{\lambda_1} x_N) x_N\, dx_N .
$$
Integrating by parts we readily check that
$$\int_0^1 \cosh(\sqrt{\lambda_1} x_N) x_N\, dx_N=\frac{\sinh(\sqrt{\lambda_1})}{\sqrt{\lambda_1}} -\frac{\cosh(\sqrt{\lambda_1})}{\lambda_1}+\frac{1}{\lambda_1},$$
and thus we obtain
\begin{equation}\label{eq:Psiteo12}
\rho = \sqrt{\lambda_1} \tanh(\sqrt{\lambda_1})-1.
\end{equation}
Let us consider the function $g:[0,+\infty[ \to \R$, defined by $g(t)= \sqrt{t} \tanh(\sqrt{t})-1$. Clearly $g(0)=-1$ and $g(t)\to +\infty$ as $t\to +\infty$ and by monotonicity we infer that $g$ has a unique zero in $]0,+\infty[$. We denote it by $\beta$ and from the previous argument and \eqref{eq:Psiteo12} we infer that $\rho<0$ if and only if $\lambda_1<\beta$. Then, by Theorem \ref{thm:generalinststab}-(i) we get that  $(\Omega_\omega, u_\omega)$ is an unstable energy-stationary pair, and this proves (i). 

Analogously, as $\rho>0$ if and only if $\lambda_1>\beta$, from Theorem \ref{thm:generalinststab}-(ii) we obtain that $(\Omega_\omega, u_\omega)$ is a stable energy-stationary pair. The proof is complete.
\qed \\

We conclude this section with the proof of Theorem \ref{thm:stability_semilinear_cylinder}.
\subsection*{Proof of Theorem \ref{thm:stability_semilinear_cylinder}} 
Let $w \in T_0M$ such that $\int_\omega w^2 \ dx' = 1$. Since $\lambda_1>-\alpha_1$, we can argue as in the proof of Theorem \ref{thm:generalinststab}-(ii), in particular, from the first two lines of \eqref{eq:teogeneral}, taking into account that, by assumption, $f(0)=0$, we have
\begin{equation}\label{eq:teo13Isecond}
I''(0)[w, w]\geq\displaystyle- u_\omega'(1)h_1^\prime(1).
\end{equation}
Now, since $h_1'' = (\lambda_1- f'(u_\omega)) h_1$ in $(0,1)$ and $h_1>0$ in $[0,1]$ by Proposition \ref{prop:h_positive_cylinder}, then, thanks to the assumption $\lambda_1>\sup_{x_N\in(0,1)}|f^\prime(u_\omega(x_N))|$ we infer that
$h_1''>0$ in $[0,1]$. In particular, as $h_1^\prime(0)=0$ we deduce that 
\begin{equation}\label{eq:teo13hprime}
h_1^\prime(1)>0.
\end{equation}
Finally, combining \eqref{eq:teo13Isecond} and \eqref{eq:teo13hprime} we obtain that $I''(0)[w, w]>0$ for all $w \in T_0M$, which means that $(\Omega_\omega, u_\omega)$ is a stable energy-stationary pair.
\qed

\begin{remark}
We notice that, if $f$ is a non-negative monotone increasing function, as in the case of the Lane-Emden nonlinearity \eqref{eq:LaneEmden}, then by the Gidas-Ni-Nirenberg theorem (\cite{GidasNiNirenberg1979}) and by the monotonicity of $f$ we infer that $\sup_{x_N\in(0,1)}|f^\prime(u_\omega(x_N))|=f^\prime(u_\omega(0))$. Thus the stability condition of Theorem \ref{thm:stability_semilinear_cylinder} reduces to
$$\lambda_1>f^\prime(u_\omega(0)).$$
\end{remark}

\begin{remark}
\label{rem:remark_on_numerics}
In the case of the Lane-Emden nonlinearity $f(u) = u^p$, at least for some integer values of $p$, it is possible to compute the solution $u_\omega$ numerically, as well as the eigenvalue $\alpha_1$ and the function $h_1$ for different values of $\lambda_1(\omega)$. This allows to compute $\rho$ numerically, so that, plotting the result for $\rho$ as a function of $\lambda_1(\omega)$, we obtain a region of instability for $\lambda_1(\omega)$ close to $- \alpha_1$.
\end{remark}

\section*{\small Acknowledgements}

\small We would like to thank David Ruiz for several useful discussions and Tobias Weth for pointing out a flaw in an early draft of the paper.

\bibliographystyle{acm}
\bibliography{ref_math}

\end{document}